\def\elsartstyle{%
    \def\normalsize{\@setfontsize\normalsize\@xiipt{14.5}}
    \def\small{\@setfontsize\small\@xipt{13.6}}
    \let\footnotesize=\small
    \def\large{\@setfontsize\large\@xivpt{18}}
    \def\Large{\@setfontsize\Large\@xviipt{22}}
    \skip\@mpfootins = 18\p@ \@plus 2\p@
    \normalsize
} \@ifundefined{square}{}{}
\newtheorem{theorem}{Theorem}[section]
\newtheorem{definition}{Definition}[section]
\newtheorem{lemma}[theorem]{Lemma}
\newtheorem{rem}{Remark}[section]
\newtheorem{expl}[theorem]{Example}
\newtheorem{cor}[theorem]{Corollary}
\makeatletter \@addtoreset{equation}{section}
\newcommand{\E}{\mathbb{E}}
\newcommand{\PP}{\mathbb{P}}
\newcommand{\RR}{\mathbb{R}}
\def\nn{\nonumber}
\def\F{{\cal F}}
\def\la{\label}\def\be{\begin{equation}}  \def\ee{\end{equation}}
\journal{Journal of \LaTeX\ Templates}
\begin{document}
\begin{frontmatter}
\title{ Wolbachia invasion to wild mosquito population in
stochastic environment}

\author[a]{Yuanping Cui}
\ead{cuiyp058@nenu.edu.cn}
\author[b]{Xiaoyue Li\corref{mycorrespondingauthor}}
\cortext[mycorrespondingauthor]{Corresponding author}
\ead{lixy@tiangong.edu.cn}
\author[c]{Xuerong Mao}
\ead{x.mao@strath.ac.uk}
\author[d]{Hongfu Yang}
\ead{yanghf783@nenu.edu.cn}
\address[a]{School of Mathematics and Statistics,
Northeast Normal University, Changchun, 130024, China. }
\address[b]{School of Mathematical Sciences,
Tiangong  University, {Tianjin},  300387, China.}
\address[c]{Department of Mathematics and Statistics,
University of Strathclyde, Glasgow G1 1XH, U.K.}
\address[d]{School of Mathematics and Statistics,
Guangxi Normal University, Guangxi, 541004, China.}





\begin{abstract}
Releasing sterile Wolbachia-infected mosquitoes to invade wild mosquito population is a method of mosquito control. In this paper, a stochastic mosquito population model with Wolbachia invasion  perturbed
by environmental fluctuation is studied. Firstly, well-posedness, positivity and Markov-Feller property of
solution  for this model are proved.
Then a group of sharp threshold-type conditions is provided to characterize the long-term behavior of the model, which pinpoints the almost necessary and sufficient conditions for persistence and extinction of Wolbachia-infected and uninfected mosquito populations. Especially, our results indicates that even the initial Wolbachia-infection
frequency is low, the Wolbachia invasion into wild mosquito population can be promoted by stochastic environmental fluctuations. Finally, some numerical
experiments  are carried out to support our  theoretical results.
\end{abstract}

\begin{keyword}
 { Mosquito population model; Wolbachia; Stochastic environment; Permanence; Extinction;  Stationary distribution.}
\end{keyword}

\end{frontmatter}


\section{Introduction}\label{s-w}
Mosquito-borne diseases (MBDs), such as dengue, Zika, yellow fever, have caused a serious threat to
human health worldwide \cite{Calisher, Kyle}.
Due to the lack of a vaccine or effective therapeutic drugs to combat
 these  MBDs, an effective way to  prevent the spread of MBDs is eliminating the main vector Aedes mosquito.  The traditional control measure  is spraying insecticides \cite{Hemingway}. {However, it only has a short-term effect due to
the growing mosquito resistance to insecticides \cite{Hemingway,Somwang} and
results in a severe environmental damage \cite{Zhang2020}.   An innovative and sustainable
 mosquito control strategy comes from  Wolbachia bacteria which has several  peculiarities \cite{Ong}. This bacteria is maternally inherited;   Wolbachia-infected mosquitoes could block virus transmission; {uninfected}
females are unable to produce offspring with infected males due to cytoplasmic incompatibility, which inhibits the growth of wild mosquito population, see \cite{Field,JH} for more references. Therefore, the reduction of wild mosquitoes can be achieved by releasing the artificially  bred infected mosquitoes  to invade wild mosquito population \cite{F, Hoff, 2011, Bzheng}. }This emerging method  has been  successfully tested in some countries, such as  China,  Australia, the United States, and so on \cite{Ong,  Iturbe,  Rasic, Zhang2020}.
\par
Mathematical models of mosquito population with Wolbachia invasion and their dynamical behaviors help us to understand better how  infected and uninfected mosquitoes evolve
and interact. In 1959, Caspari and
Watson \cite{Cas}  first proposed a discrete  mathematical model to analyze the effect of
cytoplasmic incompatibility on the dynamics of mosquito population. To get closer to reality, more and more mathematical models on Wolbachia spread in mosquito population have been constructed and studied, including  discrete time models \cite{Shi, Zheng}, continuous time  models \cite{Yu2021, Qu, Zhang2020}, delayed  models \cite{Huang, Bo2014}. Recently, Hu et al. \cite{Yu2021} proposed a  mosquito population model with Wolbachia invasion
\begin{equation}\label{e3}
\begin{cases}
\displaystyle \mathrm{d}I(t)=I(t)\big[b_{I}-\delta_I-d_I(I(t)+U(t))\big]\mathrm{d}t,\\
\displaystyle \mathrm{d}U(t)=U(t)\Big[\displaystyle{\frac{b_{U}U(t)}{ I(t)+U(t) }}-\delta_{U}-d_U(I(t)+U(t))\Big]\mathrm{d}t.
\end{cases}
\end{equation}
{Here} $I(t)$ and $U(t)$
denote the numbers of infected  and uninfected mosquitoes at time $t$, {respectively;} $b_I , \delta_I$ and $d_I$ denote the total numbers of
offspring per unit of time and per infected mosquito, the density-independent decay rate
and density-dependent decay rate of  infected mosquito, respectively; Similarly, $b_U, \delta_U$ and $d_U$ represent the three corresponding parameters of uninfected mosquitoes.
\par As a mater of fact, the evolution of mosquitoes is highly linked to various kinds of environmental conditions \cite{Couret,HM}. Due to  severe
disturbances caused by sudden changes of climate conditions, such as the temperature and rainfall seasonal variation, the parameters of model vary with environment changes instead of keeping constant. Therefore,  a few model of differential equations in which the environmental conditions switch
randomly between multiple regimes are  developed, we refer readers to \cite{Hu2015,  Hu2019, HuJDE,  Jia} for more recent studies on randomly switched mosquito population model.
\par However, there are also some continuous fluctuations in a stochastic environment. Many factors embedded in  ecosystems such as
temperature, diet, density, nutrient availability and  water continuously fluctuate with stochastic environment. As a result, it is inevitable that the parameters of the system undergo random variations over time which gives them a stochastic character to some extent \cite{Otero,P}. For a better understanding of Wolbachia spread in mosquito population, we have to take into account continuous fluctuation of stochastic environment in modeling. In addition, Jansen et al. \cite{Jansen} studied stochastic spread of Wolbachia and  pointed out that stochastic effect may promote the Wolbachia spread for a low  initial  frequency of infection.

 {In fact,} the mosquito population is inevitably disturbed by random environment fluctuations. Recall that
the parameters  $\delta_I$ and $\delta_U$ represent density-independent decay rates of $I$ and $U$, respectively, which are sensitive to  the random environmental factors  including temperature, rainfall and so on. In practice
we usually estimate them by   average values plus  error terms obeying normal distributions due to the central limit theorem,  namely,
$$
-\delta_I\mathrm{d} t\rightarrow -\delta_I\mathrm{d} t+\sigma_{I}\mathrm{d}B_1(t),~~-\delta_U\mathrm{d} t\rightarrow-\delta_U\mathrm{d} t+\sigma_{U}\mathrm{d}B_2(t),
$$
where $B_{i}(t),i=1,2$ are standard Brownian motions and independent, and $\sigma_{I}$ and $\sigma_{U}$ denote the intensity of white noises. Thus, the stochastic mosquito population  model with Wolbachia invasion
 is  described by the   stochastic differential equation (SDE)
\begin{equation}\label{e2}
\begin{cases}
\mathrm{d}I(t)=I(t)\big[b_{I}-\delta_I-d_I(I(t)+U(t))\big]\mathrm{d}t+\sigma_II(t)\mathrm{d}B_1(t),\\
\mathrm{d}U(t)=U(t)\Big[\displaystyle{\frac{b_{U}U(t)}{I(t)+U(t)}}-\delta_{U}-d_U\big(I(t)+U(t)\big)\Big]\mathrm{d}t+\sigma_UU(t)\mathrm{d}B_2(t)
\end{cases}
\end{equation}
with initial value~$I(0)=I_{0}\geq 0 $,  $U(0)=U_{0} \geq 0$. Based on the actual background of this model, we assume that $\sigma_{I}$, $\sigma_{U}$ and all other parameters are non-negative. Obviously,  model (\ref{e2}) degenerates into (\ref{e3}) if  $\sigma_{I}=\sigma_{U}=0$.
\par This paper is devoted to investigating the stochastic mosquito population model \eqref{e2} with Wolbachia invasion, which describes the Wolbachia spread in mosquito population in stochastic environment.
    By stochastic Lyapunov analysis,  the well-posedness and positivity as well as the Markov-Feller property of the solution of model \eqref{e2} are studied.
   A group of sharp threshold-type conditions is obtained
to characterize the  dynamical behaviors of  model \eqref{e2}, including the persistence and extinction  as well as stationary distribution.
 Four numerical examples are provided to illustrate our main results.  Our simulations suggest that  the  dynamical behaviors of stochastic model \eqref{e2} could be completely different from those of deterministic  model \eqref{e3}.
  A surprising phenomena  is revealed that environment noises could promote a successful  Wolbachia  invasion into wild mosquito population even for a low initial infection frequency, which implies that environment noises can not be neglected.
In application, the threshold-type conditions enable us to understand the impact  of environmental noises on Wolbachia spread in wild mosquito population and  provide some inspirations for controlling wild mosquito population by releasing  infected mosquitoes.

Compared with the results of deterministic model in \cite{Yu2021}, the dynamical behaviors of  stochastic
model \eqref{e2} { may be} completely different, and environment noise intensities are  crucial  factors for a successful Wolbachia invasion, see  Examples \ref{exam_yhf5.1}-\ref{exam_yhf5.4}.
Specifically,  although for a high initial infection frequency, our theory reveals that a large noise intensity of infected mosquito population may cause itself extinction. On the other hand,  even for a low initial infection frequency, proper noise intensities could drive a successful Wolbachia invasion. These facts also exactly  verify the theory on  stochastic Wolbachia spread  discussed in \cite{Jansen}. Thus, the fact is revealed that the
continuous fluctuations of stochastic environments can not be ignored.

\par The rest of this paper is arranged as follows. Section \ref{sec2} gives the existence and uniqueness of the global positive solution and the Markov-Feller property. Section \ref{sec3} explores the dynamical behaviors of stochastic mosquito population model \eqref{e2}. Section \ref{sec-c4} provides the almost necessary and sufficient threshold-type conditions to characterize the dynamics of stochastic mosquito population model \eqref{e2}, including  persistence and extinction as well as stationary distribution.  Section \ref{sec4} presents a couple of examples and numerical simulations to illustrate our theoretical results.

\section{Global positive solution}\la{sec2}
Throughout this paper, let $(\Omega, \F, \{\F_t\}_{t\geq 0},\PP)$ be a complete filtered probability space with $\{\F_t\}_{t\geq 0}$ satisfying the usual conditions (that is, it is right continuous and $\F_0$ contains all $\PP$-null sets). {$\mathbb{P}_{x}$ and $\mathbb{E}_{x}$ denote the probability and expectation corresponding to the initial value $x$ which may be a vector in $\RR^{2}_{+}$ or a constant in $\RR_{+}$, respectively}. Let $|\cdot|$ denote  the Euclidean norm in $\RR^2$.
 We denote by $\RR_+ = [0, +\infty)$, $\RR_+^{\circ} = (0, +\infty)$, $\RR^2_+ =\RR_+\times \RR_+$, and $\RR^{2,\circ}_+ = \RR_+^{\circ}\times\RR_+^{\circ}$. For any $a, b\in \mathbb{R}$, define
 $a\vee b =\max\{a,b\}$, and $a\wedge b =\min\{a,b\}$. By $\boldsymbol{\delta}_{x}$  denote the Dirac measure with mass at point $x\in \mathbb{R}^{n}$.  {For a set $\mathbb{D}$, we  denote by $\mathbb{D}^c$ its complement, and let $\mathbb{I}_{\{x\in\mathbb{D}\}}=1$ if $x\in \mathbb{D}$ and $0$ otherwise.} We say that a set $\mathbb{D}$ is invariant for the process $(I(t),U(t))$  if $\mathbb{P}_{(I_0,U_0)}\big((I(t),U(t)) \in \mathbb{D}\big) = 1 $ for any $t> 0$ and $(I_0,U_0)\in \mathbb{D}$. {In addition,  for fixed $p>0$, $C_p> 0$, depending on   $p$, is a generic constant which may change from line to line.}
 %
 %

Since $I(t)~\hbox{and}~ U(t)$ represent the numbers of infected and uninfected mosquitoes, both of them should be nonnegative.  The theorem below gives an affirmative answer.
\begin{theorem}\label{th3.1}
For any initial value $(I_0,U_0)\in{\RR^{2}_{+}},$
  there is a unique global solution $(I(t),U(t))$  to \eqref{e2} such that $\mathbb{P}_{(I_0,U_0)}\big((I(t),U(t))\in \mathbb{R}^{2}_{+},~\forall t\geq 0\big)=1$.  Especially, $\mathbb{P}_{(I_0,U_0)}\big((I(t),U(t))\in
  \mathbb{R}^{2, \circ}_{+},~\forall t\geq 0\big)=1$ for any $(I_{0},U_{0})\in{\RR^{2,\circ}_{+}}.$ Furthermore, the solution  $(I(t),U(t))$ is a  Markov-Feller process. 
\end{theorem}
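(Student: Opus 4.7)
The plan is to combine a Khasminskii-type non-explosion argument on $\mathbb{R}^{2,\circ}_+$ with a direct analysis of the coordinate axes, and then to deduce the Markov-Feller property from pathwise uniqueness and continuous dependence on initial data. I first observe that although the ratio $b_U U/(I+U)$ is singular at the origin, it enters the $U$-equation multiplied by $U$, producing the term $b_U U^2/(I+U)$, which is locally Lipschitz on $\mathbb{R}^{2,\circ}_+$ and globally dominated by $b_U U$. Together with the linearity of the diffusion coefficients, this yields by standard strong-solution theory a unique local solution from any initial point in $\mathbb{R}^{2,\circ}_+$, defined up to an explosion time $\tau_\infty = \lim_{k\to\infty}\tau_k$ where
\[
\tau_k := \inf\bigl\{t\ge 0 : I(t)\wedge U(t)\le 1/k \text{ or } I(t)\vee U(t)\ge k\bigr\}.
\]

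For global existence inside $\mathbb{R}^{2,\circ}_+$ I would use the log-barrier Lyapunov test function
\[
V(I,U) = (I-1-\ln I) + (U-1-\ln U),
\]
which is nonnegative and blows up as $(I,U)$ approaches $\partial\mathbb{R}^{2,\circ}_+$ or infinity. Applying It\^o's formula and using the bound $0\le b_U U^2/(I+U) \le b_U U$ together with the stabilising density-dependent terms $-d_I(I+U)(I-1)$ and $-d_U(I+U)(U-1)$ (whose negative quadratic parts dominate all of the positive linear contributions), one gets an estimate of the form $\mathcal{L}V \le K(1+V)$ for a constant $K$ depending only on the parameters. A standard Gronwall argument applied to $\mathbb{E}V(I(t\wedge\tau_k),U(t\wedge\tau_k))$ then yields $\mathbb{P}(\tau_k\le T)\to 0$ as $k\to\infty$ for every $T>0$, so $\tau_\infty=\infty$ almost surely and the trajectory stays in $\mathbb{R}^{2,\circ}_+$. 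For boundary initial data, invariance of each axis is verified explicitly: if $I_0=0$ then $I\equiv 0$ is the unique solution of the first equation and $U$ reduces to the stochastic logistic equation $\mathrm{d}U=U(b_U-\delta_U-d_U U)\mathrm{d}t+\sigma_U U\,\mathrm{d}B_2$, which has a unique global nonnegative solution; the case $U_0=0$ is symmetric, and $(0,0)$ is absorbing. Pathwise uniqueness glues these pictures into the full invariance of $\mathbb{R}^2_+$.

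The Markov property is then immediate from pathwise uniqueness of \eqref{e2}. For the Feller property, the strategy is to show that whenever $x_n\to x$ in $\mathbb{R}^2_+$, the corresponding solutions converge in probability, uniformly on compact time intervals, so that $\mathbb{E}_{x_n} f(I(t),U(t))\to \mathbb{E}_x f(I(t),U(t))$ for every $f\in C_b(\mathbb{R}^2_+)$. Localising by the stopping times $\tau_k$ and applying It\^o's formula to $|I^n-I|^2+|U^n-U|^2$ on $[0,t\wedge\tau_k]$ reduces the task to a uniform bound on $\mathbb{P}(\tau_k^n<T)$, which again follows from the Lyapunov estimate on $V$. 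The main obstacle I expect is exactly the Feller continuity at boundary initial data such as $(0,u_0)$, where the drift fails to be Lipschitz; the cure is to exploit the sublinear bound $b_U U^2/(I+U)\le b_U U$ to control the $U$-increment uniformly in $I\ge 0$, and then identify the limit via the invariance of the $U$-axis and pathwise uniqueness of the one-dimensional stochastic logistic equation obtained in the previous step.
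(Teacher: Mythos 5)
Your treatment of well-posedness and positivity is essentially the paper's own argument: the same decomposition into boundary cases (each axis invariant, the $U$-equation reducing to a stochastic logistic equation, $(0,0)$ absorbing) plus a Khasminskii non-explosion argument on $\RR^{2,\circ}_+$ with the same log-barrier Lyapunov function (yours differs from the paper's $V_1(x,y)=(x+1-\log x)+(y+1-\log y)$ only by an additive constant), the same two-sided stopping times $\tau_k$, the same linear bound $\mathcal LV\le K(1+V)$ exploiting $b_UU^2/(I+U)\le b_UU$, and Gronwall. Where you genuinely depart from the paper is the Feller property. The paper establishes only a one-sided localization (the exit times $\eta_k=\inf\{t: I(t)\vee U(t)\ge k\}$, controlled via $V_2(x,y)=x^2+y^2$, uniformly over initial data in a compact set) and then delegates the Feller conclusion to an external result of Nguyen--Yin--Zhu for diffusions with locally Lipschitz coefficients on $\RR^2_+\setminus\{(0,0)\}$; you instead propose a self-contained continuous-dependence proof via It\^o's formula applied to $|I^n-I|^2+|U^n-U|^2$. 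That buys self-containedness at the price of extra work at the boundary, and you should be aware that the localization you invoke there is the weak point of your sketch: the bound $\mathbb{P}(\tau_k<T)\to0$ coming from the log-barrier function degenerates as the initial point approaches $\partial\RR^2_+$ (since $V(x_n)\to\infty$), so the two-sided stopping times cannot give the required uniformity for $x_n\to(0,u_0)$. Your proposed cure is the right one and does close the gap, but it needs to be made explicit: first show $\sup_{[0,T]}I^n\to0$ in probability by comparison with the logistic (or geometric Brownian) majorant started at $I^n(0)\to0$, and then run Gronwall on the $U$-component alone, using that $(x,y)\mapsto y^2/(x+y)$ has partial derivatives bounded by $1$ on the punctured quadrant and is therefore Lipschitz there uniformly in $I\ge0$, with the $\sup_{[0,T]}I^n$ term entering as a vanishing inhomogeneity. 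With that step written out, your route is a valid and more elementary alternative to the paper's citation.
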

\begin{proof} One observes that $\mathbb{P}_{(0,U_0)}\big(I(t)=0,\forall t\geq0\big)=1$ for any $U_0\geq  0$. Inserting  $I(t)\equiv 0$ for any $t\geq 0$~almost surely (a.s.)~into the second equation of \eqref{e2} yields that
\begin{align*}
\mathrm{d}U(t)=U(t)\big(b_{U}-\delta_{U}-d_UU(t)\big)\mathrm{d}t+\sigma_UU(t)\mathrm{d}B_2(t)~~\mathrm{a.s}.
\end{align*}
Proceeding the  similar argument to \cite[Theorem 2.1]{Li2011} implies that the above equation has a unique strong solution $U(t)\in \RR_+^{\circ} $ for any $t\geq 0,~U_0>0 $. Thus, $\mathbb{P}_{(0,U_0)}\big(I(t)=0,~U(t)>0,~\forall t\geq 0\big)=1$ for any $U_0>0$. Similarly, we can also derive that $\mathbb{P}_{(I_0,0)}\big(I(t)>0,~U(t)=0,~\forall t\geq 0\big)=1$ for any $I_0> 0$, and $\mathbb{P}_{(0,0)}\big(I(t)=0,~U(t)=0,~\forall t\geq 0\big)=1$. Next we focus on the case $(I_0,U_0)\in{\RR^{2,\circ}_+}$.
{ Since the coefficients are local Lipschitz continuous on $\RR^{2,\circ}_{+}$}, by \cite[Theorem 3.3.15, p.91]{Mao2006}, there is a unique
 local solution $(I(t),U(t))$ $(t\in[0,\tau_e))$ with any given initial value $(I_0,U_0)\in{\RR^{2,\circ}_+}$,  {where $\tau_{e}$ is the exit time from $\RR^{2,\circ}_{+}$, namely, it is a stopping time such that
either $\limsup_{t\to\tau_e} (I(t) \vee U(t)) = \infty$ or
$\liminf_{t\to\tau_e} (I(t) \wedge U(t)) = 0$ whenever $\tau_e <\infty$.}
  Choose  a constant $k_{0}\geq 1$ such that~$I_0\in ( {1}/{k_{0}},k_{0})$,  $U_0\in ( {1}/{k_{0}},k_{0})$. For any  $k\geq  k_{0}$, define a stopping time by
\begin{equation*}
\tau_{k}=\inf\left\{t\in[0,\tau_{e}):I(t)\wedge U(t)\leq\frac{1}{k} ~\mbox{ or }~I(t)\vee U(t)\geq k\right\}.
\end{equation*}
For an empty set $\emptyset$, we use the convention~$\inf\emptyset=\infty$. One observes that $\tau_{k}\leq \tau_{e}$  and $\tau_{k}$ is increasing as~$k\rightarrow\infty$. Let $\tau_{\infty}= \lim_{k \rightarrow \infty}\tau_{k}$, clearly,  $\tau_{\infty}\leq\tau_{e} $ a.s. If we can prove that~$\tau_\infty=\infty$ a.s. then $\tau_e =\infty$ a.s.  This  implies that the solution $(I(t),U(t))$ is not only in $\mathbb{R}^{2,\circ}_{+}$ but also global.
Define
$$
V_1(x,y)=(x+1-\log x)+(y+1-\log y), ~~\forall (x,y)\in {\RR^{2,\circ}_+}.
$$
Using the It\^{o} formula yields that
\begin{align}\label{cy1}
\mathbb{E}V_1(I(\tau_{k}\wedge T),U(\tau_{k}\wedge T)) = V_1(I_0,U_0)+\mathbb{E}\displaystyle\int_{0}^{\tau_{k}\wedge T}\mathcal LV_1(I(t),U(t))\mathrm{d}t,
\end{align}
where
\begin{align*}
\mathcal LV_1(x,y):=&(x-1)\big(b_{I}-\delta_{I}-d_I(x+y)\big)+\frac{\sigma_I^{2}}{2}\nn\
\\&~~~+(y-1)\Big(b_{U}\frac{y}{x+y}-\delta_U-d_U(x+y)\Big)+\frac{\sigma_U^2}{2}.
\end{align*}
Using the inequality $u\leq 2(u+1-\log u)$ for any $u>0$, we infer that
\begin{align*}
\mathcal LV_1(x,y)&\leq (b_I+d_I+d_U)x+(b_U+d_U+d_I)y
+\delta_I+\delta_U+\frac{\sigma_I^2}{2}+\frac{\sigma_U^2}{2}\nn\
\\&\leq 2(b_I+d_I+d_U)(x+1-\log x)+2(b_U+d_U+d_I)(y+1-\log y)\nn\
\\&~~~+\delta_I+\delta_U+\frac{\sigma_I^2}{2}+\frac{\sigma_U^2}{2}\nn\
\\&\leq 2v_1V_1(x,y)+ v_2,
\end{align*}
where $v_{1}=(b_I+d_I+d_U)\vee(b_U+d_U+d_I)$,  $v_{2}=\delta_I
+\delta_U+\sigma_I^2/2+\sigma_U^2/2$.
This together with \eqref{cy1} implies that
\begin{align*}
\mathbb{E}V_1(I(\tau_{k}\wedge T),U(\tau_{k}\wedge T))\leq& V_1(I_0,U_0)+v_{2}{T}+2v_{1}\mathbb{E}\displaystyle\int_{0}^{\tau_{k}\wedge T}V_1(I(t),U(t))\mathrm{d}t\nn\\
\leq& 
V_1(I_0,U_0)+v_{2}{T}+2v_{1}\displaystyle\int_{0}^{T}
\mathbb{E}V_1(I(t\wedge\tau_{k}),U(t\wedge\tau_{k}))\mathrm{d}t.
\end{align*}
Applying the Gronwall inequality yields that
\begin{equation}
\mathbb{E}V_1\big(I(\tau_{k}\wedge T),U(\tau_{k}\wedge T)\big) \leq \big(V_1(I_0,U_0)+v_{2}T\big)e^{2v_{1}T}.\nn\
\end{equation}
Since the remaining proof of $\tau_\infty=\infty$ a.s. is standard, see \cite[Theorem 2.1]{Li2019},  we  omit it. 

Then it remains to prove that the  solution $(I(t),U(t))$ is a Markov-Feller process. {Define $V_2(x,y)=x^2+y^{2}$ for any $(x,y)\in \mathbb{R}^{2}_{+}/\{(0,0)\}$. Then we derive that
\begin{align*}
\mathcal{L}V_2(x,y)\leq \big(2(b_I\vee b_U)+\sigma_I^2\vee \sigma_U^2\big)V_2(x,y),~~\forall (x,y)\in {\mathbb{R}^{2}_{+}/\{(0,0)\}},
\end{align*}
where
\begin{align*}
\mathcal LV_2(x,y):=&2x^2\big(b_{I}-\delta_{I}-d_I(x+y)\big)+\sigma_I^{2}x^2+2y^2\Big(b_{U}\frac{y}{x+y}-\delta_U-d_U(x+y)\Big)+\sigma_U^2y^2.
\end{align*}
Let $\eta_{k}=\inf\{t\geq 0: I(t)\vee U(t)\geq k\}.$ Using the $\mathrm{It\hat{o}}$ formula and the Gronwall inequality derives that for any $T>0$ there exists a constant $C_T$ such that
\begin{align*}
\mathbb{E}V_2(I(T\wedge\eta_{k}),U(T\wedge\eta_{k}))\leq (I_0^2+U_0^2)C_T,
\end{align*}
}
which together with the Markov  inequality implies that for any $H>0$ and $(I_0,U_0)\in [0,H]\times [0,H]$
\begin{align*}
\mathbb{P}\big(\eta_{k}\leq T\big)\leq { \mathbb{P}_{(I_0,U_0)}\Big(V_2(I(T\wedge\eta_k),U(T\wedge\tau_k))\geq 2k^2\Big)\leq \frac{ C_TH^2}{k^2}}\rightarrow0
\end{align*}
as $k\rightarrow\infty$. Then for any $H>0$, $\varepsilon>0$ and $T>0$, we can choose $K=K(H,\varepsilon,T)\geq H$ large enough such that
$  { C_T H^2}/{K^2}<\varepsilon,$
which implies that for any $H>0$, $\varepsilon>0$ and $T>0$
\begin{align*}
\mathbb{P}_{(I_0,U_0)}\big(0\leq I(t)\vee U(t)\leq K,\forall 0\leq t\leq T\big)= \mathbb{P}(\eta_{K}> T)>1-\varepsilon.
\end{align*}
{This together with local Lipschitz continuity of coefficients  on $\RR^{2}_{+}/\{(0,0)\}$ as well as \cite[Theorem 5.1]{Nguyen} implies that solution process $(I(t),U(t))$  is a homogeneous Markov-Feller process. The proof is complete.}
\end{proof}

\section{The main results of
long-time dynamical behavior}\la{sec3}
This section is devoted {to} proving the long-time dynamical behaviors of stochastic mosquito population model \eqref{e2}. {The main idea is to study the limits of Lyapunov exponents of $I(t)$ and $U(t)$ by using the properties of boundary equations and  the weak convergence of the random occupation measure of the solution process.} The validity of this method has been verified sufficiently in  \cite{ning2018, Tuong, Evins2015}.
To proceed, consider   \eqref{e2} on the boundaries $U(t)\equiv 0$ and $I(t)\equiv0$, respectively, described by
\begin{align*}
\mathrm{d} \check{I}(t)= \check{I}(t)(b_{I}-\delta_I-d_I \check{I}(t))\mathrm{d}t+\sigma_I \check{I}(t)\mathrm{d}B_1(t)
\end{align*} with  $ \check{I}(0)=I_0$, and
\begin{align*}
\mathrm{d} \check{U}(t)= \check{U}(t)(b_U-\delta_U-d_U \check{U}(t))\mathrm{d}t+\sigma_U \check{U}(t)\mathrm{d}B_2(t)
\end{align*}
with  $ \check{U}(0)=U_0$. Owing to the  comparison theorem
 \cite[Thoerem 1.1, p.352]{N1989} and  the nonnegativity  of $I(t)$ and $U(t)$, one observes that for any $ t\geq 0$,
\begin{align}\label{cyp3.3}
0\leq I(t)\leq  \check{I}(t),~~~~0\leq U(t)\leq  \check{U}(t)~~~~\mathrm{a.s.}
\end{align}
For  convenience, let
\begin{align}\label{a3.9}
 \lambda_\vartheta:=b_{\vartheta}-\delta_\vartheta-\frac{1}{2}\sigma_\vartheta^2,&~
 ~~~q_\vartheta:=\frac{2\lambda_\vartheta}{\sigma_\vartheta^2},~ ~~~\beta_\vartheta:=\frac{2d_\vartheta}{\sigma^2_\vartheta} ,
\end{align} where the subscript  $\vartheta$ identities $I$ or $U$.
  Using the  techniques similar to {\cite[Lemma 2.1]{Bao2016}},  and the result {\rm ${\hbox{{\cite[Proposition 2.1]{Evins2015}}}}$},  we characterize dynamic behaviors of ${\check{I}}(t)$ and $ {\check{U}}(t)$.
\begin{lemma}\label{l1} The following assertions hold.
 \begin{itemize}
 \item[$(1)$]  For any $p>0$,
$
\lim_{t\rightarrow\infty}\mathbb{E}\big[(\check{I}(t))^{p}\big] \leq C_p$ and~$\lim_{t\rightarrow\infty}\mathbb{E}\big[(\check{U}(t))^{p}\big] \leq C_p$.
 \item[$(2)$]If $\lambda_I<0$ $\big(\lambda_U<0\big)$, then $\lim_{t\rightarrow\infty}\check{I}(t)=0$~$\big(\lim_{t\rightarrow\infty}
     \check{U}(t)=0\big)$~~$\mathrm{a.s.}$
 \item[$(3)$]If $\lambda_I>0$~$\big(\lambda_U>0\big)$, then $\check{I}(t)$ $\big(\check{U}(t)\big)$ has a unique stationary distribution $\mu_I$ $\big(\mu_U\big)$ on $\RR_+^{\circ}$, which is Gamma distribution $Ga(q_I,\beta_I)$ $\big(Ga(q_U,\beta_U)\big)$ with density function $f_{q_I,\beta_I}(x  )$ $\big(f_{q_U,\beta_U}(x)\big)$, where
$ f_{q,\beta}(x)=\beta ^{q }x^{q -1}e^{-\beta  x}/\Gamma(q ),~x>0,$
 and   $\Gamma(\cdot)$ represents the Gamma function. Moreover, the probability distribution  $\mathbb{P}_{I_0}(\check{I}(t)\in \cdot)$~$\big(\mathbb{P}_{U_0}(\check{U}(t)\in \cdot)\big)$  converges weakly to $Ga(q_I,\beta_I)$ $\big(Ga(q_U,\beta_U)\big)$ as $t\rightarrow\infty$.
\end{itemize}
\end{lemma}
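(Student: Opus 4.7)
The plan is to treat the two boundary equations separately using standard one-dimensional logistic SDE theory; I will outline everything for $\check{I}(t)$, the argument for $\check{U}(t)$ being identical after swapping subscripts. The three claims can be handled independently with different tools, and the cited references \cite{Bao2016, Evins2015} provide the machinery for the more delicate ergodic analysis.

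For claim (1), I would apply It\^o's formula to $V(x)=x^p$, yielding
\begin{equation*}
\mathcal{L}V(x) = \Bigl[p(b_I-\delta_I) + \tfrac{p(p-1)}{2}\sigma_I^2\Bigr]x^p - p\,d_I\,x^{p+1}.
\end{equation*}
Since the quadratic-in-$x$ negative term eventually dominates, there exist constants $K_p,\theta_p>0$ such that $\mathcal{L}V(x) \le K_p - \theta_p V(x)$ for all $x\ge 0$. After a standard localization with the stopping times from Theorem~\ref{th3.1} and Gronwall's inequality, this gives $\E[(\check{I}(t))^p] \le e^{-\theta_p t}I_0^p + K_p/\theta_p$, from which the $\limsup$ bound follows with $C_p = K_p/\theta_p$.

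For claim (2), I would apply It\^o to $\log\check{I}(t)$ to obtain
\begin{equation*}
\log\check{I}(t) = \log I_0 + \lambda_I t - d_I\int_0^t \check{I}(s)\,\mathrm{d}s + \sigma_I B_1(t).
\end{equation*}
Dropping the nonpositive integral and dividing by $t$, the strong law of large numbers for Brownian motion gives $\limsup_{t\to\infty} t^{-1}\log\check{I}(t) \le \lambda_I < 0$ a.s., hence $\check{I}(t)\to 0$ a.s.\ exponentially fast. For claim (3), I would guess the stationary density from the Fokker--Planck equation $\mathcal{L}^\ast f = 0$ associated with the generator of $\check{I}$. Writing out the one-dimensional equation and integrating once (using the vanishing-flux condition at the origin) yields $f'(x)/f(x) = (q_I-1)/x - \beta_I$, whose normalized solution on $\RR_+^\circ$ is precisely $f_{q_I,\beta_I}$; normalizability forces $q_I > 0$, which is the hypothesis $\lambda_I>0$. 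Uniqueness of the invariant measure and weak convergence of $\PP_{I_0}(\check{I}(t)\in\cdot)$ to $Ga(q_I,\beta_I)$ then follow from the standard boundary classification for scalar Kolmogorov-type SDEs (Feller test via the scale function and speed measure), as packaged in \cite[Lemma 2.1]{Bao2016} and \cite[Proposition 2.1]{Evins2015}.

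The main obstacle is the weak-convergence and uniqueness portion of (3). One has to rule out absorption at the left boundary $0$ and verify that the right boundary is an entrance (or at least inaccessible). The critical threshold is exactly $\lambda_I>0$: when $q_I=2\lambda_I/\sigma_I^2>0$, the speed measure is integrable near $0$ and the scale function blows up, giving positive recurrence on $\RR_+^\circ$. Since the cited references carry out this classification in essentially the same setting, the cleanest route is simply to invoke them once the explicit Gamma density has been identified.
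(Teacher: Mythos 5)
Your proposal is correct and matches the route the paper itself relies on: the paper gives no proof of this lemma beyond citing \cite[Lemma 2.1]{Bao2016} and \cite[Proposition 2.1]{Evins2015}, and those references proceed exactly as you do, via a Lyapunov bound $\mathcal{L}x^p\le K_p-\theta_p x^p$ for the moments, the explicit expression for $\log\check{I}(t)$ plus the strong law of large numbers for extinction when $\lambda_I<0$, and the scale-function/speed-measure (equivalently, stationary Fokker--Planck) analysis identifying the Gamma$(q_I,\beta_I)$ invariant density with $q_I=2\lambda_I/\sigma_I^2>0$ as the normalizability threshold. Your computations (including $f'/f=(q_I-1)/x-\beta_I$ and the boundary classification) check out, so the proposal supplies a correct proof of the statement the paper leaves to the references.
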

\begin{lemma}\label{LC3.2}
If $\lambda_I>0$~$(\lambda_U>0)$, then for any  $p>0$,
\begin{align}\label{l3.16}
\!\lim_{t\rightarrow\infty}\frac{1}{t}\int_{0}^{t}\big(\check{I}(s)\big)^{p}\mathrm{d}s
=\!\int_{\RR_{+}}x^{p}\mu_I(\mathrm{d}x)<\infty~~\Big(\lim_{t\rightarrow\infty}\frac{1}{t}
\int_{0}^{t}\big(\check{U}(s)\big)^{p}
 \mathrm{d}s=\!\int_{\RR_{+}}x^{p}\mu_{U}(\mathrm{d}y)<\infty\Big)~\mathrm{a.s.}
\end{align}
Especially, for $p=1$,
\begin{align}\label{L3.8}
\lim_{t\rightarrow\infty}\frac{1}{t}\int_{0}^{t}\check{I}(s)\mathrm{d}s
=\int_{\RR_{+}}x\mu_I(\mathrm{d}x)=\frac{\lambda_I}{d_I}~~~
\Big(\lim_{t\rightarrow\infty}\frac{1}{t}\int_{0}^{t}\check{U}(s)\mathrm{d}s
=\int_{\RR_{+}}x\mu_{U}(\mathrm{d}y)=\frac{\lambda_U}{d_U}\Big)~~\mathrm{a.s.}
\end{align}
Furthermore,
\begin{align}\label{me3.8}
&\lim_{t\rightarrow\infty}\frac{\ln \check{I}(t)}{t}=0 ~~~
 \Big(\lim_{t\rightarrow\infty}\frac{\ln \check{U}(t)}{t}=0\Big) ~~\mathrm{a.s.}
\end{align}
\end{lemma}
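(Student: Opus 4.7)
My plan is to derive all three assertions from the ergodicity of the scalar boundary diffusions $\check{I}(t)$ and $\check{U}(t)$ already established in Lemma~\ref{l1}, combined with It\^o's formula and explicit moment computations for the Gamma law. Since the $\check{U}$ argument is identical, I will concentrate on $\check{I}$.

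For \eqref{l3.16}, I would invoke the strong law for ergodic Markov processes. Because $\check{I}(t)$ is a nondegenerate one-dimensional diffusion on $\RR_+^\circ$ with unique invariant measure $\mu_I = Ga(q_I,\beta_I)$ (Lemma~\ref{l1}(3)), Birkhoff's ergodic theorem, or equivalently the Has'minskii-type argument of~\cite{Evins2015}, yields
\[
\lim_{t\to\infty}\frac{1}{t}\int_0^t f(\check{I}(s))\,\mathrm{d}s = \int_{\RR_+} f(x)\,\mu_I(\mathrm{d}x) \quad \mathrm{a.s.}
\]
for every $\mu_I$-integrable Borel $f$. Taking $f(x)=x^p$ and using that the Gamma density $f_{q_I,\beta_I}$ has finite moments of every positive order gives \eqref{l3.16}. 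For $p=1$, the explicit identity $\int_{\RR_+} x\,\mu_I(\mathrm{d}x) = q_I/\beta_I = (2\lambda_I/\sigma_I^2)/(2d_I/\sigma_I^2) = \lambda_I/d_I$ produces \eqref{L3.8}.

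For \eqref{me3.8}, I would apply It\^o's formula to $\ln \check{I}(t)$, using $\mathrm{d}\ln \check{I}(t) = (\lambda_I - d_I \check{I}(t))\,\mathrm{d}t + \sigma_I \,\mathrm{d}B_1(t)$, and divide by $t$ to obtain
\[
\frac{\ln \check{I}(t)}{t} = \frac{\ln I_0}{t} + \lambda_I - \frac{d_I}{t}\int_0^t \check{I}(s)\,\mathrm{d}s + \frac{\sigma_I B_1(t)}{t}.
\]
The first term vanishes, the third converges to $\lambda_I$ by \eqref{L3.8}, and the last vanishes by the classical strong law $B_1(t)/t\to 0$ a.s. The limit is therefore $\lambda_I - d_I\cdot(\lambda_I/d_I) = 0$.

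The main obstacle I anticipate is justifying the pathwise ergodic average in step one from an arbitrary deterministic starting point rather than the stationary initial law. This is handled by recurrence of the boundary diffusion on $\RR_+^\circ$ under $\lambda_I>0$ (which already yields the weak convergence $\mathbb{P}_{I_0}(\check{I}(t)\in\cdot)\Rightarrow\mu_I$ in Lemma~\ref{l1}(3)), combined with the moment bound $\lim_{t\to\infty}\mathbb{E}[\check{I}(t)^p]\le C_p$ from Lemma~\ref{l1}(1), which supplies the uniform integrability required to upgrade the ergodic identity from bounded test functions to $f(x)=x^p$.
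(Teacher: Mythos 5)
Your proposal is correct and follows essentially the same route as the paper: the paper likewise invokes the strong ergodicity of $\check{I}(t)$ with invariant Gamma law to evaluate $\lim_{t\to\infty}\frac{1}{t}\int_0^t(\check{I}(s))^p\,\mathrm{d}s=\Gamma(p+q_I)/(\beta_I^p\Gamma(q_I))$, specializes to $p=1$, and then obtains \eqref{me3.8} from It\^o's formula for $\ln\check{I}(t)$ together with $B_1(t)/t\to 0$ a.s. Your closing remark on upgrading the ergodic identity from a deterministic initial point via recurrence and the uniform moment bound of Lemma~\ref{l1}(1) is a reasonable elaboration of what the paper compresses into the phrase ``strong ergodicity.''
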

\begin{proof}
For $\lambda_I>0$, using Lemma \ref{l1} implies that $\check{I}(t)$ has a unique stationary distribution $\mu_I$. Then using the strong ergodicity gives that for any $p>0$,
\begin{align*}
\lim_{t\rightarrow\infty}\frac{1}{t}\int_{0}^{t}(\check{I}(s))^{p}\mathrm{d}s&=\int_{\RR_{+}}x^{p}\mu_I(\mathrm{d}x)=\frac{\beta_I^{q_I}}{\Gamma(q_I)}\int_{\RR_{+}}x^{p+q_I-1}e^{-\beta_I x}\mathrm{d}x
 =\frac{\Gamma(p+q_I)}{\beta_I^{p}\Gamma(q_I)}<\infty~~\mathrm{a.s.}
\end{align*}
Especially, for $p=1$, \eqref{L3.8} holds.
The strong law of large numbers \cite[Theorem 1.6, p.16]{Mao2006} implies
 \begin{align}\label{cypp3.6}
 \lim_{t\rightarrow\infty}\frac{B_1(t)}{t}=0~~\mathrm{a.s.}
 \end{align}
This, together with \eqref{L3.8} and the $\mathrm{It\hat{o}}$ formula, derives  that
\begin{align*}
\lim_{t\rightarrow\infty}\frac{\ln \check{I}(t)}{t}=\lambda_I-d_I\lim_{t\rightarrow\infty}
\frac{1}{t}\int_{0}^{t}\check{I}(s)\mathrm{d}s=0 ~~\mathrm{a.s.}
\end{align*}
By proceeding a similar argument, we can infer the desired results on $\check{U}(t)$. The proof is complete.\end{proof}

Intuitively, to determine  whether   $I(t)$ $\big(U(t)\big)$ converges to zero, we consider the  Lyapunov exponent $\frac{\ln I(t)}{t}$ $\big(\frac{\ln U(t)}{t}\big)$.
 Using the $\mathrm{It\hat{o}}$ formula for the first equation of \eqref{e2} gives that
 \begin{align}\label{cypp3.7}
 \frac{\ln I(t)}{t}=\frac{\ln I_0}{t}+\lambda_I
 -d_{I}\frac{1}{t}\int_{0}^{t}\big(I(s)+U(s)\big)\mathrm{d}s+\frac{\sigma_I B_{1}(t)}{t}~~\mathrm{a.s.}
 \end{align}
Similarly,
utilizing the $\mathrm{It\hat{o}}$ formula for the second equation of \eqref{e2}, 
we obtain
\begin{align}\label{eq_yhf1}
 \frac{\ln U(t)}{t}= & \frac{\ln U_0}{t} +\frac{1}{t}
\int_{0}^{t}\left[\frac{b_UU(s)}{I(s)+U(s)}-d_U \big(I(s)+U(s)\big)\right]\mathrm{d}s -\delta_U-\frac{\sigma_U^2}{2}+ \frac{\sigma_U B_2(t)}{t} .
\end{align}

In what follows, we shall estimate the limits of Lyapunov exponents of $I(t)$ and $U(t)$. For this purpose, define the  random occupation measure of solution process $(I(t),U(t))$ by
 \begin{align*}
\mathbf{\Pi}^{t}(\cdot):=\frac{1}{t}\int_{0}^{t}\mathbb{I}_{\{(I(r),U(r))\in \cdot\}}\mathrm{d}r
\end{align*}
for any $t\geq0$. For any Borel set $\mathbb{D}\subset \RR^{2}_{+}$, $\mathbf{\Pi}^{t}(\mathbb{D})$ is the proportion of time that $(I(s),U(s))$ spends in $\mathbb{D}$ for $0\leq s\leq t$.

\begin{lemma}\label{mL3.6}
  $\{\mathbf{\Pi}^{t}(\cdot)\}_{t\geq0}$ is tight a.s., and any weak limit of $\mathbf{\Pi}^{t}(\cdot)$  is an invariant probability measure of solution process $(I(t),U(t))$ a.s.
\end{lemma}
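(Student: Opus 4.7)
The plan is to split the claim into two parts: (a) almost-sure tightness of the family $\{\mathbf{\Pi}^t\}_{t\geq 0}$, and (b) identifying every subsequential weak limit as an invariant probability measure of $(I(t),U(t))$. Part (a) will rest on a Foster--Lyapunov drift estimate combined with the strong law of large numbers (SLLN) for continuous local martingales, while part (b) is the standard generator/martingale characterization of invariance, which applies because the Markov--Feller property is already in hand from Theorem \ref{th3.1}.

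For (a), I would use the polynomial Lyapunov function $V_p(x,y)=x^p+y^p$ for some $p\geq 1$. A direct It\^o computation, using $b_U y^2/(x+y)\leq b_U y$, yields
\begin{equation*}
\mathcal{L}V_p(x,y) \leq K_1(x^p+y^p) - K_2(x^{p+1}+y^{p+1})
\end{equation*}
for positive constants $K_1,K_2$ depending on the model parameters. Because $x^{p+1}\geq R x^p-R^{p+1}$ and similarly for $y$, choosing $R$ large enough upgrades this to the Foster--Lyapunov inequality $\mathcal{L}V_p\leq M-\alpha V_p$ with positive $\alpha,M$. Applying It\^o's formula to $V_p(I(t),U(t))$ and rearranging gives
\begin{equation*}
\frac{\alpha}{t}\int_0^t V_p(I(s),U(s))\,\mathrm{d}s \leq M + \frac{V_p(I_0,U_0)}{t} + \frac{N_t}{t},
\end{equation*}
where $N_t$ is a continuous local martingale with $\langle N\rangle_t\leq C\int_0^t(I^{2p}+U^{2p})\,\mathrm{d}s$. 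The comparison $I\leq \check{I}$, $U\leq\check{U}$ in \eqref{cyp3.3} together with the uniform moment bound of Lemma \ref{l1}(1) give $\mathbb{E}\langle N\rangle_t\leq C't$, hence the SLLN yields $N_t/t\to 0$ a.s. Therefore $\limsup_{t\to\infty}t^{-1}\int_0^t V_p(I,U)\,\mathrm{d}s\leq M/\alpha$ a.s., and Markov's inequality $\mathbf{\Pi}^t(\{V_p\geq R\})\leq R^{-1}\cdot t^{-1}\int_0^t V_p\,\mathrm{d}s$ gives a.s. tightness.

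For (b), fix any a.s.\ weak subsequential limit $\mathbf{\Pi}^{t_k}\Rightarrow \pi$ as $t_k\uparrow\infty$. For a test function $f\in C_c^2(\mathbb{R}^2_+\setminus\{(0,0)\})$, It\^o's formula reads $f(I(t),U(t))-f(I_0,U_0)=\int_0^t \mathcal{L}f(I,U)\,\mathrm{d}s+M_t^f$, with $\langle M^f\rangle_t\leq C_f t$ by compactness of $\mathrm{supp}\,\nabla f$. Dividing by $t_k$: the LHS is $o(1)$ since $f$ is bounded, $M^f_{t_k}/t_k\to 0$ a.s.\ by the SLLN, and $\mathcal{L}f$ is bounded and continuous, so weak convergence yields $\int\mathcal{L}f\,\mathrm{d}\pi=0$. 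Together with the Feller semigroup from Theorem \ref{th3.1}, this is precisely the generator characterization of invariance used in \cite{Evins2015,Tuong}. The main subtlety will be bookkeeping around the degenerate point $(0,0)$ and the invariant boundary faces $\{I=0\}$, $\{U=0\}$, where the drift term $b_U y/(x+y)$ is not well-defined; since by Theorem \ref{th3.1} these faces are invariant strata, the generator argument is applied stratum by stratum, with the boundary dynamics governed by the ergodic logistic SDEs already analysed in Lemma \ref{l1}.
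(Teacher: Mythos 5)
Your proposal is correct in substance but reaches the tightness part by a genuinely different route than the paper. The paper does not use a Foster--Lyapunov drift inequality at all: it bounds $\limsup_{t\to\infty}t^{-1}\int_0^t(I(s)+U(s))\,\mathrm{d}s$ almost surely by comparing with the one-dimensional boundary diffusions via \eqref{cyp3.3} and then invoking the ergodic averages of Lemma \ref{LC3.2} (when $\lambda_\vartheta>0$) or the a.s.\ decay of $\check I,\check U$ from Lemma \ref{l1}(2) (when $\lambda_\vartheta<0$), after which Markov's inequality on $\mathbf{\Pi}^t$ gives tightness exactly as in your last display of part (a). Your drift argument $\mathcal{L}V_p\leq M-\alpha V_p$ is sound and buys something the paper's proof does not: it avoids the case split on the signs of $\lambda_I,\lambda_U$ (the paper's argument as written only covers $\lambda_I\neq0$, $\lambda_U\neq0$) and yields a.s.\ bounds on time-averages of all polynomial moments, hence uniform integrability beyond mere tightness. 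One step you should tighten: the SLLN for continuous local martingales needs an almost-sure bound $\limsup_t\langle N\rangle_t/t<\infty$ (or $\int_0^\infty(1+s)^{-2}\,\mathrm{d}\langle N\rangle_s<\infty$ a.s.), not merely $\mathbb{E}\langle N\rangle_t\leq C't$; this is easily repaired either through the comparison $I\leq\check I$, $U\leq\check U$ plus Lemma \ref{LC3.2}, or by taking expectations in the $(1+s)^{-2}$ criterion using $\sup_t\mathbb{E}[I^{2p}(t)+U^{2p}(t)]<\infty$. For part (b), the paper simply cites \cite[Proposition 9.1]{Evins2015}; your generator/martingale sketch is the standard argument behind that proposition, but the discontinuity of $(x,y)\mapsto b_Uy/(x+y)$ at the absorbing point $(0,0)$ means the passage from $\int\mathcal{L}f\,\mathrm{d}\pi=0$ (for $f$ supported away from the origin) to invariance of $\pi$ requires first splitting off the atom $\pi(\{(0,0)\})\boldsymbol{\delta}_{(0,0)}$, as you anticipate; that bookkeeping is precisely what the cited result packages, so deferring to it is the cleaner choice.
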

\begin{proof}
If $\lambda_I<0$, by Lemma \ref{l1}, $\lim\limits_{t\rightarrow\infty}\check{I}(t)=0$ a.s., which implies
\begin{align*}
\lim_{t\rightarrow\infty}\frac{1}{t}\int_{0}^{t}\check{I} (s)\mathrm{d}s=0 ~~\mathrm{a.s.}
\end{align*}
This together {with \eqref{cyp3.3} and \eqref{L3.8}} implies that for any $\lambda_I\neq0$,
\begin{align*}
\limsup_{t\rightarrow\infty}\frac{1}{t}\int_{0}^{t} I (s) \mathrm{d}s\leq \lim_{t\rightarrow\infty}\frac{1}{t}\int_{0}^{t}\check{I} (s)\mathrm{d}s= \frac{\lambda_I}{d_I}\vee0~~\mathrm{a.s.}
\end{align*}
Similarly, for any $\lambda_U\neq0$,
\begin{align*}
\limsup_{t\rightarrow\infty}\frac{1}{t}\int_{0}^{t} U(s) \mathrm{d}s\leq\lim_{t\rightarrow\infty}\frac{1}{t}\int_{0}^{t}\check{U} (s)\mathrm{d}s= \frac{\lambda_U}{d_U}\vee 0 ~~\mathrm{a.s.}
\end{align*}
The above inequalities together with the continuity imply that for almost all $\omega$, there exists a positive constant $C:=C(\omega) $ such that for all $t\geq0$,
\begin{align*}
 \frac{1}{t}\int_{0}^{t} \big(I(s)+ U(s)\big)  \mathrm{d}s\leq C .
\end{align*}
Then,  for any $\epsilon>0$ and almost all $\omega\in \Omega$, there exists a positive constant $K:=K(\varepsilon,\omega)$ such that $C/K <\epsilon$. Letting $\mathbb{D}=[0,K]\times [0,K]$, we derive from the above inequality that
 \begin{align*}
 \frac{1}{t}\int_{0}^{t}\mathbb{I}_{\{(I(s),U(s))\in \mathbb{D}^{c}\}}
 \mathrm{d}s\leq \frac{1}{Kt}\int_{0}^{t} \big(I(s)+ U(s)\big) \mathbb{I}_{\{(I(s),U(s))\in \mathbb{D}^{c}\}} \mathrm{d}s \leq \frac{C}{K}<\epsilon,
 \end{align*}
 which implies  that for any $\varepsilon>0$ and $t>0$,
\begin{align*}
 \frac{1}{t}\int_{0}^{t}\mathbb{I}_{\{(I(s),U(s))\in \mathbb{D}\}}
 \mathrm{d}s>1-\epsilon.
 \end{align*}
Therefore, $\{\mathbf{\Pi}^{t}(\cdot)\}_{t\geq0}$ is tight a.s. By \cite[Proposition 9.1]{Evins2015}, any weak-limit of $\{\mathbf{\Pi}^{t}(\cdot)\}_{t\geq 0}$  is an invariant probability measure of  solution process $(I(t),U(t))$ a.s.
\end{proof}
\par
 We begin with proving  the transience of $(I(t),U(t))$ on $\mathbb{R}^{2,\circ}_{+}$, which implies that the process $(I(t),U(t))$ has no invariant probability measure on $\mathbb{R}^{2,\circ}_{+}$. In other words, {the infected and uninfected  mosquito populations don't coexist in stochastic environment in the long term.}
 \begin{lemma}\label{L3.4}
 If $\lambda_I<0$, then $\lim_{t\rightarrow\infty}I(t)=0$ a.s.
 \end{lemma}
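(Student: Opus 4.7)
The plan is very direct: this should follow immediately from the comparison principle that was already established in \eqref{cyp3.3} together with part (2) of Lemma \ref{l1}.

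First I would recall that by \eqref{cyp3.3}, for all $t \geq 0$,
\begin{equation*}
0 \leq I(t) \leq \check{I}(t) \quad \mathrm{a.s.},
\end{equation*}
where $\check{I}(t)$ is the solution of the boundary (logistic-type) SDE with initial condition $\check{I}(0) = I_0$. Next, under the hypothesis $\lambda_I < 0$, Lemma \ref{l1}(2) gives $\lim_{t\to\infty}\check{I}(t) = 0$ a.s. Sandwiching $I(t)$ between $0$ and $\check{I}(t)$ and applying the standard squeeze argument yields $\lim_{t\to\infty} I(t) = 0$ a.s., which is exactly the claim.

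There is essentially no obstacle here: the real work was done upstream in establishing the comparison (via the one-dimensional comparison theorem \cite[Thoerem 1.1, p.352]{N1989} and nonnegativity of solutions) and in the analysis of the boundary equation for $\check{I}(t)$. The lemma is stated as a short, self-contained corollary of those two ingredients, so I would expect the proof in the paper to be only one or two lines long, simply citing \eqref{cyp3.3} and Lemma \ref{l1}(2).
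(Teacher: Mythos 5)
Your proof is correct, but it is not the route the paper takes. You deduce the claim by sandwiching, $0\leq I(t)\leq \check{I}(t)$ a.s.\ from \eqref{cyp3.3}, and then invoking Lemma \ref{l1}(2) for the boundary process $\check{I}(t)$. The paper instead works directly with the Lyapunov exponent: it starts from the It\^o expansion \eqref{cypp3.7}, observes that the drift integral $d_I\frac{1}{t}\int_0^t(I(s)+U(s))\,\mathrm{d}s$ is nonnegative on $\RR^{2,\circ}_+$, and uses the strong law of large numbers \eqref{cypp3.6} to conclude $\limsup_{t\to\infty}\frac{\ln I(t)}{t}\leq \lambda_I<0$, whence $I(t)\to 0$ a.s. The two arguments are comparable in length, but they buy slightly different things. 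The paper's version delivers the stronger conclusion that $I(t)$ decays \emph{exponentially fast} with rate at most $\lambda_I$, and this quantitative bound is reused later (it is exactly \eqref{cyp3.8}, and the estimate $I(t)\leq e^{(\lambda_I+d_U\varepsilon)t}$ derived from it is essential in the proof of Theorem \ref{LL3.11}). Your version only gives convergence to zero, though it has the minor advantage of not requiring $I_0>0$ (the logarithm in \eqref{cypp3.7} forces the paper to restrict to $\RR^{2,\circ}_+$, while the comparison \eqref{cyp3.3} holds on all of $\RR^2_+$). Both ingredients you cite are established upstream exactly as you describe, so there is no gap in your argument.
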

 \begin{proof}
 Due to   $(I(t),U(t))\in \RR^{2,\circ}_+$, it follows from \eqref{cypp3.6} and \eqref{cypp3.7} that
 \begin{align*}
 \limsup_{t\rightarrow\infty}\frac{\ln I(t)}{t}\leq \lambda_I~~\mathrm{a.s.}
 \end{align*}
 which implies the desired assertion.
 \end{proof}
 \begin{lemma}\label{L3}
If $\lambda_I>0$, then for any $\varepsilon>0$ and $H>0$, there exists a constant $\gamma^*>0$ such that
 \begin{align*}
\mathbb{P}_{(I_0,U_0)}\Big(\lim_{t\rightarrow \infty}\frac{\ln U(t)}{t}=\lambda\Big)\geq1-\varepsilon
\end{align*}
for any $(I_0,U_0)\in (0,H]\times(0,\gamma^*]$, where $\lambda= -d_U{\lambda_I}/{d_{I}}-\delta_U-{\sigma_U^2}/{2}.$
\end{lemma}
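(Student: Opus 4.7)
The plan is to start from the It\^o representation \eqref{eq_yhf1} of $\ln U(t)/t$ and to show that on an event of probability at least $1-\varepsilon$ each of its terms converges to the required limit. The Brownian term $\sigma_U B_2(t)/t \to 0$ and the initial term $\ln U_0/t \to 0$ a.s., so the problem reduces to proving
\begin{align*}
\lim_{t\to\infty}\frac{1}{t}\int_{0}^{t}\!\left[\frac{b_U U(s)}{I(s)+U(s)}-d_U\bigl(I(s)+U(s)\bigr)\right]\mathrm{d}s = -\frac{d_U\lambda_I}{d_I}
\end{align*}
on the chosen event, since then the constants $-\delta_U-\sigma_U^2/2$ combine with $-d_U\lambda_I/d_I$ to give exactly~$\lambda$.

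The first key ingredient is a confinement estimate for the $U$-component. For a small $\gamma>0$ to be chosen in terms of $\varepsilon$ and $H$, set $\tau_\gamma := \inf\{t\ge 0 : U(t)\ge\gamma\}$ and establish
\begin{align*}
\mathbb{P}_{(I_0,U_0)}(\tau_\gamma = \infty)\ge 1-\varepsilon,\qquad (I_0,U_0)\in(0,H]\times(0,\gamma^*],
\end{align*}
for $\gamma^*\ll\gamma$ small enough. Heuristically, the drift of $\ln U$ near $U=0$ is essentially $-\delta_U-d_U I-\sigma_U^2/2<0$, so $U$ is repelled from macroscopic levels. A quantitative version uses a Lyapunov function of the form $U^p$ with small $p>0$, exploiting the moment bounds for $I$ inherited from Lemma~\ref{l1} via the comparison \eqref{cyp3.3} to make the expected value of $\mathcal{L}(U^p)$ non-positive on $\{U\le\gamma\}$, followed by a Doob/maximal inequality on the stopped process.

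On $\{\tau_\gamma=\infty\}$ we have $U(s)\le\gamma$ for every $s\ge 0$, so the first equation in \eqref{e2} yields the lower comparison $I(t)\ge\check I_\gamma(t)$, where $\check I_\gamma$ is the boundary-type process with $b_I-\delta_I$ replaced by $b_I-\delta_I-d_I\gamma$; for $\gamma$ small its Lyapunov exponent $\lambda_I-d_I\gamma$ stays positive so that Lemma~\ref{LC3.2} applies. Coupled with the upper bound $I\le\check I$ from \eqref{cyp3.3}, this yields
\begin{align*}
\frac{\lambda_I}{d_I}-\gamma \le \liminf_{t\to\infty}\frac{1}{t}\int_0^t I(s)\,\mathrm{d}s \le \limsup_{t\to\infty}\frac{1}{t}\int_0^t I(s)\,\mathrm{d}s \le \frac{\lambda_I}{d_I},
\end{align*}
while the trivial bound $\tfrac{1}{t}\int_0^t U(s)\mathrm{d}s\le\gamma$ controls the remaining linear term to order $O(\gamma)$.

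The main obstacle is the ratio term $\tfrac{1}{t}\int_0^t \tfrac{b_U U(s)}{I(s)+U(s)}\mathrm{d}s$: although bounded by $b_U$, the integrand fails to vanish when $I$ happens to be small, so a pointwise estimate is not enough. I would split by $\mathbb{I}_{\{I(s)\ge\epsilon\}}$ versus $\mathbb{I}_{\{I(s)<\epsilon\}}$: on the first set $U/(I+U)\le\gamma/\epsilon$, which is tiny for $\gamma\ll\epsilon$; on the second, $I<\epsilon$ forces $\check I_\gamma<\epsilon$, and Lemma~\ref{LC3.2} gives that the time-average of $\mathbb{I}_{\{\check I_\gamma<\epsilon\}}$ converges to the mass the stationary law of $\check I_\gamma$ puts on $[0,\epsilon)$, which tends to $0$ as $\epsilon\downarrow 0$. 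Assembling these estimates yields $|\limsup_t\ln U(t)/t-\lambda|\le C(\gamma+\epsilon+\gamma/\epsilon)$ on $\{\tau_\gamma=\infty\}$; the final step, promoting $\limsup$ to an equality with $\lambda$, comes from applying the argument along a sequence $\gamma_n\downarrow 0$ and shrinking $\gamma^*$ accordingly so that the corresponding confinement events occur simultaneously with probability at least $1-\varepsilon$.
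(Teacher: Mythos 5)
Your overall architecture (confine $U$ below a small level $\gamma$, use that confinement to bound $I$ from below by an auxiliary logistic process, control the ratio term, and then identify the limit of the time averages in \eqref{eq_yhf1}) is the same as the paper's, and your indicator-splitting $\mathbb{I}_{\{I(s)\ge\epsilon\}}$ versus $\mathbb{I}_{\{I(s)<\epsilon\}}$ is a workable alternative to the paper's device of bounding $U/(I+U)$ by $\big(\gamma_0/\hat{I}^{\gamma_0}\big)^{\kappa}$ and invoking ergodicity of the $(-\kappa)$-th moment of the auxiliary Gamma-distributed process. However, there is a genuine gap at the very first and most delicate step: the confinement estimate $\mathbb{P}(\tau_\gamma=\infty)\ge 1-\varepsilon$. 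You propose to obtain it from a Lyapunov function $U^p$ whose generator is ``non-positive on $\{U\le\gamma\}$'' using ``the moment bounds for $I$'' from Lemma \ref{l1} via \eqref{cyp3.3}. This does not work: the drift of $\ln U$ is $\frac{b_U U}{I+U}-\delta_U-d_U(I+U)-\frac{\sigma_U^2}{2}$, and on $\{U\le\gamma\}$ the term $\frac{b_U U}{I+U}$ is close to $b_U$ whenever $I$ is small compared with $U$, so the drift is then close to $\lambda_U$, which has no prescribed sign under the hypothesis $\lambda_I>0$ alone (indeed the lemma is later applied in regimes where $\lambda_U>0$). Hence $\mathcal{L}(U^p)$ is not sign-definite on $\{U\le\gamma\}$, and the bounds $\mathbb{E}[\check{I}(t)^p]\le C_p$ of Lemma \ref{l1}, being \emph{upper} moment bounds obtained from the \emph{upper} comparison \eqref{cyp3.3}, give no control on the time the process spends with $I$ small, which is exactly what is needed.

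The missing idea is the bootstrap that the paper uses to break this circularity: one must run the lower comparison $I(t)\ge\hat{I}^{\gamma_0}(t)$ \emph{up to the stopping time} $\tau=\inf\{t:U(t)\ge\gamma_0\}$, use the ergodic averages of $\hat{I}^{\gamma_0}$ (its first moment and a negative moment, or your indicator of $\{\hat I^{\gamma_0}<\epsilon\}$) to show that on a prepared event of probability $\ge 1-\varepsilon$ the exponential representation \eqref{eq} forces $U(t)\le U_0e^{\lambda_{\gamma_0}t/4}<\gamma_0$ for all $t<\tau$, and then conclude by contradiction that $\tau=\infty$ on that event. In other words, the control of the ratio term cannot be postponed until after confinement is established; it is the mechanism by which confinement is established. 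Your splitting argument for the ratio term would have to be inserted inside this stopping-time argument rather than applied on $\{\tau_\gamma=\infty\}$ after the fact. A secondary, more minor issue is the final passage from the $O(\gamma+\epsilon+\gamma/\epsilon)$ error to the exact limit $\lambda$: iterating over $\gamma_n\downarrow 0$ requires restarting the argument at random times where $U$ has dropped below $\gamma_n^*$ (the fixed initial datum only satisfies $U_0\le\gamma_1^*$), whereas the paper gets the exact value in one pass by proving $U(t)\to 0$ exponentially and then identifying the weak limit of the occupation measure $\mathbf{\Pi}^{t}$ as $\mu_I\times\boldsymbol{\delta}_0$, from which the limit $-d_U\lambda_I/d_I$ of the time average follows by \cite[Lemma 3.1]{Hening2018}.
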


\begin{proof}Since the proof is rather technical,  we divide it into three steps.
\par {\bf \underline{Step 1.}} Prove that $U(t)\rightarrow 0$ as $t\rightarrow 0$ with
 sufficiently large probability.
  Due to $\lambda_I>0$, recalling the definition given by \eqref{a3.9},    one observes that for any fixed $\kappa \in (0,q_{I}/2\wedge 1)$,   
$$ q_I-\kappa-\gamma \beta_I>\frac{q_{I}}{2}-\gamma \beta_I>0, ~~ \forall\gamma \in (0, \lambda_{I}/2d_I).
$$
The continuity of Gamma function $\Gamma(\cdot)$ implies that the function $$\rho_{\kappa}(\gamma):=\frac{\beta_I^\kappa\Gamma(q_I-\kappa-\gamma \beta_I)}{\Gamma(q_I-\gamma \beta_I) } $$ is continuous for $\gamma \in (0, \lambda_{I}/2d_I)$. Let
\begin{align*}
\lambda_{\gamma_0}:= b_U \rho_{\kappa}(\gamma_0) \gamma_0^{\kappa}+d_U\gamma_0
+\lambda<0.
\end{align*}
Then we define an auxiliary process $\hat{I}^{\gamma_0}(t)$ which satisfies
\begin{align}\label{e15}
\mathrm{d}\hat{I}^{\gamma_0}(t)=\hat{I}^{\gamma_0}(t)(b_{I}-\delta_I-d_I\gamma_0-d_I\hat{I}^{\gamma_0}(t))\mathrm{d}t+\sigma_I \hat{I}^{\gamma_0}(t)\mathrm{d}B_1(t)
\end{align}
with  $\hat{I}^{\gamma_0}(0)=I_0>0$. Due to $\lambda_I-d_I\gamma_0>0$, by Lemma \ref{l1} the process $\hat{I}^{\gamma_0}(t)$ has a unique invariant probability measure denoted by {$\mu^{\gamma_0}_{I}(\cdot)$} on $\RR_+^{\circ}$, which is Gamma distribution  $Ga(q_I-\gamma_0 \beta_I,\beta_I)$. Then using the strong ergodicity of $\hat{I}^{\gamma_0}(t)$ yields that
\begin{align}\label{16}
\lim_{t\rightarrow\infty}\frac{1}{t}\int_{0}^{t}
\hat{I}^{\gamma_0}(s)\mathrm{d}s
=\int_{\mathbb{R}_{+}}x\mu_{I}^{\gamma_0}(\mathrm{d}x)
=\frac{\lambda_I}{d_I}-\gamma_0 ~~\mathrm{a.s.}
\end{align}
and
\begin{align}\label{l3.22}
\lim_{t\rightarrow\infty}\frac{1}{t}\int_{0}^{t}
\big(\hat{I}^{\gamma_0}(s)\big)^{-\kappa}\mathrm{d}s=
\int_{\mathbb{R}_{+}}x^{-\kappa}\mu_{I}^{\gamma_0}(\mathrm{d}x)
=\rho_{\kappa}(\gamma_0)~~\mathrm{a.s.}
\end{align}
In view of \eqref{16}, for any $\varepsilon>0$ there exists a subset $\Omega_1\subset\Omega$ and a constant $T_1:=T_1(\varepsilon)>0$  such that $\mathbb{P}(\Omega_1)>1-\varepsilon/4$, where
\begin{align}\label{a12}
\Omega_1=\Big\{\omega\in \Omega:~&\frac{1}{t}\int_{0}^{t}\hat{I}^{\gamma_0}(s)\mathrm{d}s
\geq\frac{\lambda_I}{d_I}-\gamma_0+\frac{\lambda_{\gamma_0}}{4d_U},
~~\forall t\geq T_1\Big\}.
\end{align}
According to \eqref{l3.22}, there exists  a subset $\Omega_2\subset \Omega$ and a constant $T_2:=T_2(\varepsilon)>0$ such that $\mathbb{P}(\Omega_2)>1-\varepsilon/4$, where
\begin{align}\label{a9}
\Omega_2&=\Big\{\omega\in \Omega: 0\leq \frac{1}{t}\int_{0}^{t}\big(\hat{I}^{\gamma_0}(s)\big)^{-\kappa}\mathrm{d}s\leq \rho_{\kappa}(\gamma_0)-\frac{\lambda_{\gamma_0}}{4b_U\gamma_0^{\kappa}},~~\forall t\geq T_2\Big\}.
\end{align}By the strong law of large numbers \cite[Theorem 1.6, p.16]{Mao2006} for $B_2(t)$,
 \begin{align}\label{cypp3.26}
 \lim_{t\rightarrow\infty}\frac{B_2(t)}{t}=0~~\mathrm{a.s.}
 \end{align}
Then there exists a subset $\Omega_3\subset \Omega$ and a constant $T_3:=T_{3}(\varepsilon)>0$ such that $\mathbb{P}(\Omega_3)>1-\varepsilon/4,$ where
\begin{align}\label{a2}
\Omega_3=\Big\{\omega\in \Omega: \frac{|\sigma_U B_2(t)|}{t} \leq -\frac{\lambda_{\gamma_0}}{4}, ~~\forall t\geq T_{3}\Big\}.
\end{align}
Let $\check{T}:=T_1\vee T_2\vee T_3$, and choose $M>b_{U}\check{T}$ sufficiently  large such that $\mathbb{P}(\Omega_4)>1-\varepsilon/4,$ where
\begin{align}\label{e17}
 \Omega_4=\Big\{\omega\in \Omega: |\sigma_UB_2(t)|\leq M-b_U\check{T}, ~~\forall 0\leq t\leq \check{T}\Big\}.
 \end{align}
From the second equation of \eqref{e2},   using the $\mathrm{It\hat{o}}$ formula yields
\begin{align}\label{eq}
U(t)=U_0\exp\left\{\int_{0}^{t}\left(\frac{b_UU(s)}{I(s)+U(s)}
-\delta_U-\frac{\sigma_U^2}{2}-d_U\Big(I(s)+U(s)\Big)\right)
\mathrm{d}s+\sigma_UB_{2}(t)\right\}.
 \end{align}
 Then letting $\gamma^*\in (0,\gamma_0 e^{-M})$,  we derive from  \eqref{e17} and \eqref{eq}  that
 \begin{align}\label{e3.26}
U(t) \leq U_0\exp\left(b_U\check{T}+|\sigma_U B_2(t)|\right)
 \leq U_0e^M<\gamma_0
 \end{align}
for any $t\in [0, \check{T}]$,   $U_0\leq \gamma^*$ and  $\omega\in \Omega_4$.
 Now we define a stopping time
\begin{align*}
\tau=\inf\{t\geq0: |U(t)|\geq \gamma_0\}.
\end{align*}
One observes from  \eqref{e3.26} that $\tau>\check{T}$ for $\omega\in \Omega_4$ and $U_0\leq \gamma^*$. For clarity  we  rewrite the first equation of  \eqref{e2} as
\begin{align}\label{e3.40}
\mathrm{d}I(t)=I(t)\Big(b_I-\delta_I-d_I\gamma_0 -d_II(t)+d_I\big(\gamma_0-U(t)\big)\Big)\mathrm{d}t+\sigma_II(t)\mathrm{d}B_1(t).
\end{align}
Applying a comparison argument  to \eqref{e15} and \eqref{e3.40} implies that
\begin{align}\label{cyp3.36}
 0<\hat{I}^{\gamma_0}(t)\leq I(t)
\end{align}
for $0\leq t< \tau$ a.s.
Moreover, due to the fixed $\kappa \in (0,q_{I}/2\wedge 1)$,  for $0\leq t<\tau$, we know
\begin{align}\label{ae2}
\frac{U(t)}{I(t)+U(t)}\leq \left(\frac{U(t)}{I(t)+U(t)}\right)^{\kappa} \leq \left(\frac{U(t)}{\hat{I}^{\gamma_0}(t)}\right)^{\kappa} \leq\Big(\frac{\gamma_0}{\hat{I}^{\gamma_0}(t)}\Big)^{\kappa}.
\end{align}
Thus  utilizing \eqref{cyp3.36} and \eqref{ae2}, we deduce from \eqref{eq}  that for $0\leq t< \tau$,
\begin{align*}
U(t)\leq  U_0\exp\left\{b_U\int_{0}^{t}\bigg(\frac{\gamma_0}{\hat{I}^{\gamma_0}(s)}\bigg)^{\kappa}\mathrm{d}s
-\Big(\delta_U+\frac{\sigma_U^2}{2}\Big)t-d_U\int_{0}^{t}\hat{I}^{\gamma_0}(s)\mathrm{d}s
+\sigma_UB_2(t)\right\}.
\end{align*}
Combining \eqref{a12}, \eqref{a9} and \eqref{a2}, for  {$\omega\in \hat{\Omega}:=\cap_{j=1}^4\Omega_j$} and $0< U_0\leq \gamma^*$, we have $\check{T}< \tau$ and
\begin{align}\label{a6}
U(t)&\leq U_0\exp\left\{\Big(b_U\rho_{\kappa}(\gamma_0)\gamma_0^{\kappa}
-\frac{\lambda_{\gamma_0}}{4}\Big)t
-\Big(\delta_U+\frac{\sigma_U^2}{2}\Big)t  -d_U\Big(\frac{\lambda_I}{d_I}-\gamma_0+\frac{\lambda_{\gamma_0}}{4d_U}\Big)t
-\frac{\lambda_{\gamma_0}}{4}t\right\}\nn\
\\
&= U_0\exp\left\{\Big(b_U\rho_{\kappa}(\gamma_0)\gamma_0^{\kappa}
+d_U\gamma_0
+\lambda\Big)t
-\frac{3\lambda_{\gamma_0}}{4}t\right\}\nn\\
&= U_0 e^{\lambda_{\gamma_0}t/4}\leq\gamma^* e^{\lambda_{\gamma_0}t/4}<\gamma_0, ~~\forall t\in [\check{T},\tau).
\end{align}
As a result of \eqref{a6}, for $0<U_0\leq \gamma^*$, we must have $\tau=\infty$ for almost all $\omega\in \hat{\Omega}$. We obtain this assertion by a contradiction argument as follows.  Suppose that $\tau=\infty $ for almost all $\omega\in \hat{\Omega}$ doesn't hold. Then there exists a set $\Omega_5\subset\hat{\Omega}$ with $\mathbb{P}(\Omega_5)>0$ such that for $\omega\in \Omega_5$, $\tau<\infty$. Note that we have already proved that $\tau>\check{T}$ for $\omega\in \hat{\Omega}$. In view of \eqref{a6}, $U(t)\leq \gamma^*<\gamma_0$ for any $t\in [\check{T},\tau)$. Since $U(t)$ is continuous a.s.,  for almost all $\omega\in \Omega_5$, we have $U(\tau)=\lim\limits_{t\rightarrow\tau}U(t)\leq \gamma^*<\gamma_0,$ which is a contradiction with the definition of $\tau$. Hence, $\tau=\infty$ for almost all $\omega\in \hat{\Omega}.$ This fact together with \eqref{a6} implies that
\begin{align}
U(t)\leq \gamma^* e^{\lambda_{\gamma_0}t/4},~\mathrm{for~any}~t\geq \check{T},~\omega\in  \hat{\Omega} ~\mathrm{and}~U_0\leq \gamma^*.\nn\
\end{align}
Then for any initial value $(I_0,U_0)\in [0,H]\times(0,\gamma^*]$,
\begin{align}\label{a4}
\lim_{t\rightarrow\infty}U(t)=0~ \mathrm{for ~almost~all}~ \omega\in \hat{\Omega}.
\end{align}
{\bf\underline{ Step 2}.} Prove that  the random occupation measure $\mathbf{\Pi}^{t}(\cdot)$ converges weakly to $\mu_{I}\times \boldsymbol{\delta}_0$ with sufficiently large probability as $t\rightarrow\infty$.
Due to $\lambda_I>0$, in view of Lemma \ref{l1}, the process $\check{I}(t)$ has a unique stationary distribution $\mu_{I}$.
 Thanks to \eqref{a4} and Lemma \ref{mL3.6}, for almost all $\omega\in \hat{\Omega}$, there exists a random probability measure $\pi_1$ on $\RR_+\times\{0\}$ that is a weak-limit of $\mathbf{\Pi}^{t}(\cdot)$, and $\pi_1$ is an invariant probability measure of  solution process $(I(t),U(t))$ for almost all $\omega\in \hat{\Omega}$.  One notices that $\RR_+^{\circ}\times \{0\}$ and $\{(0,0)\}$ are two invariant sets of $(I(t),U(t))$. Furthermore, $\mu_I\times\boldsymbol{\delta}_{0}$ and $\boldsymbol{\delta}_{(0,0)}$ are the unique invariant probability measures of $(I(t),U(t))$
on invariant sets $\RR_+^{\circ}\times\{0\}$ and $\{(0,0)\}$, respectively.  Referring to \cite[p.121]{Kha}, we derive that $\pi_1$ must be the convex combination of  invariant probability measures  $\mu_I\times\boldsymbol{\delta}_0$ and $\boldsymbol{\delta}_{(0,0)}$, i.e.,
$$\pi_1=\theta(\mu_I\times\boldsymbol{\delta}_0)+(1-\theta)\boldsymbol{\delta}_{(0,0)}$$
for almost all $\omega\in \hat{\Omega}$, where  $0\leq \theta\leq 1$ for almost all $\omega\in \hat{\Omega}$.
Then by  this weak convergence and  the uniform  integrability in  \eqref{l3.16} and \eqref{a4}, applying \cite[Lemma 3.1]{Hening2018} we deduce from \eqref{L3.8} that
\begin{align*}
d_I\lim_{t\rightarrow\infty}\frac{1}{t}\int_{0}^{t}(I(s)+U(s))\mathrm{d}s
&=d_I\lim_{t\rightarrow\infty}\int_{\mathbb{R}^{2}_{+}}(x+y)\mathbf{\Pi}^{t}(\mathrm{d}x, \mathrm{d}y)\nn\
\\&=\theta d_I\int_{\mathbb{R}^{2}_+}(x+y)\mu_{I}\times\boldsymbol{\delta}_0
\big(\mathrm{d}x,\mathrm{d}y\big)=\theta\lambda_I~~\hbox{for~almost~all~} \omega\in \hat{\Omega}.
\end{align*}
Then letting $t\rightarrow\infty$ in \eqref{cypp3.7} and then using \eqref{cypp3.6} yield that
\begin{align*}
\lim_{t\rightarrow\infty}\frac{\ln I(t)}{t}&=\lambda_I-d_I\lim_{t\rightarrow\infty}\frac{1}{t}\int_{0}^{t}(I(s)+U(s))\mathrm{d}s=(1-\theta)\lambda_I~~\hbox{for~ almost~all~}~\omega\in \hat{\Omega}.\nn\
\end{align*}
This together with \eqref{cyp3.3} as well as \eqref{me3.8}  implies
\begin{align*}
(1-\theta)\lambda_I\leq 0~~\hbox{for~almost~all~} \omega\in \hat{\Omega}.
\end{align*}
Thus $\theta=1$ for almost all $\omega\in \hat{\Omega}$, which indicates that the random occupation measure $\mathbf{\Pi}^{t}(\cdot)$ converges weakly to $\mu_I\times\boldsymbol{\delta}_0$ as $t\rightarrow\infty$ for almost all $\omega\in \hat{\Omega}$.
\par {\bf \underline{Step 3}.} Prove that $U(t)\rightarrow 0$  at the exponential rate $\lambda$ with
 sufficiently large probability. By the weak convergence of $\mathbf{\Pi}^{t}(\cdot)$ and the uniform integrability in \eqref{l3.16} and \eqref{a4}, using \cite[Lemma 3.1]{Hening2018} and \eqref{L3.8} shows that
\begin{align*}
&\lim_{t\rightarrow\infty}\frac{1}{t}\int_{0}^{t}\Big[\frac{b_{U}U(s)}{I(s)+U(s)}-d_U\big(I(s)+U(s)\big)\Big]\mathrm{d}s\nn\
\\=&\lim_{t\rightarrow\infty}\int_{\mathbb{R}^{2}_{+}}\Big(\frac{b_{U}y}{x+y}-d_U(x+y)\Big)\mathbf{\Pi}^{t}(\mathrm{d}x,\mathrm{d}y)\nn\
\\=&\int_{\mathbb{R}^{2}_{+}}\Big(\frac{b_{U}y}{x+y}-d_U (x+y)\Big) \mu_I\times\boldsymbol{\delta}_0\big(\mathrm{d}x,\mathrm{d}y\big)
\\=&-d_U\frac{\lambda_I}{d_{I}}~~\hbox{for~almost~all~} \omega\in \hat{\Omega}.
\end{align*}
Then letting $t\rightarrow\infty$ in \eqref{eq_yhf1} and utilizing \eqref{cypp3.26} give that
\begin{align*}
\lim_{t\rightarrow\infty}\frac{\ln U(t)}{t}&=\lim_{t\rightarrow\infty}\frac{1}{t}\int_{0}^{t}\Big[\frac{b_{U}U(s)}{I(s)+U(s)}
-d_U\big(I(s)+U(s)\big)\Big]\mathrm{d}s-\delta_U-\frac{\sigma_U^2 }{2}=\lambda
\end{align*}
for almost $\omega\in \hat{\Omega}$ and $(I_0,U_0)\in(0,H]\times (0, \gamma^*]$, which implies the desired result.
\end{proof}
\par Lemmas \ref{L3.4} and \ref{L3} reveal the transience of $(I(t),U(t))$ on invariant set $\mathbb{R}^{2,\circ}_{+}$, which implies that there is no invariant probability measure on $\mathbb{R}^{2,\circ}_{+}$. Next we point out that any invariant probability measure of $(I(t),U(t))$ (if it exists)   assigns all of
its mass to the boundary $\partial \mathbb{R}^{2}_{+}$.
\begin{theorem}\label{L3.6}
For 
$\lambda_I\neq 0$,
any invariant probability measure of process $(I(t),U(t))$ is of the form
\begin{align}\label{l3.30}
 l_1(\mu_I\times \boldsymbol{\delta}_0)+l_2(\boldsymbol{\delta}_0\times \mu_U)+l_3\boldsymbol{\delta}_{(0,0)},
\end{align}
where $ l_1, l_2
$ and $l_3$ are non-negative constants  such that $l_1+l_2+l_3=1.$
Moreover, for  any $(I_0,U_0)\in \mathbb{R}^{2,\circ}_{+}$ and  $p>0$,
\begin{align}\label{e3.13}
\lim_{t\rightarrow\infty}\mathbb{E}\big[\big(I(t)\wedge U(t)\big)^{p}\big]=0.
\end{align}
\end{theorem}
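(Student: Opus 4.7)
My approach is two-fold. First I would classify all invariant probability measures by eliminating mass in the interior $\mathbb{R}^{2,\circ}_+$ and identifying the measures on each boundary piece; second I would use this classification together with moment bounds to drive $I(t)\wedge U(t)$ to zero in $L^p$.

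Granting for the moment that every invariant probability measure $\pi$ is supported on $\partial\mathbb{R}^2_+$, the form \eqref{l3.30} follows quickly. Indeed, $\mathbb{R}_+^\circ\times\{0\}$, $\{0\}\times\mathbb{R}_+^\circ$ and $\{(0,0)\}$ are invariant (Theorem \ref{th3.1}) and together cover $\partial\mathbb{R}^2_+$. Restricted to each invariant piece, $\pi$ (suitably normalized) is invariant for the restricted dynamics, which on the first two sets reduce to the one-dimensional boundary SDEs for $\check I$ and $\check U$. By Lemma \ref{l1}(3) the only invariant probability measure of those reduced equations on $\mathbb{R}_+^\circ$ is $\mu_I$ when $\lambda_I>0$ (respectively $\mu_U$ when $\lambda_U>0$), and $\boldsymbol{\delta}_{(0,0)}$ is trivially invariant on $\{(0,0)\}$. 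Combining the three pieces yields $\pi=l_1(\mu_I\times\boldsymbol{\delta}_0)+l_2(\boldsymbol{\delta}_0\times\mu_U)+l_3\boldsymbol{\delta}_{(0,0)}$ with $l_1+l_2+l_3=1$, under the convention that $l_i=0$ on any piece carrying no invariant probability measure.

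The central task is therefore to rule out $\pi(\mathbb{R}^{2,\circ}_+)>0$. For $\lambda_I<0$ this follows by integrating the a.s.\ extinction $I(t)\to 0$ of Lemma \ref{L3.4}: for every $\epsilon>0$ and $(I_0,U_0)\in\mathbb{R}^{2,\circ}_+$ we have $\mathbb{P}_{(I_0,U_0)}(I(t)>\epsilon)\to 0$, while stationarity forces $\int\mathbb{P}_{(\cdot)}(I(t)>\epsilon)\,d\pi=\pi(I>\epsilon)$ for every $t$; dominated convergence gives $\pi(I>\epsilon)=0$ and, letting $\epsilon\downarrow 0$, $\pi(\mathbb{R}^{2,\circ}_+)=0$. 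For $\lambda_I>0$ I would argue analogously using Lemma \ref{L3}: if $\pi(\mathbb{R}^{2,\circ}_+)>0$ then by $\sigma$-additivity one can choose $H$ large and $\gamma^*$ small so that $\pi((0,H]\times(0,\gamma^*])>0$, and on that set Lemma \ref{L3} guarantees (for any prescribed $\varepsilon>0$) that $\mathbb{P}_{(I_0,U_0)}(U(t)>\eta)<\varepsilon$ for all large $t$. Integrating against $\pi$ restricted to this set and using invariance gives $\pi(U>\eta)\leq\varepsilon$, after which taking $\varepsilon\downarrow 0$ followed by $\eta\downarrow 0$ contradicts $\pi(U>0)>0$ on the conditioning set.

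For the moment statement \eqref{e3.13}, Lemma \ref{l1}(1) combined with the comparison \eqref{cyp3.3} yields $\sup_{t\geq 0}\mathbb{E}[I(t)^{p+1}+U(t)^{p+1}]<\infty$, which simultaneously gives tightness of $\{\mathrm{Law}(I(t),U(t))\}_{t\geq 0}$ and uniform integrability of $\{(I(t)\wedge U(t))^p\}_{t\geq 0}$. By the Feller property from Theorem \ref{th3.1} and the standard Krylov--Bogoliubov argument (cf.\ \cite[Proposition 9.1]{Evins2015}), every weak-limit of these laws is an invariant probability measure, hence by the classification is supported on $\partial\mathbb{R}^2_+=\{x\wedge y=0\}$. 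Therefore $I(t)\wedge U(t)\to 0$ in distribution, and the uniform integrability upgrades this to the desired $L^p$ convergence. The subtlest step I anticipate is the $\lambda_I>0$ contradiction, since Lemma \ref{L3}'s escape property is only uniform for $U_0\leq\gamma^*$, so the disintegration-and-integration step needs care to ensure that the ``$\varepsilon$ chosen in advance'' in Lemma \ref{L3} interacts correctly with the eventual limit $\varepsilon\downarrow 0$.
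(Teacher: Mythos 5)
Your overall architecture matches the paper's: rule out invariant mass in the interior via the transience Lemmas \ref{L3.4} and \ref{L3}, decompose any boundary-supported invariant measure into the extremal measures $\mu_I\times \boldsymbol{\delta}_0$, $\boldsymbol{\delta}_0\times\mu_U$, $\boldsymbol{\delta}_{(0,0)}$ on the three invariant boundary pieces, and obtain \eqref{e3.13} from tightness of the laws plus uniform integrability of $(I(t)\wedge U(t))^p$ (the paper runs this last step as a Prokhorov-plus-contradiction argument, which is the same idea). Your treatment of the case $\lambda_I<0$ (integrating $\mathbb{P}_{(x,y)}(I(t)>\epsilon)$ against $\pi$ and using dominated convergence) is correct.

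The genuine gap is in the case $\lambda_I>0$. First, ``by $\sigma$-additivity one can choose $H$ large and $\gamma^*$ small so that $\pi((0,H]\times(0,\gamma^*])>0$'' is backwards: shrinking $\gamma^*$ shrinks the set, and an invariant $\pi$ with interior mass could a priori be concentrated on $\{y\geq 1\}$, giving the strip zero mass; moreover $\gamma^*$ is not free but is dictated by Lemma \ref{L3} once $\varepsilon$ and $H$ are fixed. (The claim can be rescued, but only via non-degeneracy of the diffusion in $\RR^{2,\circ}_+$, which forces any interior invariant measure to have full support there --- not via $\sigma$-additivity.) Second, and more seriously, ``integrating against $\pi$ restricted to this set and using invariance gives $\pi(U>\eta)\leq\varepsilon$'' does not follow: invariance is the global identity $\pi(U>\eta)=\int_{\RR^2_+}\mathbb{P}_{(x,y)}(U(t)>\eta)\,\pi(\mathrm{d}x,\mathrm{d}y)$, and controlling the integrand only on the thin strip $A=(0,H]\times(0,\gamma^*]$ yields at best $\pi(U>\eta)\leq\varepsilon\,\pi(A)+(1-\pi(A))$, which is not small. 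The missing idea is the one the paper's occupation-measure machinery (Lemma \ref{mL3.6}) supplies: take an ergodic component $\nu$ of $\pi$ supported in the interior; by Birkhoff's theorem the process started from $\nu$-a.e.\ point spends a positive asymptotic fraction of time in $A$, hence enters $A$ at a finite time a.s., and then the strong Markov property together with Lemma \ref{L3} forces $U(t)\to 0$ with probability at least $1-\varepsilon$, contradicting the a.s.\ convergence of $\mathbf{\Pi}^t$ to $\nu$. Without some such recurrence/ergodicity input, the escape property of Lemma \ref{L3}, which is only local near $\{U=0\}$, cannot be converted into $\pi(\RR^{2,\circ}_+)=0$.
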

\begin{proof}
 Thanks to \eqref{cyp3.3}, we derive from  Lemma \ref{l1} that for any $p>0$
 \begin{align*}
\lim_{t\rightarrow\infty}\mathbb{E}\Big[\big(I(t)\big)^{p}+\big(U(t)\big)^{p}\Big]&\leq \lim_{t\rightarrow\infty}\mathbb{E}\Big[\big(\check{I}(t)\big)^{p}+\big(\check{U}(t)\big)^{p}\Big]
\leq C_p.
 \end{align*}
 This together with the continuity of $\mathbb{E}\big[(I(t))^{p}\big]$ and $\mathbb{E}\big[(U(t))^{p}\big]$ leads to that
 \begin{align}\label{cor3.34}
 \sup_{t\geq0}\mathbb{E}\Big[\big(I(t)\big)^{p}+\big(U(t)\big)^{p}\Big]<\infty,
 \end{align}
 which implies that the probability distribution $\big\{\mathbb{P}_{(I_0,U_0)}\big((I(t),U(t))\in\cdot\big)\big\}_{t\geq 0}$ is tight in $\mathbb{R}^{2}_{+}$. Using  Theorem \ref{th3.1} and the   Krylov-Bogoliubov theorem \cite[Theorem 7.1, P.94]{Da} yields the existence of invariant probability  measure of $(I(t),U(t))$ on $\mathbb{R}^{2}_{+}$.  By virtue of Lemmas \ref{L3.4} and \ref{L3}, the solution process  $(I(t),U(t))$ is transient on invariant set $\mathbb{R}^{2,\circ}_{+}$. As a result, any invariant probability measure of $(I(t),U(t))$   concentrates
on $\partial \mathbb{R}^{2}_{+}$. One notices that $\mu_I\times\boldsymbol{\delta}_0$, $\boldsymbol{\delta}_0\times \mu_U$ and $\boldsymbol{\delta}_{(0,0)}$ are unique invariant probability measures (if  exist) on invariant sets $\RR_{+}^{\circ}\times \{0\}, \{0\}\times \RR_{+}^{\circ}~\hbox{and}~\{(0,0)\}$, respectively. Referring to \cite[p.121]{Kha}, we derive that any invariant probability measure of   $(I(t),U(t))$ has the form   \eqref{l3.30}.
Assume that \eqref{e3.13} doesn't hold. In fact,   there exists $(I_0,U_0)\in \mathbb{R}^{2,\circ}_{+}$,   $p*>0$, $\varepsilon_0>0$ and a sequence $\{t_{k}\}_{k=1}^{\infty}$ satisfying $\lim\limits_{k\rightarrow\infty}t_k=\infty$ such that
\begin{align}\label{e3.28}
\limsup_{k\rightarrow\infty}
\mathbb{E}_{(I_0,U_0)}\big(I(t_k)\wedge U(t_k)\big)^{p*}\geq\varepsilon_0.
\end{align}
 By the Prokhorov theorem \cite[Theorem 16.3]{MR1876169}, there exists a subsequence  still denoted by $\{t_k\}_{k\geq 0}$ with a notation abuse slightly  such that $\mathbb{P}_{(I_0,U_0)}\big((I(t_k),U(t_k))\in\cdot\big)$ converges weakly to an invariant probability measure  denoted by $\pi_2$ with the form of \eqref{l3.30}. Hence, by virtue of  the uniform integrability in \eqref{cor3.34},  it follows from \cite[Lemma 3.1]{Hening2018} that $$ \lim_{k\rightarrow\infty}
\mathbb{E}_{(I_0,U_0)}\big(I(t_k)\wedge U(t_k)\big)^{p*}=\int_{\mathbb{R}^{2}_{+}}\big(x\wedge y\big)^{p*}\pi_2(\mathrm{d}x,\mathrm{d}y)=0,$$
which contradicts with \eqref{e3.28}. The proof is complete.
\end{proof}
\begin{rem}
Theorem \ref{L3.4} reveals that  in  stochastic environment it is  impossible for infected  and uninfected mosquito populations  to coexist {in the long term.}
\end{rem}
{Then it is natural to ask ``which mosquito population will persist or go extinct". For clarity we give  the definitions of persistence and extinction.
  Referring to the definition of population stochastic persistence in \cite{JDEA,Benaim}, 
  we define  the stochastic persistence for each  mosquito population.
\begin{definition}\label{def1}
The infected  (uninfected)  mosquito population is almost surely stochastically persistent if for any $\varepsilon>0$, there exists a constant $\eta>0$ such that for any initial value $(I_0,U_0)\in \RR^{2,\circ}_{+}$,
\begin{align*}
\limsup_{t\rightarrow\infty}\mathbf{\Pi}^{t}\big((0,\eta)\times \RR_{+}\big)<\varepsilon ~~\Big(\limsup_{t\rightarrow\infty}\mathbf{\Pi}^{t}\big(\RR_{+}\times(0,\eta)\big)<\varepsilon\Big)~~\mathrm{a.s.}
\end{align*}
\end{definition}
This  persistence definition implies that the fraction of time that $I(t)$~$(U(t))$ spends  staying
near extinction state zero is very small. Then  referring to the population extinction defined in \cite{ning2018,Hening2018}, we present the definition of extinction  of each mosquito population.
\begin{definition}
The infected  (uninfected)  mosquito population goes extinct if  for any initial value $(I_0,U_0)\in \RR^{2,\circ}_{+}$,
\begin{align*}
\lim_{t\rightarrow\infty}I(t)=0~~\Big(\lim_{t\rightarrow\infty}U(t)=0\Big)~~\mathrm{a.s.}
\end{align*}
Moreover,  if there exists a constant $\chi<0$ such that for any initial value $(I_0,U_0)\in \RR^{2,\circ}_{+}$,
\begin{align*}
\lim_{t\rightarrow\infty}\frac{\ln I(t)}{t}=\chi~~\Big(\lim_{t\rightarrow\infty}\frac{\ln U(t)}{t}=\chi \Big)~~\mathrm{a.s.,}
\end{align*}
we say that infected (uninfected) mosquito population goes extinct exponentially fast.
\end{definition}}
\par
 Next, we give more precise dynamical characterizations for stochastic mosquito populations.
{\begin{lemma}\label{L3.3}
Assume that  $\lambda_I>0$  and   $\lim_{t\rightarrow\infty}U(t)=0$ a.s. for any  $(I_0,U_0)\in \RR^{2,\circ}_{+}$. Then for any $\varepsilon>0$, there are  constants $T>0$ and $H>1$  such that
\begin{align}
&\mathbb{P}_{(I_0,U_0)}\Big(\frac{1}{H}\leq I(t)\leq H\Big)>1-\varepsilon,~~\forall t\geq T,\nn\
\end{align}
where $T$ depends on $\varepsilon$ and   $(I_0,U_0)$.
\end{lemma}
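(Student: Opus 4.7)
The plan is to bound $I(t)$ from above and below separately and then combine the two estimates via a union bound. The upper bound is essentially free: by the comparison inequality \eqref{cyp3.3} we have $I(t)\leq\check{I}(t)$ a.s., and since $\lambda_I>0$, Lemma \ref{l1}(3) tells us that the law of $\check{I}(t)$ converges weakly to the Gamma distribution $\mu_I$ supported on $\RR_+^{\circ}$. For a given $\varepsilon>0$ I would first choose $H_1$ large so that $\mu_I([0,H_1])>1-\varepsilon/3$, and then pick $T_0$, depending on $I_0$ and $\varepsilon$, so that $\mathbb{P}_{I_0}(\check{I}(t)\leq H_1)>1-\varepsilon/2$ for every $t\geq T_0$.

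The nontrivial part is the lower bound $I(t)\geq 1/H$. The idea is to exploit the hypothesis $U(t)\to 0$ a.s.\ to decouple the first equation from $U$ for large time. Concretely, I would first fix $\delta>0$ so small that $\lambda_I-d_I\delta>0$; then the a.s.\ convergence of $U$ yields a time $T_1=T_1((I_0,U_0),\varepsilon)$ with $\mathbb{P}_{(I_0,U_0)}\big(U(s)\leq\delta,\,\forall\,s\geq T_1\big)>1-\varepsilon/4$. On this good event, the first equation of \eqref{e2} furnishes the pathwise differential inequality
\begin{equation*}
\mathrm{d}I(t)\geq I(t)\big(b_I-\delta_I-d_I\delta-d_I I(t)\big)\mathrm{d}t+\sigma_I I(t)\mathrm{d}B_1(t),\qquad t\geq T_1,
\end{equation*}
whose equality counterpart is precisely the logistic SDE treated in Lemma \ref{l1} with shifted drift parameter $\lambda_I-d_I\delta>0$. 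Letting $\tilde{I}^{\delta}(t)$ denote the solution of that equality SDE started from $\tilde{I}^{\delta}(T_1)=I(T_1)>0$, the classical stochastic comparison theorem gives $I(t)\geq\tilde{I}^{\delta}(t)$ for $t\geq T_1$ on the good event; and Lemma \ref{l1}(3) guarantees that the law of $\tilde{I}^{\delta}(t)$ converges weakly to $\mu_I^{\delta}:=Ga(q_I-\delta\beta_I,\beta_I)$, which lives on $\RR_+^{\circ}$, so a large $H_2$ can be chosen with $\mu_I^{\delta}([1/H_2,\infty))>1-\varepsilon/4$.

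The main technical obstacle I anticipate is that $\tilde{I}^{\delta}(T_1)=I(T_1)$ is a \emph{random} initial condition, so I have to transfer the weak convergence from each fixed positive starting point to a random starting point. The remedy is to condition on $\F_{T_1}$: since $I(T_1)>0$ a.s., for almost every realisation the transition kernel $p(t-T_1,I(T_1),[1/H_2,\infty))$ tends to $\mu_I^{\delta}([1/H_2,\infty))$ as $t\to\infty$, and the bounded convergence theorem lifts this to $\mathbb{P}(\tilde{I}^{\delta}(t)\geq 1/H_2)\to \mu_I^{\delta}([1/H_2,\infty))$. Hence a further time $T_2$ ensures $\mathbb{P}(\tilde{I}^{\delta}(t)\geq 1/H_2)>1-\varepsilon/2$ for every $t\geq T_2$. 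Setting $H:=H_1\vee H_2$ and $T:=T_0\vee T_1\vee T_2$ and combining the two tail estimates through a union bound then yields $\mathbb{P}_{(I_0,U_0)}(1/H\leq I(t)\leq H)>1-\varepsilon$ for every $t\geq T$, which is exactly the claim.
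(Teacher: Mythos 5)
Your proposal is correct in substance and shares the paper's key first move, but the engine behind the lower bound is genuinely different. Like the paper, you use $U(t)\to 0$ a.s.\ to fix a time $T_1$ and a high-probability event on which $U\leq\delta$ thereafter, and you dominate $I$ from below by the auxiliary logistic diffusion with drift shifted by $-d_I\delta$, started from the random point $I(T_1)$ (this is exactly the paper's $\hat I^{\varepsilon}$ and the comparison \eqref{cyL3.36}). Where you diverge is in how you keep this auxiliary process away from $0$: you invoke the weak convergence of the scalar diffusion to its Gamma stationary law $Ga(q_I-\delta\beta_I,\beta_I)$ from Lemma \ref{l1}(3) and transfer it through the random initial condition by conditioning on $\F_{T_1}$ and bounded convergence; the paper instead proves a uniform-in-time bound on the negative moment $\E\big[(\hat I^{\varepsilon}(t))^{-\rho}\big]$, $\rho\in(0,q_I)$, via the exponentially weighted Lyapunov function $e^{\alpha (t-T_1)}(1+1/x)^{\rho}$ together with the Markov property and Chapman--Kolmogorov equation, and then applies Chebyshev. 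Your route is softer and shorter, and your treatment of the random start (pointwise convergence of $p(t-T_1,I(T_1),\cdot)$ plus dominated convergence) is legitimate; the paper's route buys an explicit quantitative tail estimate of the form $\mathbb{P}(\hat I^{\varepsilon}(t)\leq r)\leq L(\alpha,\rho)r^{\rho}$ that does not require identifying the stationary law. For the upper bound you use weak convergence of $\check I(t)$ where the paper uses the moment bound of Lemma \ref{l1}(1) plus Chebyshev; both are immediate. Two minor repairs: your $\varepsilon$-budget as written sums to $5\varepsilon/4$ (an $\varepsilon/2$ upper-bound failure plus $\varepsilon/4+\varepsilon/2$ from the lower bound), so the fractions need tightening; and when passing from weak convergence to convergence of $\mathbb{P}(\tilde I^{\delta}(t)\geq 1/H_2)$ you should note that the boundary $\{1/H_2\}$ is null for the (absolutely continuous) Gamma law, which is what the portmanteau theorem requires.
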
}
 \begin{proof}  { Since the proof is rather technical we divide it into three steps.}

 \underline{Step 1.} ~~In order for the lower bound of $I(t)$ we construct the corresponding comparison equation.
 For any $(I_0,U_0)\in \RR^{2,\circ}_{+}$ and  any $\varepsilon>0$,
owing to the fact $\lim_{t\rightarrow\infty}U(t)=0$ a.s.  there { exists a set $\Omega_\varepsilon\subset\Omega$ and a constant $T_1 =T_1(\varepsilon,I_0,U_0)$} such that  $\mathbb{P}(\Omega_\varepsilon)>1-\varepsilon/3$, where
\begin{align}\label{cyL3.34}
\Omega_\varepsilon=\{\omega\in \Omega: U(t)\leq \varepsilon,~~\forall t\geq T_1\}.
\end{align}
Then we define an auxiliary process $\hat{I}^{\varepsilon}(t)$ on $[T_1, \infty)$ by
\begin{align}\label{cyL3.38}
\mathrm{d}\hat{I}^{\varepsilon}(t)=\hat{I}^{\varepsilon}(t)(b_I-\delta_I-d_I\varepsilon-d_I\hat{I}^{\varepsilon}(t))\mathrm{d}t+\sigma_I \hat{I}^{\varepsilon}(t)\mathrm{d}B_1(t)
\end{align}
with $\hat{I}^{\varepsilon}(T_1)=I(T_1)>0$. It follows from \eqref{cyL3.34} and the comparison theorem
 \cite[Thoerem 1.1, p.352]{N1989} that \begin{align}\label{cyL3.36}
\hat{I}^{\varepsilon}(t)\leq I(t),~~\forall \omega\in \Omega_\epsilon,~t\geq T_1.
\end{align}

  \underline{Step 2.} ~~We analyze uniformly  upper boundedness of $\E(\hat{I}^{\varepsilon}(t))^{-\rho}$   by the stochastic Lyapunov analysis. {Let $\hat{I}^{\varepsilon}(T_1)=\hat{x}$ and $V(x)=(1+1/x)^{\rho},~x\geq0$ for any $\rho\in (0,q_I)$.}  Then choosing a positive constant $\alpha>0$ and using the It\^o formula for \eqref{cyL3.38} yield that
\begin{align*}
M_V(t):=e^{\alpha (t-T_1)}V(\hat{I}^{\varepsilon}(t))-V(\hat{x})-\int_{T_1}^{t}\mathcal{L}\big[e^{\alpha (s-T_1)}V(\hat{I}^{\varepsilon}(s))\big]\mathrm{d}s
\end{align*}
is a local martingale, where
\begin{align*}
\mathcal{L}\big(e^{\alpha (t-T_1)}V(x)\big)&:=\rho e^{\alpha (t-T_1)}\big(1+\frac{1}{x}\big)^{\rho-2}\Big[-\frac{1}{x^2}\Big(\lambda_I-\frac{\rho\sigma_I^2}{2}-d_I\varepsilon-\frac{\alpha}{\rho}\Big)\nn\
\\&~~~+\frac{1}{x}\Big(-b_I+\delta_I+\sigma_I^2+d_I\varepsilon+\frac{2\alpha}{\rho}\Big)
+\frac{\alpha}{\rho}+d_I\Big]\\
&=\rho e^{\alpha (t-T_1)}\theta(x),~~\forall x>0.
\end{align*}
Recalling \eqref{a3.9}, thanks to  $\rho\in (0, q_I)$, we know that $\lambda_I-\rho\sigma_I^2/2>0$. Thus we can choose $\varepsilon=\varepsilon(\rho)$ and $\alpha=\alpha(\rho)$ sufficiently small such that $\lambda_I-\rho\sigma_I^2/2-d_I\varepsilon-\alpha/\rho>0$. Then one observes that
\begin{align*}
\lim_{x\rightarrow0^+}\theta(x)=-\infty~~\hbox{and}~~ \lim_{x\rightarrow\infty}\theta(x)=\frac{\alpha}{\rho}+d_I,
\end{align*}
which together with the continuity of $\theta(x)$ for $x\in (0,\infty)$ implies that
\begin{align*}
L(\rho):=\rho\sup_{x>0}\theta(x)<\infty.
\end{align*}
Thus we derive that for any $x>0$,
\begin{align}\label{cyp3.14}
\mathcal{L}\big(e^{\alpha (t-T_1)}V(x)\big)\leq  L(\rho)e^{\alpha (t-T_1)}.
\end{align}
First, for any  $\hat{I}^{\varepsilon}(T_1)=\hat{x}>0$, let $k_0$ be sufficiently large for  $\hat{x}$ staying within the interval $(1/k_0,k_0)$. Then for any $k>k_0$, define the stopping time
{$$\hat{\tau}^{\varepsilon}_k=\inf\left\{t\geq T_1,~\hat{I}^{\varepsilon}(t)\leq \frac{1}{k}\right\}.$$
Note that $\hat{\tau}^{\varepsilon}_{k}$ }is monotonically increasing {as $k\rightarrow\infty$} and its (finite or infinite) limit is denoted by {$\hat{\tau}^{\varepsilon}_{\infty}$}. Similar to Theorem \ref{th3.1}, we can prove that {$\hat{\tau}^{\varepsilon}_{\infty}=\infty$ a.s.} Making use of the local martingale property implies that
{$\mathbb{E}\big[M_{V}(t\wedge\hat{\tau}^{\varepsilon}_{k})\big]=0$}. Thus, for any $t\geq T_1$,
\begin{align}\label{cypp3.16}
\mathbb{E}_{\hat{x}}\Big[e^{\alpha(t\wedge\hat{\tau}^{\varepsilon}_{k}-T_1)}
V\big(\hat{I}^{\varepsilon}(t)\big)\Big]=V(\hat{x})+\mathbb{E}_{\hat{x}}\Big[\int_{T_1}^{t\wedge\hat{\tau}^{\varepsilon}_{k}}\mathcal{L}\Big(e^{\alpha (s-T_1)}V\big(\hat{I}^{\varepsilon}(s)\big)\Big)\mathrm{d}s\Big].
\end{align}
It follows from the definition of $\hat{\tau}^{\varepsilon}_{k}$ that $e^{\alpha (t\wedge\hat{\tau}^{\varepsilon}_{k}-T_1)}\big(1+1/\hat{I}^{\varepsilon}(t\wedge\hat{\tau}^{\varepsilon}_{k})\big)^{\rho}$ is monotonically increasing {as $k\rightarrow\infty$}. Then letting $k\rightarrow\infty$ indicates that
$$e^{\alpha (t\wedge\hat{\tau}^{\varepsilon}_{k}-T_1)}\Big(1+\frac{1}{\hat{I}^{\varepsilon}(t\wedge\hat{\tau}^{\varepsilon}_{k})}\Big)^{\rho}\uparrow e^{\alpha (t-T_1)}\Big(1+\frac{1}{\hat{I}^{\varepsilon}(t)}\Big)^{\rho}~~\mathrm{a.s.}$$
Employing the monotone convergence theorem shows that
\begin{align}\label{cyp3.16}
\lim_{k\rightarrow\infty}\mathbb{E}_{\hat{x}}\Big[e^{\alpha (t\wedge\hat{\tau}^{\varepsilon}_{k}-T_1)}\Big(1+\frac{1}{\hat{I}^{\varepsilon}(t\wedge\hat{\tau}^{\varepsilon}_{k})}\Big)^{\rho}\Big]\uparrow \mathbb{E}_{\hat{x}}\Big[e^{\alpha (t-T_1)}\Big(1+\frac{1}{\hat{I}^{\varepsilon}(t)}\Big)^{\rho}\Big].
\end{align}
On the other hand, by  \eqref{cyp3.14} we deduce that
\begin{align}\label{cyp3.17}
\mathbb{E}_{\hat{x}}\Big[\int_{T_1}^{t\wedge\hat{\tau}^{\varepsilon}_{k}}\mathcal{L}\Big(e^{\alpha (s-T_1)}V\big(\hat{I}^{\varepsilon}(s)\big)\Big)\mathrm{d}s\Big]\leq \int_{T_1}^{t}L(\rho)e^{\alpha (s-T_1)}\mathrm{d}s\leq \frac{1}{\alpha} L(\rho)e^{\alpha (t-T_1)}.
\end{align}
Then letting $k\rightarrow\infty$ in \eqref{cypp3.16} and utilizing \eqref{cyp3.16} and \eqref{cyp3.17} we derive that
\begin{align*}
e^{\alpha (t-T_1)}\mathbb{E}_{\hat{x}}\Big[\Big(1+\frac{1}{\hat{I}^{\varepsilon}(t)}\Big)^{\rho}\Big]\leq \Big(1+\frac{1}{\hat{x}}\Big)^{\rho}+\frac{1}{\alpha} L(\rho)e^{\alpha (t-T_1)},
\end{align*}
which implies that
\begin{align*}
\mathbb{E}_{\hat{x}}\Big[\big(\hat{I}^{\varepsilon}(t)\big)^{-\rho}\Big]\leq \mathbb{E}_{\hat{x}}\Big[\Big(1+\frac{1}{\hat{I}^{\varepsilon}(t)}\Big)^{\rho}\Big]\leq \Big(1+\frac{1}{\hat{x}}\Big)^{\rho}e^{-\alpha (t-T_1)}+\frac{1}{\alpha}L(\rho).
\end{align*}
Therefore, for any $I_0>0$ and $\hat{I}^{\varepsilon}(T_1)=I(T_1)$, making use of the Markov property and { Chapman-Kolmogorov equation} derives that
\begin{align*}
\mathbb{E}\Big[\big(\hat{I}^{\varepsilon}(t)\big)^{-\rho}\Big]
 &= \int_{\RR_{+}}\mathbb{E}_{\hat{x}}\Big[\big(\hat{I}^{\varepsilon}(t)\big)^{-\rho}\Big]
\mathbb{P}_{I_0}\big(I(T_1)\in \mathrm{d}\hat{x}\big)\nn\
\\&\leq e^{-\alpha (t-T_1)}\int_{\RR_{+}}\Big(1+\frac{1}{\hat{x}}\Big)^{\rho}\mathbb{P}_{I_0}\big(I(T_1)\in \mathrm{d}\hat{x}\big)+\frac{1}{\alpha}L(\rho)\nn\
\\&\leq e^{-\alpha (t-T_1)}\mathbb{E}_{I_0}\Big[\Big(1+\frac{1}{I(T_1)}\Big)^{-\rho}\Big]+\frac{1}{\alpha}L(\rho),
\end{align*}
which implies that for any $I_0>0$ and $\hat{I}^{\varepsilon}(T_1)=I(T_1)$,
\begin{align*}
{ \limsup_{t\rightarrow\infty}}\mathbb{E}\Big[\big(\hat{I}^{\varepsilon}(t)\big)^{-\rho}\Big]\leq \frac{1}{\alpha}L(\rho).
\end{align*}
Thus {there exists a constant $T_2:=T_2(I_0,T_1)>T_1$ such that}
\begin{align*}
\mathbb{E}\Big[\big(\hat{I}^{\varepsilon}(t)\big)^{-\rho}\Big]\leq \frac{1}{\alpha}L(\rho)+1=:L(\alpha,\rho),~~\forall t\geq T_2.
\end{align*}

\underline{Step 3.} By the comparison theorem we obtain the lower  and upper bounds of $I(t)$ in probability.
Utilizing the Chebyshev inequality indicates that
\begin{align*}
\mathbb{P}\left(\hat{I}^{\varepsilon}(t)\leq
\Big(\frac{\varepsilon}{3L(\alpha,\rho)}\Big)^{\frac{1}{\rho}}\right)
&=\mathbb{P}\left( {(\hat{I}^{\varepsilon}(t))^{-1}}\geq
\Big(\frac{3L(\alpha,\rho)}{\varepsilon}\Big)^{\frac{1}{\rho}}\right)\leq \frac{\varepsilon\mathbb{E}\big[\big(\hat{I}^{\varepsilon}(t)\big)^{-\rho}\big]}{3L(\alpha,\rho)}\leq \frac{\varepsilon}{3},~~\forall t\geq T_2,
\end{align*}
which together with \eqref{cyL3.36} yields that
\begin{align}\label{cyp3.19}
\mathbb{P}\left(I(t)\leq\Big(\frac{\varepsilon}{3L(\alpha,\rho)}\Big)^{\frac{1}{\rho}}\right)
&=\mathbb{P}\left(\Big\{I(t)\leq\Big(\frac{\varepsilon}{3L(\alpha,\rho)}\Big)^{\frac{1}{\rho}}\Big\}\cap \Omega_\varepsilon \right)+\mathbb{P}\left(\Big\{I(t)\leq\Big(\frac{\varepsilon}{3L(\alpha,\rho)}\Big)^{\frac{1}{\rho}}\Big\}\cap \Omega^{c}_\varepsilon \right)\nn\
\\&\leq \mathbb{P}\left(\hat{I}^{\varepsilon}(t)\leq
\Big(\frac{\varepsilon}{3L(\alpha,\rho)}\Big)^{\frac{1}{\rho}}\right)+\mathbb{P}( \Omega^{c}_\varepsilon)\leq\frac{\varepsilon}{3}+\frac{\varepsilon}{3}\nn\\
&\leq \frac{2\varepsilon}{3},~~\forall t\geq T_2.
\end{align}
On the other hand, by \eqref{cyp3.3} and Lemma \ref{l1}, for any fixed $p>0$ {there exists a constant $T>T_2$} such that
\begin{align*}
\mathbb{E}\big[(I(t))^{p}\big]\leq \mathbb{E}\big[(\check{I}(t))^{p}\big]\leq C_p,~~\forall t\geq T.
\end{align*}
Choosing a constant $H_1>0$ with $H_1^p>3C_p/\varepsilon$, then utilizing the Chebyshev inequality,  we have
\begin{align}\label{cyp3.20}
\mathbb{P}\big(I(t)\geq H_1\big)\leq \frac{\mathbb{E}\big[(I(t))^{p}\big]}{H_1^{p}}\leq \frac{C_{p}}{H_1^{p}}\leq\frac{\varepsilon}{3},~~\forall t\geq T.
\end{align}
Define  $H=\big(3L(\alpha,\rho)/\varepsilon\big)^{1/p}\vee H_1 $. Using \eqref{cyp3.19} and \eqref{cyp3.20} we obtain that
\begin{align*}
\mathbb{P}\Big(\frac{1}{H}< I(t)< H\Big)&\geq1-\mathbb{P}\Big(I(t)\leq \frac{1}{H}\Big)-\mathbb{P}\big(I(t)\geq H\big)\nn\
\\&\geq1-\frac{2\varepsilon}{3}-\frac{\varepsilon}{3}=1-\varepsilon,~~\forall t\geq T.
\end{align*}
The proof is complete.
\end{proof}
 \begin{theorem}\label{LL1}
For any initial value $(I_0,U_0)\in \mathbb{R}^{2,\circ}_{+}$,   the following assertions hold.
\begin{itemize}
 \item[$(1)$] {If $\lambda_I< 0$ $\big(\lambda_U<0\big)$,
 $
 \lim_{t\rightarrow\infty}I(t)=0~\left(\lim_{t\rightarrow\infty}U(t)=0\right)~~\mathrm{a.s.}
 $}
 \item[$(2)$]If $\lambda_U<0<\lambda_I$,
$
 \lim\limits_{t\rightarrow \infty}\displaystyle \frac{\ln U(t)}{t}= -d_U\frac{\lambda_I}{d_{I}}-\delta_U-\frac{\sigma_U^2}{2}~~ \mathrm{a.s.}
 $
 and  the random  occupation measure $\mathbf{\Pi}^{t}(\cdot)$ converges weakly to $\mu_I\times\boldsymbol{\delta}_{0}$ as $t\rightarrow\infty$ a.s. Furthermore, the probability distribution $\mathbb{P}_{(I_0,U_0)}(I(t)\in \cdot)$  converges weakly to $\mu_I$ as $t\rightarrow\infty$.
 \end{itemize}
  \end{theorem}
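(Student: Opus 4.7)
The plan is to leverage the boundary analysis already developed. For part (1), Lemma \ref{L3.4} directly handles the case $\lambda_I<0$. For the parenthetical $\lambda_U<0$ statement, the comparison \eqref{cyp3.3} gives $0\leq U(t)\leq \check{U}(t)$ a.s., and Lemma \ref{l1}(2) yields $\check{U}(t)\to 0$ a.s.\ whenever $\lambda_U<0$, so $U(t)\to 0$ a.s.

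For part (2), because $\lambda_U<0$ the preceding case gives $\lim_{t\to\infty}U(t)=0$ a.s.\ for every $(I_0,U_0)\in\RR^{2,\circ}_+$. This is precisely the hypothesis of Lemma \ref{L3.3}, so $I(t)$ remains in a compact sub-interval of $\RR_+^\circ$ with arbitrarily high probability. With these two facts I would re-run Steps 2 and 3 of the proof of Lemma \ref{L3}, now \emph{without} any smallness restriction on the initial data: Lemma \ref{mL3.6} says $\{\mathbf{\Pi}^t(\cdot)\}$ is tight a.s.\ with every weak limit an invariant measure of $(I(t),U(t))$; since $U(t)\to 0$ a.s., every such limit is supported on $\RR_+\times\{0\}$, and by the Khas'minskii-type decomposition \cite[p.121]{Kha} must take the convex form $\theta(\mu_I\times\boldsymbol{\delta}_0)+(1-\theta)\boldsymbol{\delta}_{(0,0)}$. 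Passing to the limit in \eqref{cypp3.7} via the uniform integrability supplied by Lemma \ref{l1}(1) and \cite[Lemma 3.1]{Hening2018} yields $\lim_{t\to\infty}\ln I(t)/t=(1-\theta)\lambda_I$ a.s.; comparing with \eqref{cyp3.3} and \eqref{me3.8}, which together force $\limsup_{t\to\infty}\ln I(t)/t\leq 0$, gives $\theta=1$, so $\mathbf{\Pi}^t(\cdot)$ converges weakly to $\mu_I\times\boldsymbol{\delta}_0$ a.s. Plugging this into \eqref{eq_yhf1} exactly as in Step 3 of Lemma \ref{L3} delivers the exponent $\lambda=-d_U\lambda_I/d_I-\delta_U-\sigma_U^2/2$.

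The remaining task is to upgrade the pathwise occupation-measure convergence to weak convergence of the one-time law $\mathbb{P}_{(I_0,U_0)}(I(t)\in\cdot)$ to $\mu_I$. The plan here is: the uniform moment bound \eqref{cor3.34} makes $\{\mathbb{P}_{(I_0,U_0)}((I(t),U(t))\in\cdot)\}_{t\geq 0}$ tight, so any subsequential weak limit $\nu$ exists; since $U(t)\to 0$ a.s., $\nu$ is supported on $\RR_+\times\{0\}$, and Lemma \ref{L3.3} forbids the first marginal from charging $\{0\}$ or escaping to infinity. Passing the Markov-Feller transition semigroup (Theorem \ref{th3.1}) through the weak limit identifies $\nu$ as an invariant measure for $(I(t),U(t))$, whose first marginal is therefore an invariant probability measure in $\RR_+^\circ$ for the boundary diffusion $\check{I}$; by Lemma \ref{l1}(3) that marginal is uniquely $\mu_I$. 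Since every subsequential limit equals $\mu_I\times\boldsymbol{\delta}_0$, the full family converges.

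The main obstacle I expect is this last identification step: ensuring that the a.s.\ convergence $U(t)\to 0$, together with Lemma \ref{L3.3}'s quantitative non-extinction of $I$, actually rules out mass leaking to $\{0\}$ or to infinity in the \emph{time-marginal} limit (as opposed to the time-average limit, already handled via the occupation measure). The cleanest route is to combine Lemma \ref{L3.3}'s estimate $\mathbb{P}(1/H < I(t) < H) > 1-\varepsilon$ with the invariance of the Markov-Feller transition semigroup under weak limits, ensuring that the limit measure's first marginal is an invariant probability for $\check{I}$ charging only $\RR_+^\circ$, which pins it down as $\mu_I$.
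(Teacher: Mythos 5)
Parts (1) and the Lyapunov-exponent/occupation-measure portion of part (2) of your proposal are correct and essentially match the paper: the paper gets $U(t)\to 0$ from the exponent bound \eqref{cyp3.8} rather than from the comparison with $\check U$, but both routes are valid, and your identification of $\theta=1$ followed by the computation of $\lim_{t\to\infty}\ln U(t)/t$ via \cite[Lemma 3.1]{Hening2018} is exactly the paper's argument.

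The genuine gap is in your final step, where you claim that ``passing the Markov--Feller transition semigroup through the weak limit identifies $\nu$ as an invariant measure.'' This is false for one-time marginals. If $\mathbb{P}_{(I_0,U_0)}\big((I(t_k),U(t_k))\in\cdot\big)\to\nu$ weakly along some $t_k\to\infty$, the Feller property only yields that $\nu P_s$ equals the weak limit of $\mathbb{P}_{(I_0,U_0)}\big((I(t_k+s),U(t_k+s))\in\cdot\big)$, i.e.\ a subsequential limit along a \emph{different} time sequence, which need not coincide with $\nu$ unless you already know all subsequential limits agree --- precisely what you are trying to prove. Invariance of weak limits is a property of the Ces\`aro averages $\frac{1}{t}\int_0^t P_s\,\mathrm{d}s$ (the Krylov--Bogoliubov construction used in Theorem \ref{L3.6}, and the content of Lemma \ref{mL3.6} for the random occupation measures), not of $P_t$ itself; a deterministic rotation, whose time marginals never converge, shows the step fails in general. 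The paper closes this last step by a two-time-scale conditioning argument instead: it fixes $T_2$ so that the boundary law $\mathbb{P}_{I_0}(\check I(T_2)\in\cdot)$ is within $\varepsilon$ of $\mu_I$ (Lemma \ref{l1}(3), giving \eqref{cyp3.52}), uses the Feller property to make $\mathbb{E}_{(I_0',U_0')}f(I(T_2))$ uniformly close to $\mathbb{E}_{(I_0',0)}f(I(T_2))$ for $(I_0',U_0')$ in the compact set $\mathbb{D}_\delta$, shows via Lemma \ref{L3.3} and $U(t)\to 0$ that $(I(t-T_2),U(t-T_2))\in\mathbb{D}_\delta$ with probability at least $1-2\varepsilon$, and then concludes by the Markov property, conditioning at time $t-T_2$. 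You already have every ingredient for this argument --- you invoke Lemma \ref{L3.3} and the Feller property --- but the mechanism you substitute for it (invariance of subsequential one-time limits) does not hold, so this portion of the proof needs to be replaced by the conditioning argument.
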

  \begin{proof}
  (1) One notices that $(I(t),U(t))$ takes values in $ \RR^{2,\circ}_+$ and  $U(t)/(I(t)+U(t))\leq 1$. Then we derive from \eqref{cypp3.6}-\eqref{eq_yhf1} and \eqref{cypp3.26} that
 \begin{align}\label{cyp3.8}
 \limsup_{t\rightarrow\infty}\frac{\ln I(t)}{t}
\leq \lambda_I~~\mathrm{a.s.},~~\limsup_{t\rightarrow\infty}\frac{\ln U(t)}{t}
\leq \lambda_U~~\mathrm{a.s.},
 \end{align}
which implies the desired results.

  (2) Thanks to $\lambda_U<0$,  it follows from \eqref{cyp3.8} directly that
   \begin{align}\label{L3.33}
  \lim\limits_{t\rightarrow\infty}U(t)=0~~\mathrm{a.s.}
  \end{align}
Due to $\lambda_I>0$,  in light of Lemma \ref{mL3.6} and \eqref{L3.33},  any weak limit of  random occupation measure family $\{\mathbf{\Pi}^{t}(\cdot)\}_{t\geq0}$ denoted by $\pi_2$  must have the form
$${\pi_2=\theta(\mu_I\times\boldsymbol{\delta}_0)+(1-\theta)\boldsymbol{\delta}_{(0,0)}~~\mathrm{a.s.},}$$
where  $0\leq \theta\leq 1$~a.s. Using the similar techniques to  Lemma \ref{L3},
 we prove by contradiction that $\theta=1$ a.s. To avoid the duplication we omit the proof details.
Therefore the  random occupation measure  $\mathbf{\Pi}^{t}(\cdot)$ converges weakly to $\mu_I\times\boldsymbol{\delta}_0$ as $t\rightarrow\infty$ a.s.
By this weak  convergence  and  \eqref{l3.16}, \eqref{L3.8} as well as \eqref{L3.33}, we derive from  \cite[Lemma 3.1]{Hening2018} that
\begin{align*}
&\lim_{t\rightarrow\infty}\frac{1}{t}\int_{0}^{t}\Big[\frac{b_{U}U(s)}{I(s)+U(s)}-d_U\big(I(s)+U(s)\big)\Big]\mathrm{d}s\nn\
\\=&\lim_{t\rightarrow\infty}\int_{\mathbb{R}^{2}_{+}}\Big(\frac{b_{U}y}{x+y}-d_U(x+y)\Big)\mathbf{\Pi}^{t}(\mathrm{d}x,\mathrm{d}y)\nn\
\\=&\int_{\mathbb{R}^{2}_{+}}\Big(\frac{b_{U}y}{x+y}-d_U (x+y)\Big) \mu_I\times\boldsymbol{\delta}_0\big(\mathrm{d}x,\mathrm{d}y\big)=-d_U\frac{\lambda_I}{d_{I}}~~\mathrm{a.s.}
\end{align*}
Then  by letting $t\rightarrow\infty$,  it follows from \eqref{eq_yhf1} and \eqref{cypp3.26} that
\begin{align*}
\lim_{t\rightarrow\infty}\frac{\ln U(t)}{t}&=\lim_{t\rightarrow\infty}\frac{1}{t}\int_{0}^{t}\Big[\frac{b_{U}U(s)}{I(s)+U(s)}-d_U\big(I(s)+U(s)\big)\Big]\mathrm{d}s-\delta_U-\frac{\sigma_U^2}{2}\nn\
\\&= -d_U\frac{\lambda_I}{d_{I}}-\delta_U-\frac{\sigma^2_{U}}{2}~~\mathrm{a.s.}
\end{align*}
 which implies the first desired result.

 In what follows, we prove that for any $(I_0,U_0)\in \RR^{2,\circ}_{+}$, the probability distribution {$\mathbb{P}_{(I_0,U_0)}(I(t)\in \cdot)$  converges weakly to $\mu_I$,}  that is, we  need to show that for any  continuous function $f$ on $\RR_{+}$ with $\sup_{x\in \RR_{+}}|f(x)|\leq 1$,
 \begin{align}\label{CL3.52}
 \lim_{t\rightarrow\infty}\Big|\mathbb{E}_{(I_0,U_0)}f(I(t))-\bar{f}_{I}\Big|=0,~~\forall (I_0,U_0)\in \RR^{2,\circ}_{+},
 \end{align}
where {$\bar{f}_{I}:=\int_{\RR_{+}}f(x)\mu_{I}(\mathrm{d}x)$.} In view of Lemma \ref{L3.3},  for any $\varepsilon>0$ and $(I_0,U_0)\in \RR^{2,\circ}_{+}$, there exist  constants $T_1>0$ and $H_1>1$ such that
\begin{align}\label{CYL3.52}
\mathbb{P}_{(I_0,U_0)}\Big(\frac{1}{H_1}< I(t)<H_1\Big)\geq1-\varepsilon,~~\forall t\geq T_1.
\end{align}
 For any $\delta\geq0$, define a set
 \begin{align*}
 \mathbb{D}_{\delta}:=\Big\{(x,y)\in \RR^2_{+}: \frac{1}{H_1}< x<H_1, y\leq\delta \Big\}.
 \end{align*}
According to  Lemma \ref{l1} (3), for any $I_0>0$, the probability distribution $\mathbb{P}_{I_0}\big(\check{I}(t)\in \cdot\big)$ converges weakly to $\mu_I$ as $t\rightarrow\infty$. Thus  there exists a constant $T_2>0$ such that
 \begin{align}\label{cyp3.52}
 \big|\mathbb{E}_{(I_0,0)}f(I(T_2))-\bar{f}_{I}\big|=\big|\mathbb{E}_{I_0}f(\check{I}(T_2))-\bar{f}_{I}\big|\leq \varepsilon.
 \end{align}
 Thanks to the Feller property of solution process $(I(t),U(t))$, there exists a  small enough constant $\delta=\delta(\varepsilon)$ such that for any $(I_0,U_0), (I_0',U_0')\in \mathbb{R}^{2,\circ}_{+}$ with $ (I_0-I'_0)^2+(U_0-U'_0)^2 \leq \delta^2$,
 \begin{align}\label{cyp3.53}
 \big|\mathbb{E}_{(I_0,U_0)}f(I(T_2),U(T_2))-\mathbb{E}_{(I'_0,U'_0)}f(I(T_2),U(T_2))\big|\leq \varepsilon.
 \end{align}
 As a result, combining \eqref{cyp3.52} and \eqref{cyp3.53} implies that
 \begin{align}\label{cyp3.55}
 \big|\mathbb{E}_{(I_0,U_0)}f(I(T_2))-\bar{f}_{I}\big|&\leq \big|\mathbb{E}_{(I_0,U_0)}f(I(T_2))-\mathbb{E}_{(I_0,0)}f(I(T_2))\big|
 +\big|\mathbb{E}_{(I_0,0)}f(I(T_2))-\bar{f}_{I}\big|\nn\
 \\&\leq 2\varepsilon,~~\forall (I_0,U_0)\in \mathbb{D}_{\delta}.
 \end{align}
 In addition, it follows from \eqref{L3.33} that for any $(I_0,U_0)\in \RR^{2,\circ}_{+}$, there is a constant $T_3\geq T_1$ such that
 \begin{align*}
 \mathbb{P}_{(I_0,U_0)}\big(U(t)\leq \delta\big)>1- \varepsilon, ~~\forall t\geq T_3.
 \end{align*}
 This together with \eqref{CYL3.52} yields that for any $(I_0,U_0)\in \RR^{2,\circ}_{+}$,
 \begin{align}\label{cyp3.56}
 \mathbb{P}_{(I_0,U_0)}\Big(\big(I(t),U(t)\big)\notin \mathbb{D}_{\delta}\Big)\leq 2\varepsilon, ~~\forall t\geq T_3.
 \end{align}
 Then for any $t\geq T_2+T_3$, by the homogeneous Markov property, \eqref{cyp3.55}~and \eqref{cyp3.56}  we deduce that
\begin{align*}
 \big|\mathbb{E}_{(I_0,U_0)}f(I(t))-\bar{f}_{I}\big|
 &=\mathbb{E}\Big[\mathbb{E}_{(I_0,U_0)}\Big(f(I(t))-\bar{f}_{I}
 \big|\mathcal{F}_{t-T_2}\Big)\Big]\nn\
 \\&=\mathbb{E}\Big[\mathbb{E}_{(I(t-T_2),U(t-T_2))}\Big(f(I(T_2))-\bar{f}_{I}\Big)\Big]\nn\
 \\& \leq \mathbb{E}\Big[\mathbb{E}_{(I(t-T_2),U(t-T_2))}\Big(f(I(T_2))-\bar{f}_{I}\Big)
 \mathbb{I}_{\{(I(t-T_2),U(t-T_2))\in \mathbb{D}_{\delta}\}}\Big]\nn\
 \\&~~~+\mathbb{E}\Big[\mathbb{E}_{(I(t-T_2),U(t-T_2))}\Big(f(I(T_2))-\bar{f}_{I}\Big)
 \mathbb{I}_{\{(I(t-T_2),U(t-T_2))\notin \mathbb{D}_{\delta}\}}\Big]\nn\
 \\&\leq 2\varepsilon+2(2\varepsilon)\leq 6\varepsilon,
 \end{align*}
which implies the desired result.
  \end{proof}
{\begin{cor}\label{cor1}
If $\lambda_U<0<\lambda_I$, then infected mosquito population is almost surely stochastically persistent.
\end{cor}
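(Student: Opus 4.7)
The plan is to deduce the stochastic persistence of $I(t)$ directly from the a.s.\ weak convergence of the random occupation measure $\mathbf{\Pi}^{t}(\cdot)$ to $\mu_{I}\times\boldsymbol{\delta}_{0}$ already established in Theorem \ref{LL1}(2). By Definition \ref{def1}, what must be shown is that for every $\varepsilon>0$ there exists $\eta>0$ such that, uniformly in the initial value $(I_{0},U_{0})\in\RR^{2,\circ}_{+}$,
\[
\limsup_{t\to\infty}\mathbf{\Pi}^{t}\bigl((0,\eta)\times\RR_{+}\bigr)<\varepsilon\quad\mathrm{a.s.}
\]

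First I would fix $\varepsilon>0$ and exploit the structure of $\mu_I$. Since $\mu_I=Ga(q_I,\beta_I)$ has a continuous Lebesgue density $f_{q_I,\beta_I}$ on $\RR_{+}^{\circ}$ and gives no mass to $\{0\}$, one may pick $\eta=\eta(\varepsilon)>0$ so small that $\mu_I([0,\eta])<\varepsilon/2$. Next, to reduce the unbounded strip $(0,\eta)\times\RR_{+}$ to a bounded rectangle, I would invoke the tightness of $\{\mathbf{\Pi}^{t}\}_{t\geq 0}$ — more precisely the uniform $\omega$-wise control $\frac{1}{t}\int_{0}^{t}(I(s)+U(s))\,\mathrm{d}s\leq C(\omega)$ obtained inside the proof of Lemma \ref{mL3.6} — to pick, a.s., a constant $M=M(\omega,\varepsilon)>0$ such that
\[
\sup_{t\geq 0}\mathbf{\Pi}^{t}\bigl(\RR_{+}\times(M,\infty)\bigr)\leq \frac{C(\omega)}{M}<\frac{\varepsilon}{2}.
\]

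With $\eta$ and $M$ in hand, I would decompose
\[
(0,\eta)\times\RR_{+}\subset \bigl([0,\eta]\times[0,M]\bigr)\cup\bigl(\RR_{+}\times(M,\infty)\bigr).
\]
The first set is closed in $\RR^{2}_{+}$, hence the Portmanteau theorem applied to the a.s.\ weak convergence $\mathbf{\Pi}^{t}\Rightarrow\mu_I\times\boldsymbol{\delta}_{0}$ gives
\[
\limsup_{t\to\infty}\mathbf{\Pi}^{t}\bigl([0,\eta]\times[0,M]\bigr)\leq(\mu_I\times\boldsymbol{\delta}_{0})\bigl([0,\eta]\times[0,M]\bigr)=\mu_I([0,\eta])<\frac{\varepsilon}{2}.
\]
Adding the two bounds yields $\limsup_{t\to\infty}\mathbf{\Pi}^{t}((0,\eta)\times\RR_{+})<\varepsilon$ a.s., which is precisely the stochastic persistence of the infected mosquito population.

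The main subtlety is that $(0,\eta)\times\RR_{+}$ is neither open with compact closure nor closed in $\RR^{2}_{+}$, so Portmanteau cannot be applied to it directly; the tightness truncation at height $M$ is what makes the argument go through. It is also worth noting that $\eta$ depends only on $\varepsilon$ (since it is fixed by $\mu_I$ alone), whereas $M$ is allowed to depend on the sample path $\omega$, which is consistent with the a.s.\ formulation in Definition \ref{def1}.
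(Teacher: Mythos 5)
Your proof is correct and follows essentially the same route as the paper: both arguments choose $\eta$ so that $\mu_I\big((0,\eta)\big)<\varepsilon$ and then pass to the limit in $\mathbf{\Pi}^{t}\big((0,\eta)\times\RR_{+}\big)$ using the a.s.\ weak convergence $\mathbf{\Pi}^{t}\Rightarrow\mu_I\times\boldsymbol{\delta}_{0}$ from Theorem \ref{LL1}(2). Your extra truncation at height $M$ is harmless but not actually needed, since the boundary of $(0,\eta)\times\RR_{+}$ in $\RR^{2}_{+}$ is contained in $\{0,\eta\}\times\RR_{+}$, which is a null set for $\mu_I\times\boldsymbol{\delta}_{0}$, so the strip is already a continuity set of the limit measure and the Portmanteau theorem applies to it directly.
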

\begin{proof}
For any $\varepsilon>0$, we can choose a constant $\eta=\eta(\varepsilon)>0$ small enough such that
\begin{align*}
\mu_I\times\boldsymbol{\delta}_0\big((0,\eta)\times \RR_{+}\big)=\mu_{I}\big((0,\eta)\big)=\frac{\beta_I ^{q_I }}{\Gamma(q_I )}\int_{0}^{\eta}x^{q_I -1}e^{-\beta_I  x}\mathrm{d}x<\varepsilon.
\end{align*}
In view of Theorem \ref{LL1} (2), we know that $\mathbf{\Pi}^{t}(\cdot)$ converges weakly to $\mu_I\times\boldsymbol{\delta}_{0}$ as $t\rightarrow\infty$  $\mathrm{a.s.}$ Thus we derive that
\begin{align*}
\lim_{t\rightarrow\infty}\mathbf{\Pi}^{t}((0,\eta)\times \RR_{+})=\mu_{I}\times \boldsymbol{\delta}_0\big((0,\eta)\times \RR_{+}\big)<\varepsilon,
\end{align*}
which implies the desired result.
\end{proof}}
\begin{theorem}\label{LL3.11}
If  $0<\lambda_U$ and $\lambda_I<\lambda_U-b_U$,
 \begin{align*}
 \lim\limits_{t\rightarrow \infty}\frac{\ln I(t)}{t}=\lambda_I-d_I\frac{\lambda_U}{d_U}~~ \mathrm{a.s.}
 \end{align*}
 and the  random  occupation measure $\mathbf{\Pi}^{t}(\cdot)$ converges weakly to $\boldsymbol{\delta}_{0}\times \mu_U$  as $t\rightarrow\infty$ a.s.
 \end{theorem}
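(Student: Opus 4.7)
The plan is to mirror the proof of Theorem~\ref{LL1}(2), interchanging the roles of $I$ and $U$, and to organize the argument into three stages.

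\textbf{Stage 1 (Extinction of $I$).} Observe that $\lambda_{U}-b_{U}=-\delta_{U}-\sigma_{U}^{2}/2\le 0$, so the hypothesis $\lambda_{I}<\lambda_{U}-b_{U}$ forces $\lambda_{I}<0$. Lemma~\ref{L3.4} then yields $\lim_{t\to\infty}I(t)=0$ a.s.\ for every $(I_{0},U_{0})\in\mathbb{R}^{2,\circ}_{+}$, and \eqref{cyp3.8} sharpens this to $\limsup_{t\to\infty}\ln I(t)/t\le\lambda_{I}<0$, so $I(t)$ in fact decays at least exponentially fast.

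\textbf{Stage 2 (Classification of weak limits).} Lemma~\ref{mL3.6} yields the tightness of $\{\mathbf{\Pi}^{t}(\cdot)\}_{t\ge 0}$ a.s.\ and identifies every weak limit with an invariant probability measure of $(I,U)$. Combining this with Stage 1 (which rules out any mass on $\mathbb{R}_{+}^{\circ}\times\{0\}$) and the classification in Theorem~\ref{L3.6}, every weak limit is of the form
\begin{equation*}
\pi_{*}=\theta\,(\boldsymbol{\delta}_{0}\times\mu_{U})+(1-\theta)\,\boldsymbol{\delta}_{(0,0)}
\end{equation*}
for some (possibly random) $\theta\in[0,1]$.

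\textbf{Stage 3 ($\theta=1$ and conclusion).} I would then establish $\theta=1$ by a contradiction argument modeled on Step 2 of Lemma~\ref{L3}. Writing $b_{U}U/(I+U)=b_{U}-b_{U}I/(I+U)$ and invoking \cite[Lemma~3.1]{Hening2018} together with the uniform integrability in \eqref{cor3.34}, weak convergence gives
\begin{equation*}
\lim_{t\to\infty}\frac{\ln U(t)}{t}=(1-\theta)\lambda_{U}-b_{U}\,\eta\quad\mathrm{a.s.},\qquad \eta:=\int_{\mathbb{R}_{+}^{2}}\frac{x}{x+y}\,d\pi_{*}(x,y)\in[0,\,1-\theta].
\end{equation*}
The comparison $U(t)\le\check{U}(t)$ and \eqref{me3.8} (valid because $\lambda_{U}>0$) give $\limsup_{t\to\infty}\ln U(t)/t\le 0$, so $b_{U}\eta\ge(1-\theta)\lambda_{U}$. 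To rule out $\theta<1$, I would exploit the hypothesis $\lambda_{I}<\lambda_{U}-b_{U}$ through the drift of $\ln(I/U)$, namely $(\lambda_{I}-\lambda_{U})+b_{U}I/(I+U)-(d_{I}-d_{U})(I+U)$: time-averaging this expression against $\pi_{*}$ and comparing with $\limsup_{t\to\infty}\ln I(t)/t\le\lambda_{I}$ from \eqref{cyp3.8}, the strict sign $\lambda_{I}-\lambda_{U}+b_{U}<0$ forces the inconsistency $\eta=0$, hence $\theta=1$. Once $\theta=1$, the weak limit of $\mathbf{\Pi}^{t}$ is uniquely $\boldsymbol{\delta}_{0}\times\mu_{U}$ a.s.\ and substituting into \eqref{cypp3.7} via \cite[Lemma~3.1]{Hening2018}, \eqref{cypp3.6} and \eqref{L3.8} yields
\begin{equation*}
\lim_{t\to\infty}\frac{\ln I(t)}{t}=\lambda_{I}-d_{I}\int_{\mathbb{R}_{+}^{2}}(x+y)\,(\boldsymbol{\delta}_{0}\times\mu_{U})(dx,dy)=\lambda_{I}-\frac{d_{I}\lambda_{U}}{d_{U}}\quad\mathrm{a.s.}
\end{equation*}

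\textbf{Main obstacle.} The delicate point is closing Stage 3: handling the singularity of the integrand $b_{U}y/(x+y)$ at the corner $(0,0)$, and extracting from the hypothesis $\lambda_{I}<\lambda_{U}-b_{U}$ a genuine contradiction to $\theta<1$. Because only the $U$-equation carries the asymmetric cytoplasmic-incompatibility factor, the naive invasion threshold $\lambda_{I}<d_{I}\lambda_{U}/d_{U}$ that would merely secure the sign of the final Lyapunov exponent is not strong enough to pin down $\theta$, which is precisely why the stricter hypothesis $\lambda_{I}<\lambda_{U}-b_{U}$ is imposed.
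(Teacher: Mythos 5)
Your Stages 1 and 2 match the paper, and your final computation of $\lim_{t\to\infty}\ln I(t)/t$ given $\theta=1$ is correct. The gap is in Stage 3, which is where the entire difficulty of this theorem lives, and your proposed mechanism does not close it. First, \cite[Lemma~3.1]{Hening2018} (weak convergence plus uniform integrability) cannot be applied to the integrand $x/(x+y)$: it is discontinuous at the origin, and the candidate limit $\pi_{*}$ charges the origin precisely when $\theta<1$, so the time-average $\frac{1}{t}\int_{0}^{t}\frac{I(s)}{I(s)+U(s)}\,\mathrm{d}s$ is not controlled by weak convergence at all --- you flag this as the ``main obstacle'' but never resolve it. Second, even if you grant yourself a limit $\eta\in[0,1-\theta]$, the constraints you can actually extract, namely $b_{U}\eta\ge(1-\theta)\lambda_{U}$ and $\eta\le 1-\theta$, are mutually consistent for every $\theta\in[0,1]$ because $\lambda_{U}=b_{U}-\delta_{U}-\sigma_{U}^{2}/2<b_{U}$; and the drift of $\ln(I/U)$ gives no independent information, since $\lim_{t\to\infty}\ln(I(t)/U(t))/t$ is just the difference of the two Lyapunov exponents you already computed and has no a priori sign when both populations may decay. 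So no contradiction to $\theta<1$ is forthcoming from your scheme.

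The paper closes this step by a pathwise rate comparison rather than by weak convergence. It introduces the auxiliary process $\hat{U}^{\varepsilon}$ of \eqref{cyp3.34}, which minorizes $U$ once $I(t)<\varepsilon$, and proves in Lemma~\ref{L3.5} that its Lyapunov exponent equals $-\delta_{U}-d_{U}\varepsilon-\sigma_{U}^{2}/2=\lambda_{U}-b_{U}-d_{U}\varepsilon$. On an event of probability at least $1-\varepsilon$ one then has $U(t)\ge e^{(\lambda_{U}-b_{U}-2d_{U}\varepsilon)t}$ while $I(t)\le e^{(\lambda_{I}+d_{U}\varepsilon)t}$; the hypothesis $\lambda_{I}<\lambda_{U}-b_{U}$ enters exactly here, making the exponent gap $\lambda_{U}-b_{U}-\lambda_{I}-3d_{U}\varepsilon$ positive, whence \eqref{Cl3.63} gives $\limsup_{t\to\infty}\frac{1}{t}\int_{0}^{t}\frac{I(s)}{I(s)+U(s)}\,\mathrm{d}s=0$ (i.e.\ your $\eta=0$) without ever integrating the singular function against $\pi_{*}$. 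Only then does the $\liminf_{t\to\infty}\ln U(t)/t\le 0$ argument force all the mass onto $\boldsymbol{\delta}_{0}\times\mu_{U}$. To repair your proof you would need to supply this lower-bound construction (or an equivalent quantitative control of $U$ from below), which amounts to proving Lemma~\ref{L3.5}; it is not a routine adaptation of Theorem~\ref{LL1}(2), where the corresponding singular term $U/(I+U)$ is simply bounded by $1$ with the favorable sign.
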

To prove  Theorem \ref{LL3.11}, we introduce an auxiliary process $\hat{U}^{\varepsilon}(t)$ and give its property. For any $\varepsilon>0$,  let $\hat{U}^{\varepsilon}(t)$  satisfy that
\begin{align}\label{cyp3.34}
\mathrm{d}\hat{U}^{\varepsilon}(t)=\hat{U}^{\varepsilon}(t)\left(\frac{b_U\hat{U}^{\varepsilon}(t)}{\varepsilon+\hat{U}^{\varepsilon}(t)}-\delta_U-d_{U}\varepsilon-d_U\hat{U}^{\varepsilon}(t)\right)\mathrm{d}t+\sigma_{U}\hat{U}^{\varepsilon}(t)\mathrm{d}B_{2}(t)
\end{align}
with $\hat{U}^{\varepsilon}(0)=U_0>0$. In a similar way as shown in the proof of Theorem \ref{th3.1}, system \eqref{cyp3.34} has a unique global solution $\hat{U}^{\varepsilon}(t)\in \mathbb{R}^{\circ}_{+}$ on $t\geq 0$ and its solution is a Markov-Feller process. Moreover, for any $\varepsilon>0$ applying the comparison theorem
 \cite[Thoerem 1.1, p.352]{N1989} yields that  for  any $t\geq0$,
\begin{align}\label{m3.45}
\hat{U}^{\varepsilon}(t)\leq \check{U}(t)~~\mathrm{a.s.}
\end{align}
 Next we go a further to give the asymptotic  property of $\hat{U}^{\varepsilon}(t)$.
\begin{lemma}\label{L3.5}
For any $\varepsilon\in (0,\delta_U/2b_{U})$ and {$U_0>0$,}
\begin{align*}
\lim_{t\rightarrow\infty}\frac{\ln \hat{U}^{\varepsilon}(t)}{t}=-\delta_U-d_U\varepsilon-\frac{\sigma_U^2}{2}~~\mathrm{a.s.}
\end{align*}
\end{lemma}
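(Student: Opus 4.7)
Applying $\mathrm{It\hat{o}}$'s formula to $\ln\hat{U}^{\varepsilon}$ in \eqref{cyp3.34} gives
\begin{align*}
\frac{\ln\hat{U}^{\varepsilon}(t)}{t}
=\frac{\ln U_{0}}{t}
-\delta_{U}-d_{U}\varepsilon-\frac{\sigma_{U}^{2}}{2}
+\frac{1}{t}\int_{0}^{t}\psi\bigl(\hat{U}^{\varepsilon}(s)\bigr)\,\mathrm{d}s
+\frac{\sigma_{U}B_{2}(t)}{t},
\end{align*}
where $\psi(x):=\frac{b_{U}x}{\varepsilon+x}-d_{U}x$ and $\psi(0)=0$. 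The SLLN for Brownian motion kills the boundary and martingale remainders, so the lemma reduces to showing that the time-average of $\psi(\hat{U}^{\varepsilon})$ vanishes a.s.\ as $t\to\infty$.

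For the lower bound on $\liminf\ln\hat{U}^{\varepsilon}(t)/t$, since $\frac{b_{U}x}{\varepsilon+x}\ge 0$, the drift of $\hat{U}^{\varepsilon}$ dominates that of the logistic SDE $\mathrm{d}Z=Z(-\delta_{U}-d_{U}\varepsilon-d_{U}Z)\,\mathrm{d}t+\sigma_{U}Z\,\mathrm{d}B_{2}$ with the same diffusion coefficient, so by the comparison theorem $\hat{U}^{\varepsilon}(t)\ge Z(t)$ a.s., while \eqref{m3.45} gives $\hat{U}^{\varepsilon}(t)\le\check{U}(t)$. Since $-\delta_{U}-d_{U}\varepsilon-\sigma_{U}^{2}/2<0$, Lemmas \ref{l1}(2) and \ref{LC3.2} identify the Lyapunov exponent of $Z$ as exactly $-\delta_{U}-d_{U}\varepsilon-\sigma_{U}^{2}/2$, and hence $\liminf_{t\to\infty}\ln\hat{U}^{\varepsilon}(t)/t\ge -\delta_{U}-d_{U}\varepsilon-\sigma_{U}^{2}/2$ a.s.

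The matching $\limsup$ requires the extinction property $\hat{U}^{\varepsilon}(t)\to 0$ a.s., and this is the main obstacle. I would attempt a stochastic Lyapunov analysis with $V(x)=x^{p}$ for small $p\in(0,1)$: the bound $\frac{b_{U}x}{\varepsilon+x}\le\min\{b_{U},b_{U}x/\varepsilon\}$ together with the uniform moment estimate $\sup_{t}\mathbb{E}(\hat{U}^{\varepsilon}(t))^{p+1}<\infty$ (inherited from $\hat{U}^{\varepsilon}\le\check{U}$ and Lemma \ref{l1}(1)) yields a dissipation inequality of the form $\mathrm{d}\mathbb{E}V/\mathrm{d}t\le -\alpha\,\mathbb{E}V+\beta$. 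The restriction $\varepsilon<\delta_{U}/(2b_{U})$ is precisely what allows one to refine $V$ (e.g.\ by a two-scale weight treating $x\le\varepsilon$ and $x>\varepsilon$ separately) so as to close the estimate with $\beta=0$, thereby forcing $\mathbb{E}V(\hat{U}^{\varepsilon}(t))\to 0$ exponentially; the Markov property and a Borel--Cantelli argument along a geometric time grid then upgrade this to the a.s.\ statement. An alternative route is Feller's boundary classification of the one-dimensional diffusion $\hat{U}^{\varepsilon}$: the speed density behaves like $x^{-2-2(\delta_{U}+d_{U}\varepsilon)/\sigma_{U}^{2}}$ near $0$, hence is non-integrable, identifying $0$ as an attainable transient boundary and giving the same conclusion.

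With $\hat{U}^{\varepsilon}(t)\to 0$ a.s.\ established, continuity of $\psi$ at $0$ combined with $|\psi(x)|\le(b_{U}/\varepsilon+d_{U})x$ and the dominating process $\check{U}$ make $\psi(\hat{U}^{\varepsilon})$ Cesaro-negligible, so that $\frac{1}{t}\int_{0}^{t}\psi(\hat{U}^{\varepsilon}(s))\,\mathrm{d}s\to 0$ a.s., and substitution into the decomposition yields the claimed limit. The delicate point throughout is the extinction step: the ratio $\frac{b_{U}x}{\varepsilon+x}$ produces a non-monotone drift coefficient for which the crude bounds $\le b_{U}$ or $\le b_{U}x/\varepsilon$ in isolation are insufficient, and one must combine them with the precise restriction $\varepsilon<\delta_{U}/(2b_{U})$ to engineer a Lyapunov function (or an equivalent boundary classification) that rigorously forces extinction.
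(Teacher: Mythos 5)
Your overall decomposition is right, and you have correctly isolated the crux: everything reduces to showing that the Ces\`aro average of $\psi(\hat U^{\varepsilon})$ vanishes, for which a.s.\ convergence $\hat U^{\varepsilon}(t)\to 0$ suffices. But your primary route to that extinction statement --- a global dissipation inequality $\mathrm{d}\mathbb{E}V/\mathrm{d}t\le-\alpha\,\mathbb{E}V$ with $V(x)=x^{p}$ (possibly two-scale weighted) --- has a genuine gap. Compute $\mathcal{L}x^{p}=px^{p}\bigl(\tfrac{b_{U}x}{\varepsilon+x}-\delta_{U}-d_{U}\varepsilon-d_{U}x-\tfrac{(1-p)\sigma_{U}^{2}}{2}\bigr)$. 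In the regime where this lemma is actually invoked (Theorem \ref{LL3.11} assumes $\lambda_{U}>0$, i.e.\ $b_{U}>\delta_{U}+\sigma_{U}^{2}/2$), the bracket is strictly positive on a whole interval of moderate $x$ where $x/(\varepsilon+x)$ is close to $1$ and $d_{U}x$ is still small; the process has an interior quasi-equilibrium near $\lambda_{U}/d_{U}$ and is metastable there. No choice of $p$ or of a two-scale weight closes the estimate with $\beta=0$, and the restriction $\varepsilon<\delta_{U}/(2b_{U})$ is not what does the work globally: it only controls the drift in the region $x\lesssim\varepsilon^{2}$, where $\tfrac{b_{U}x}{\varepsilon+x}\le b_{U}\varepsilon<\delta_{U}/2$. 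That local bound is precisely how the paper proceeds: it starts the process at $U_{0}<\varepsilon^{2}e^{-M}$, shows via a stopping time and the pathwise exponential representation that $\hat U^{\varepsilon}$ never exceeds $\varepsilon^{2}$ and decays like $U_{0}e^{-\delta_{U}t/2}$ on an event of probability $\ge 1-\delta$, concludes that $\hat U^{\varepsilon}$ is transient with $\boldsymbol{\delta}_{0}$ as its unique invariant measure, and then upgrades to arbitrary $U_{0}>0$ through tightness of the occupation measures $\hat\Pi^{t,\varepsilon}$ (inherited from $\hat U^{\varepsilon}\le\check U$) and \cite[Lemma 3.1]{Hening2018}. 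Your sketch contains neither the localization near zero nor this occupation-measure upgrade, and the moment inequality you propose in their place is false as stated.

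Your one-line alternative via boundary classification is actually the closer-to-viable route, but it too is mis-stated: the relevant criterion for $\hat U^{\varepsilon}(t)\to 0$ a.s.\ from every $U_{0}>0$ is that the scale function satisfies $S(0+)>-\infty$ (near $0$ the scale density behaves like $x^{2(\delta_{U}+d_{U}\varepsilon)/\sigma_{U}^{2}}$, which is integrable) together with $S(\infty)=\infty$ (forced by the $-d_{U}x$ term at infinity), not the non-integrability of the speed density; and the boundary $0$ is \emph{unattainable} in finite time, not ``attainable''. If you carry out that scale-function argument properly you get $\hat U^{\varepsilon}(t)\to0$ a.s.\ for all $U_{0}>0$ directly, after which your final Ces\`aro step (continuity of $\psi$ at $0$, $\psi(0)=0$, eventual boundedness) is fine and arguably cleaner than the paper's occupation-measure detour; the comparison with the logistic process $Z$ for the lower bound then becomes redundant, since the exact limit drops out of the It\^o decomposition.
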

\begin{proof}
In view of \eqref{cypp3.26}, for any $\delta>0$, there is a set $\Omega_1\subset\Omega$ and a constant $T_1:=T_1(\delta)>0$  such that $\mathbb{P}(\Omega_1)>1-\delta/2$, where
\begin{align}\label{me3.37}
\Omega_1=\Big\{\omega\in \Omega: B_{2}(t)\leq \frac{ d_U \varepsilon t}{ \sigma_U},~~\forall t\geq T_1\Big\}.
\end{align}
Then we  choose a constant $M>b_UT_1$ sufficiently large such that $\mathbb{P}(\Omega_2)>1-\delta/2$, where
\begin{align}\label{me3.38}
\Omega_2=\Big\{\omega\in \Omega:~|\sigma_U B_2(t)|\leq M-b_UT_1, ~~\forall 0\leq t\leq T_1\Big\}.
\end{align}
Using the It\^o formula for \eqref{cyp3.34} yields 
\begin{align}\label{me3.39}
\hat{U}^{\varepsilon}(t)&=U_0\exp\left\{\int_{0}^{t}\frac{b_U \hat{U}^{\varepsilon}(s)}{\varepsilon+\hat{U}^{\varepsilon}(s)}\mathrm{d}s-\delta_Ut-d_U\varepsilon t-\frac{\sigma_U^2 t}{2}-d_U\int_{0}^{t}\hat{U}^{\varepsilon}(s)\mathrm{d}s+\sigma_UB_2(t)\right\}.
\end{align}
Let $U_0\in (0, \varepsilon^2e^{-M})$. Then  combining \eqref{me3.38} and \eqref{me3.39} implies that
\begin{align*}
\hat{U}^{\varepsilon}(t)\leq U_0\exp\Big(b_Ut+\sigma_U B_2(t)\Big)\leq U_0 e^{M}\leq \varepsilon^2
\end{align*}
 for any $0\leq t\leq T_1$, $U_0\in (0, \varepsilon^2e^{-M})$ and $\omega\in \Omega_2$. Define the stopping time
 $$\hat{\tau}=\inf\big\{t\geq0: \hat{U}^{\varepsilon}(t)>\varepsilon^2\big\}.$$
 Obviously,  for any  $U_0\in (0, \varepsilon^2e^{-M})$ and $\omega\in \Omega_2$, $\hat{\tau}>T_1$. And $\hat{U}^{\varepsilon}(t)\leq \varepsilon^2$ for any $0\leq t\leq \hat{\tau}$. Owing to the increasing of $y/(\varepsilon+y)$ with respect to $y>0$, we have
 \begin{align*}
 \frac{\hat{U}^{\varepsilon}(t)}{\varepsilon+\hat{U}^{\varepsilon}(t)}\leq \frac{\varepsilon^2}{\varepsilon+\varepsilon^2}\leq \varepsilon,~~ 0\leq t\leq \hat{\tau}.
 \end{align*}
This together with \eqref{me3.37} implies that for any  $U_0\in (0, \varepsilon^2e^{-M})$,
 \begin{align*}
\hat{ U}^{\varepsilon}(t)&\leq U_0\exp\left\{\int_{0}^{t}\frac{b_U \hat{U}^{\varepsilon}(s)}{\varepsilon+\hat{U}^{\varepsilon}(s)}\mathrm{d}s-\delta_Ut \right\}\nn\
 \\&\leq U_0e^{(b_U\varepsilon-\delta_U)t}, ~~\forall \omega\in \Omega_1\cap\Omega_2,~t\in [T_1,\hat{\tau}).
 \end{align*}
Due to $\varepsilon\in (0,\delta_U/2b_U)$,  for any  $U_0\in (0, \varepsilon^2e^{-M})$,  we derive that
 \begin{align*}
 \hat{U}^{\varepsilon}(t)\leq U_0e^{-\delta_U t/{2}}\leq \varepsilon^2e^{-M}<\varepsilon^2, ~~\forall \omega\in \Omega_1\cap\Omega_2,~t\in [T_1,\hat{\tau}).
 \end{align*}
Then  by a similar argument to proving  $\tau=\infty$ in the proof of Lemma \ref{L3}, we deduce that $\hat{\tau}=\infty$ for almost all $\Omega_1\cap \Omega_2$. Therefore,  for any $U_0\in (0, \varepsilon^2e^{-M})$,
\begin{align*}
 \hat{U}^{\varepsilon}(t)\leq U_0e^{-\delta_U t/{2}}~~\hbox{for~almost~all}~ \omega\in \Omega_1\cap \Omega_2,~\hbox{and}~t\geq T_1,
\end{align*}
which implies that for any $U_0\in (0, \varepsilon^2e^{-M})$,
\begin{align*}
\mathbb{P}_{U_0}\Big(\lim_{t\rightarrow\infty}U^{\varepsilon}(t)=0\Big)\geq \mathbb{P}\big(\Omega_1\cap\Omega_2\big)\geq1-\delta.
\end{align*}
Thus, the process $\hat{U}^{\varepsilon}(t)$ is transient and has no invariant measure in $\mathbb{R}^{\circ}_{+}$.  This indicates that  $\boldsymbol{\delta}_0$ is the unique invariant measure of $U^{\varepsilon}(t)$.
 Then define the random occupation measure of $\hat{U}^{\varepsilon}(t)$
\begin{align*}
\hat{\Pi}^{t,\varepsilon}(\cdot):=\frac{1}{t}\int_{0}^{t}\mathbb{I}_{\{\hat{U}^{\varepsilon}(s)\in \cdot\}}\mathrm{d}s.
\end{align*}
 Due to $\lambda_U>0$, using \eqref{l3.16} and \eqref{m3.45} we have
\begin{align}\label{ml3.54}
\limsup_{t\rightarrow\infty}\frac{1}{t}\int_{0}^{t}\big(\hat{U}^{\varepsilon}(s)\big)^{p}\mathrm{d}s\leq \lim_{t\rightarrow\infty}\frac{1}{t}\int_{0}^{t}\big(\check{U}(s)\big)^{p}\mathrm{d}s<\infty~~\mathrm{a.s.}
\end{align}
which implies that the random occupation measure family $\{\hat{\Pi}^{t,\varepsilon}(\cdot)\}_{t\geq0}$ is tight a.s.
 By {\cite[Lemma 5.7]{ning2018}}, any weak-limit of $\hat{\Pi}^{t,\varepsilon}(\cdot)$ (if it exists) is an invariant probability measure of  $\hat{U}^{\varepsilon}(t)$. Because $\boldsymbol{\delta}_{0}$ is the unique invariant probability measure of $\hat{U}^{\varepsilon}(t)$,  $\hat{\Pi}^{t,\varepsilon}(\cdot)$  converges weakly to $\boldsymbol{\delta}_{0}$ a.s. Using this weak convergence  and  the uniform integrability in \eqref{ml3.54} as well as \cite[Lemma 3.1]{Hening2018} we deduce that
\begin{align}\label{cypp3.24}
\lim_{t\rightarrow\infty}\frac{1}{t}\int_{0}^{t}\frac{\hat{U}^{\varepsilon}(s)}{\varepsilon+\hat{U}^{\varepsilon}(s)}\mathrm{d}s=\lim_{t\rightarrow\infty}\int_{\mathbb{R}_{+}}\frac{y}{\varepsilon+y}\hat{\Pi}^{t,\varepsilon}(\mathrm{d}y)=\int_{\mathbb{R}_{+}}\frac{y}{\varepsilon+y}\boldsymbol{\delta}_{0}(\mathrm{d}y)=0~~\mathrm{a.s.}
\end{align}
and
\begin{align}\label{cypp3.25}
\lim_{t\rightarrow\infty}\frac{1}{t}\int_{0}^{t}\hat{U}^{\varepsilon}(s)\mathrm{d}s=\lim_{t\rightarrow\infty}\int_{\mathbb{R}_{+}}y\hat{\Pi}^{t,\varepsilon}(\mathrm{d}y)=\int_{\mathbb{R}_{+}}y\boldsymbol{\delta}_{0}(\mathrm{d}y)=0~~\mathrm{a.s.}
\end{align}
On the other hand, using the $\mathrm{It\hat{o}}$ formula yields that
\begin{align*}
\frac{\ln \hat{U}^{\varepsilon}(t)}{t}=\frac{U_0}{t}+\frac{1}{t}\int_{0}^{t}\frac{b_U\hat{U}^{\varepsilon}(s)}{\varepsilon+\hat{U}^{\varepsilon}(s)}\mathrm{d}s-\delta_U-d_{U}\varepsilon-\frac{\sigma_U^2}{2}-d_U\frac{1}{t}\int_{0}^{t}\hat{U}^{\varepsilon}(s)\mathrm{d}s+\frac{\sigma_U B_{2}(t)}{t}~~\mathrm{a.s.}
\end{align*}
Then letting $t\rightarrow\infty$, we derive from \eqref{cypp3.26}, \eqref{cypp3.24} and \eqref{cypp3.25}  that
\begin{align*}
\lim_{t\rightarrow\infty}\frac{\ln \hat{U}^{\varepsilon}(t)}{t}=-\delta_U-d_U\varepsilon-\frac{\sigma_U^2}{2}~~\mathrm{a.s.}
\end{align*}
The proof is complete.
\end{proof}
\\{\bf Proof of Theorem \ref{LL3.11} }
{ Due to $\lambda_I<\lambda_U-b_U=-\delta_U-\sigma_U^2/2$, applying Lemma \ref{L3.4} yields that
  \begin{align}\label{L3.7}
  \lim\limits_{t\rightarrow\infty}I(t)=0~~\mathrm{a.s.}
  \end{align}
Then for any  {$\varepsilon\in \big(0, ((\lambda_U-b_U-\lambda_I)/3d_U)\wedge (\delta_U/2b_U)\big)$,} there exists a subset $\Omega_1\subset\Omega$ and a constant $T_1:=T_1(\varepsilon)$ such that $\mathbb{P}(\Omega_1)>1-\varepsilon/2$, where
\begin{align*}
\Omega_1=\big\{\omega\in \Omega: I(t)<\varepsilon,~\forall t\geq T_1\big\}.
\end{align*}
Then let the  process $\hat{U}^{\varepsilon}(t)$ defined by \eqref{cyp3.34} start at time $T_1$ with $\hat{U}^{\varepsilon}(T_1)=U(T_1)$. Invoking the comparison theorem \cite[Thoerem 1.1, p.352]{N1989} yields that
\begin{align}\label{CL3.45}
\hat{U}^{\varepsilon}(t) \leq U(t),~~\forall \omega\in \Omega_{1},~~\forall t\geq T_1.
\end{align}
According to Lemma \ref{L3.5}, there exists a subset $\Omega_2\subset \Omega$ and a constant $T_2:=T_2(\varepsilon )\geq T_1$ such that  $\mathbb{P}(\Omega_2)>1-\varepsilon/2$, where
\begin{align}\label{me3.52}
\Omega_2=\Big\{\omega\in \Omega: \hat{U}^{\varepsilon}(t)\geq e^{(\lambda_U-b_U-2d_U \varepsilon) t},~~\forall t\geq T_2\Big\}.
\end{align}
Combining \eqref{CL3.45} and \eqref{me3.52} yields that
\begin{align}\label{eqq1}
 U(t)\geq e^{(\lambda_U-b_U-2d_U \varepsilon)t}, ~~\forall \omega\in \Omega_1\cap\Omega_2,~~\forall t\geq T_2.
\end{align}
On the other hand, using \eqref{cyp3.8} implies that there exists a constant $T_3:=T_3(\varepsilon,\omega)\geq  T_2$ such that for any $t\geq T_{3}$,
\begin{align}\label{eqq2}
I(t)\leq e^{(\lambda_I+d_U\varepsilon) t} ~~\mathrm{a.s.}
\end{align}
By virtue of \eqref{eqq1}, \eqref{eqq2} and the fact $\lambda_U-b_U-\lambda_I-3d_U\varepsilon>0$, we derive that for almost all $\omega\in \Omega_1\cap\Omega_2$}
\begin{align}\label{Cl3.63}
\limsup_{t\rightarrow\infty}\frac{1}{t}\int_{0}^{t}\frac{I(s)}{I(s)+U(s)}\mathrm{d}s
 &\leq \limsup_{t\rightarrow\infty}\frac{1}{t}\int_{T_3}^{\infty}\frac{I(s)}{I(s)+U(s)}\mathrm{d}s\nn\
\\&\leq\limsup_{t\rightarrow\infty}\frac{1}{t}\int_{T_3}^{\infty}\frac{I(s)}{I(s)+\hat{U}^{\varepsilon}(s)}\mathrm{d}s\nn\
\\& \leq\limsup_{t\rightarrow\infty}\frac{1}{t}\int_{T_3}^{\infty}\frac{e^{(\lambda_I+d_U \varepsilon) s}}{e^{(\lambda_I+d_{U}\varepsilon) s}+e^{(\lambda_U-b_U-2d_U \varepsilon)s }}\mathrm{d}s\nn\
\\&= \limsup_{t\rightarrow\infty}\frac{1}{t}\int_{T_3}^{\infty}
\frac{1}{1+e^{(\lambda_U-b_U-\lambda_I-3d_U\varepsilon)s }}\mathrm{d}s=0.
\end{align}
On the other hand,  using \eqref{L3.7},  Lemma \ref{mL3.6} and Theorem \ref{L3.6}, we know that any weak limit of the random occupation measure family $\{\mathbf{\Pi}^{t}(\cdot)\}_{t\geq0}$ denoted by $\pi_3$ has the form  $\pi_3=\theta\boldsymbol{\delta}_{(0,0)}+(1-\theta)(\boldsymbol{\delta}_0\times\mu_U)$, $0\leq \theta\leq 1$ a.s.  Due to the weak convergence and the uniform integrability in   \eqref{l3.16} and \eqref{L3.7}, by virtue of  \cite[Lemma 3.1]{Hening2018} we derive that
 \begin{align}\label{m3.56}
 \lim_{t\rightarrow\infty}\frac{1}{t}\int_{0}^{t}(I(s)+U(s))\mathrm{d}s&=\lim_{t\rightarrow\infty}\int_{\mathbb{R}^2_{+}}(x+y)\mathbf{\Pi}^{t}(\mathrm{d}x,\mathrm{d}y)\nn\
 \\ &=\int_{\mathbb{R}^2_{+}}(x+y)\pi_{3}(\mathrm{d}x,\mathrm{d}y)=(1-\theta)\frac{\lambda_U}{d_U}~~\mathrm{a.s.}
 \end{align}
 Then letting $t\rightarrow\infty$ in \eqref{eq_yhf1} and  invoking \eqref{cypp3.26}, \eqref{Cl3.63}  and \eqref{m3.56}, we deduce that for almost all $\omega\in \Omega_1\cap\Omega_2$,
 \begin{align}\label{me3.57}
 \liminf_{t\rightarrow\infty}\frac{\ln U(t)}{t}= &\liminf_{t\rightarrow\infty}\frac{1}{t}
\int_{0}^{t}\frac{b_UU(s)}{I(s)+U(s)}\mathrm{d}s -\delta_U-\frac{\sigma_U^2}{2}-d_U\lim_{t\rightarrow\infty}
\frac{1}{t}\int_{0}^{t}\big(I(s)+U(s)\big)\mathrm{d}s\nn\
\\ \geq& b_{U}-\limsup_{t\rightarrow\infty}\frac{1}{t}
\int_{0}^{t}\frac{b_UI(s)}{I(s)+U(s)}\mathrm{d}s -\delta_U-\frac{\sigma_U^2}{2}-(1-\theta)\lambda_U = \theta \lambda_U.
 \end{align}
 On the other hand, utilizing  \eqref{cyp3.3} and \eqref{me3.8}  implies that
 \begin{align}
\liminf_{t\rightarrow\infty}\frac{\ln U(t)}{t}\leq \lim_{t\rightarrow\infty}\frac{\ln \check{U}(t)}{t}= 0~~\mathrm{a.s.}\nn\
\end{align}
{Combining this  and \eqref{me3.57} derives that $\theta=0$~for almost all $\omega\in \Omega_1\cap\Omega_2$. According to the definitions of $\Omega_1$ and $\Omega_2$, we know that $\mathbb{P}(\Omega_1\cap\Omega_2)>1-\varepsilon$. Since $\varepsilon$ is an arbitrarily small constant, the random occupation measure $\mathbf{\Pi}^{t}(\cdot)$ must converge weakly to $\boldsymbol{\delta}_0\times\mu_U$ as $t\rightarrow\infty$ a.s.} Then it follows from \eqref{cypp3.6}, \eqref{cypp3.7} and \eqref{m3.56} with $\theta=0$ a.s. that
\begin{align*}
\lim_{t\rightarrow\infty}\frac{\ln I(t)}{t}=\lambda_I-d_I\frac{\lambda_U}{d_U}~~\mathrm{a.s.}
\end{align*}
The proof is complete.\qed
  {\begin{cor}\label{cor2}
 If $0<\lambda_U$ and $\lambda_I<\lambda_U-b_U$, then  uninfected mosquito population is almost surely stochastically persistent.
  \end{cor}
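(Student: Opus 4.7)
The plan is to mirror the argument already used for Corollary~\ref{cor1}, swapping the roles of the two species and invoking Theorem~\ref{LL3.11} in place of Theorem~\ref{LL1}(2). Specifically, under the hypotheses $0<\lambda_U$ and $\lambda_I<\lambda_U-b_U$, Theorem~\ref{LL3.11} already establishes that the random occupation measure $\mathbf{\Pi}^{t}(\cdot)$ converges weakly to the product measure $\boldsymbol{\delta}_0\times\mu_U$ on $\RR_{+}^2$ as $t\to\infty$, almost surely. All the required machinery is therefore in place, and what remains is to translate this weak convergence into the quantitative smallness statement demanded by Definition~\ref{def1}.

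Concretely, given $\varepsilon>0$, I would first exploit the explicit Gamma form $\mu_U=Ga(q_U,\beta_U)$ furnished by Lemma~\ref{l1}(3). Since $q_U>0$, the density $f_{q_U,\beta_U}(y)=\beta_U^{q_U}y^{q_U-1}e^{-\beta_U y}/\Gamma(q_U)$ is integrable on any neighborhood of $0$, so by dominated convergence (or by direct estimation of the incomplete Gamma integral) one can select $\eta=\eta(\varepsilon)>0$ small enough that
\begin{align*}
\boldsymbol{\delta}_0\times\mu_U\big(\RR_{+}\times(0,\eta)\big)
=\mu_U\big((0,\eta)\big)
=\frac{\beta_U^{q_U}}{\Gamma(q_U)}\int_{0}^{\eta}y^{q_U-1}e^{-\beta_U y}\mathrm{d}y
<\varepsilon.
\end{align*}
Then, invoking the weak convergence $\mathbf{\Pi}^{t}(\cdot)\Rightarrow \boldsymbol{\delta}_0\times\mu_U$ from Theorem~\ref{LL3.11}, I would conclude
\begin{align*}
\limsup_{t\to\infty}\mathbf{\Pi}^{t}\big(\RR_{+}\times(0,\eta)\big)
\leq \boldsymbol{\delta}_0\times\mu_U\big(\RR_{+}\times(0,\eta)\big)<\varepsilon\quad\mathrm{a.s.},
\end{align*}
which is exactly the stochastic persistence of the uninfected population required by Definition~\ref{def1}.

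The only delicate point is that $\RR_{+}\times(0,\eta)$ is neither open nor closed in $\RR_{+}^2$, so weak convergence does not a priori give convergence of its $\mathbf{\Pi}^{t}$-mass; one must appeal to the Portmanteau theorem via the fact that its topological boundary, namely $\RR_{+}\times\{0\}\cup\RR_{+}\times\{\eta\}$, carries no mass under $\boldsymbol{\delta}_0\times\mu_U$. This holds because $\mu_U$ is absolutely continuous on $\RR_{+}^{\circ}$ and hence has no atoms at $0$ or $\eta$. Modulo this routine continuity-set verification, the corollary follows at once from Theorem~\ref{LL3.11}, and the proof should be a short parallel of the one given for Corollary~\ref{cor1}.
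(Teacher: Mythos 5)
Your proposal is correct and is essentially the argument the paper intends: Corollary~\ref{cor2} is stated without proof precisely because it is the mirror image of Corollary~\ref{cor1}, obtained by combining the weak convergence $\mathbf{\Pi}^{t}(\cdot)\Rightarrow\boldsymbol{\delta}_0\times\mu_U$ from Theorem~\ref{LL3.11} with the explicit Gamma form of $\mu_U$ to make $\mu_U\big((0,\eta)\big)<\varepsilon$. Your extra remark on the Portmanteau/continuity-set issue is a sound (and slightly more careful) justification of a step the paper passes over silently.
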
}
 \par {On the other hand,  one notices from Lemma \ref{L3} that solutions starting nearby $U(0)=0$ are attracted to the boundary $(0,\infty)\times \{0\}$.
Then we go a further step to explore the asymptotic behaviors of  \eqref{e2} under  $\lambda_U>0$ and $\lambda_I>0$.}
\begin{theorem}\label{t1}
If $\lambda_I/d_I>\lambda_U/d_U>0$, then for any initial value  $(I_0,U_0)\in \mathbb{R}^{2,\mathrm{o}}_{+}$
\begin{align*}
\lim_{t\rightarrow\infty}\frac{\ln U(t)}{t}=-d_U\frac{\lambda_I}{d_I} -\delta_U-\frac{\sigma_U^2}{2}~~\mathrm{a.s.}
\end{align*}
and the random occupation measure $\mathbf{\Pi}^{t}(\cdot)$ converges weakly to $\mu_I\times\boldsymbol{\delta}_0$ as $t\rightarrow\infty$ a.s. Furthermore, the probability distribution $\mathbb{P}_{(I_{0},U_0)}(I(t)\in \cdot)$ converges weakly  to $\mu_I$ as $t\rightarrow\infty$.
\end{theorem}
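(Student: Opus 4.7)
The plan is to mimic the invariant-measure/occupation-measure strategy already used for Theorems \ref{LL1}(2) and \ref{LL3.11}, exploiting the fact that $\lambda_I/d_I>\lambda_U/d_U>0$ is precisely the condition that makes the boundary measure $\mu_I\times\boldsymbol{\delta}_0$ the unique ``invadable'' candidate among those listed in Theorem \ref{L3.6}. Concretely, by Lemma \ref{mL3.6} the family $\{\mathbf{\Pi}^t\}_{t\geq 0}$ is tight a.s., and by Theorem \ref{L3.6} any a.s.\ weak limit $\pi$ must have the form
\begin{equation*}
\pi=l_1(\mu_I\times\boldsymbol{\delta}_0)+l_2(\boldsymbol{\delta}_0\times\mu_U)+l_3\boldsymbol{\delta}_{(0,0)},\qquad l_1+l_2+l_3=1,
\end{equation*}
with coefficients possibly depending on $\omega$. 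I will show $l_2=l_3=0$ a.s., which forces the full family $\mathbf{\Pi}^t$ to converge a.s.\ to the deterministic measure $\mu_I\times\boldsymbol{\delta}_0$.

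To pin down the coefficients I would use the identity \eqref{cypp3.7} for $\ln I(t)/t$. Fix any $\omega$ and any subsequence $t_k\to\infty$ along which $\mathbf{\Pi}^{t_k}$ converges weakly to $\pi$. Using \eqref{cor3.34} for uniform integrability of $x+y$ against $\mathbf{\Pi}^t$, together with \cite[Lemma 3.1]{Hening2018}, one passes to the limit in the occupation integral to get
\begin{equation*}
\lim_{k\to\infty}\frac{\ln I(t_k)}{t_k}=\lambda_I-d_I\!\int_{\mathbb{R}^{2}_+}(x+y)\,\pi(\mathrm{d}x,\mathrm{d}y)=(l_2+l_3)\lambda_I-d_I l_2\tfrac{\lambda_U}{d_U}.
\end{equation*}
On the other hand, the comparison \eqref{cyp3.3} with the boundary process $\check{I}(t)$ together with \eqref{me3.8} yields $\limsup_{t\to\infty}\ln I(t)/t\le 0$ a.s. Combining these two facts gives
\begin{equation*}
(l_2+l_3)\lambda_I\le l_2 d_I\tfrac{\lambda_U}{d_U},
\end{equation*}
which, under $\lambda_I/d_I>\lambda_U/d_U>0$, forces $l_2=l_3=0$ (hence $l_1=1$) a.s. Because every weak limit point of the tight family equals the same deterministic $\mu_I\times\boldsymbol{\delta}_0$, the full family converges a.s.

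Once the weak convergence of $\mathbf{\Pi}^t$ is in hand, the remaining statements follow the blueprint of the proof of Theorem \ref{LL1}(2). Passing to the limit in \eqref{eq_yhf1} via \cite[Lemma 3.1]{Hening2018} (uniform integrability supplied by \eqref{l3.16} applied to the $U$-coordinate, since $y=0$ $\boldsymbol{\delta}_0$-a.s.\ and $b_U y/(x+y)$ vanishes on the support of $\mu_I\times\boldsymbol{\delta}_0$) delivers
\begin{equation*}
\lim_{t\to\infty}\frac{\ln U(t)}{t}=-d_U\frac{\lambda_I}{d_I}-\delta_U-\frac{\sigma_U^2}{2}\quad\mathrm{a.s.},
\end{equation*}
which is negative, so in particular $\lim_{t\to\infty}U(t)=0$ a.s. This last fact activates Lemma \ref{L3.3}, so an $\varepsilon/\delta$ argument using the Feller property of $(I(t),U(t))$, the weak convergence of $\mathbb{P}_{I_0}(\check{I}(t)\in\cdot)$ to $\mu_I$ from Lemma \ref{l1}(3), and the coupling $\mathbb{P}_{(I_0,0)}(I(t)\in\cdot)=\mathbb{P}_{I_0}(\check{I}(t)\in\cdot)$, reproduces verbatim the final portion of the Theorem \ref{LL1}(2) proof to give $\mathbb{P}_{(I_0,U_0)}(I(t)\in\cdot)\Rightarrow\mu_I$.

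The main obstacle is the invasion-type bookkeeping in the second paragraph: one has to verify that the weak-limit random measure $\pi$ has the prescribed three-point convex structure \emph{on the same} $\omega$-set on which the Lyapunov estimate $\limsup\ln I/t\le 0$ holds, and that the uniform integrability needed to pass to the limit inside the occupation integral is genuinely provided by \eqref{cor3.34} rather than requiring a fresh moment bound. Once that is done carefully, the chain of implications is essentially forced by the strict inequality $\lambda_I/d_I>\lambda_U/d_U$.
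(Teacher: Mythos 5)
Your proposal is correct and follows essentially the same route as the paper: tightness of $\mathbf{\Pi}^{t}$ plus the three-point boundary decomposition from Theorem \ref{L3.6}, the identity $\lim\ln I(t)/t=(l_2+l_3)\lambda_I-d_Il_2\lambda_U/d_U$ combined with $\limsup\ln I(t)/t\le 0$ to force $l_2=l_3=0$ under $\lambda_I/d_I>\lambda_U/d_U>0$, and then the Theorem \ref{LL1}(2) machinery for the $U$-exponent and the convergence of $\mathbb{P}_{(I_0,U_0)}(I(t)\in\cdot)$ to $\mu_I$. The only small correction is that the uniform integrability needed to pass to the limit in the pathwise occupation integrals should come from the almost-sure time-average bounds \eqref{l3.16} together with the comparison \eqref{cyp3.3} (as the paper does), not from the moment bound \eqref{cor3.34}, which controls expectations rather than sample paths.
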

\begin{proof}
 According to Lemma \ref{mL3.6} and Theorem \ref{L3.6}, the random occupation measure family $\{\mathbf{\Pi}^{t}(\cdot)\}_{t\geq0}$  is tight  on $\mathbb{R}_{+}^{2}$ a.s. and its any weak limit denoted by $\pi_4$ is of the form
 {$$ \theta_1(\mu_I\times \boldsymbol{\delta}_0)+\theta_2(\boldsymbol{\delta}_0\times \mu_U)+\theta_3\boldsymbol{\delta}_{(0,0)}~~\mathrm{a.s.},$$
   here $0\leq \theta_i\leq 1$~a.s. $i=1,2,3$.} By this weak convergence, \eqref{l3.16} and \cite[Lemma 3.1]{Hening2018}, we deduce that for any $(I_0, U_0)\in \mathbb{R}^{2,\circ}_{+}$,
\begin{align*}
\lim_{t\rightarrow\infty}\frac{1}{t}\int_{0}^{t}\big(I(s)+U(s)\big)\mathrm{d}s&=\lim_{t\rightarrow\infty} \int_{\mathbb{R}^{2}_{+}}(x+y)\mathbf{\Pi}^{t}(\mathrm{d}x,\mathrm{d}y)\nn
\\&=\theta_1 \int_{\mathbb{R}_{+}}x\mu_{I}(\mathrm{d}x)+\theta_2 \int_{\mathbb{R}_{+}}y\mu_{U}(\mathrm{d}y)\nn\\
&=
\theta_1\frac{\lambda_I}{d_I}+\theta_2\frac{\lambda_U}{d_U} ~~\mathrm{a.s.}
\end{align*}
Using \eqref{cypp3.6}, \eqref{cypp3.7} and the fact $\theta_1+\theta_2+\theta_3=1$~a.s. derives that
\begin{align}\label{cyp3.44}
\lim_{t\rightarrow\infty}\frac{\ln I(t)}{t}&=\lambda_I-d_I
\lim_{t\rightarrow\infty}\frac{1}{t}\int_{0}^{t}\big( I(s)+U(s)\big)\mathrm{d}s\nn
\\&= (1-\theta_1)\lambda_I-\theta_2\frac{d_I\lambda_U}{d_U}\nn
\\&=\theta_2\left(\lambda_I-\frac{d_I\lambda_U}{d_{U}}\right)+\theta_3\lambda_I~~\mathrm{a.s.}
\end{align}
On the other hand,
combining \eqref{cyp3.3}, \eqref{me3.8} and \eqref{cyp3.44} indicates that
\begin{align*}
\theta_{2}\Big(\lambda_I-\frac{d_I\lambda_U}{d_{U}}\Big)+\theta_3\lambda_I\leq \lim_{t\rightarrow\infty}\frac{\ln \check{I}(t)}{t} =0~~\mathrm{a.s.}
\end{align*}
 This, together with the fact $\lambda_I/d_I>\lambda_U/d_U>0$, implies that $\theta_2=\theta_3=0$ a.s. Thus, $\theta_1=1$ a.s. As a result, for any $(I_0, U_0)\in \mathbb{R}^{2,\circ}_{+}$, $\mathbf{\Pi}^{t}(\cdot)$ converges weakly to measure $\mu_I\times\boldsymbol{\delta}_0$  as $t\rightarrow\infty$ a.s. By this weak convergence and uniform integrability in  \eqref{l3.16}, it follows from \eqref{eq_yhf1}, \eqref{cypp3.26} and \cite[Lemma 3.1]{Hening2018} that
 \begin{align*}
\lim_{t\rightarrow\infty}\frac{\ln U(t)}{t}&=\lim_{t\rightarrow\infty}\frac{1}{t}
\int_{0}^{t}\left(\frac{b_UU(s)}{I(s)+U(s)} -d_U\big(I(s)+U(s)\big)\right)\mathrm{d}s-\delta_U-\frac{\sigma_U^2}{2}\nn\
\\&=\int_{\RR^2_{+}}\Big(\frac{b_Uy}{x+y}-d_{U}(x+y)\Big)\mu_{I}\times\boldsymbol{\delta}_0(\mathrm{d}x,\mathrm{d}y)-\delta_U-\frac{\sigma_U^2}{2}\nn\
\\&=-d_U\frac{\lambda_I}{d_I}-\delta_U-\frac{\sigma_U^2}{2}~~\mathrm{a.s.},
 \end{align*}
 which implies that $\lim_{t\rightarrow\infty}U(t)=0$ a.s. Then by virtue of Lemmas \ref{l1} and \ref{L3.3},  repeating the argument of proving \eqref{CL3.52} implies the desired result. The proof is complete.
\end{proof}
{\begin{cor}\label{cor3}
If $\lambda_I/d_I>\lambda_U/d_U>0$, then infected mosquito population is almost surely stochastically persistent.
\end{cor}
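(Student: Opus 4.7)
The plan is to mirror the short argument used for Corollary \ref{cor1}, since the conclusion of Theorem \ref{t1} under the hypothesis $\lambda_I/d_I > \lambda_U/d_U > 0$ gives exactly the same weak convergence of the random occupation measure that was available in the $\lambda_U<0<\lambda_I$ case, namely $\mathbf{\Pi}^{t}(\cdot)\to \mu_I\times\boldsymbol{\delta}_0$ a.s.\ as $t\to\infty$. Given Definition \ref{def1}, it will suffice to show that for any $\varepsilon>0$ we can choose $\eta=\eta(\varepsilon)>0$ such that
\begin{equation*}
\limsup_{t\to\infty}\mathbf{\Pi}^{t}\bigl((0,\eta)\times\RR_+\bigr)<\varepsilon\quad\text{a.s.}
\end{equation*}
uniformly in the initial condition $(I_0,U_0)\in\RR^{2,\circ}_+$.

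First, I would appeal to the explicit form of the invariant measure $\mu_I$ on the boundary $\RR_+^\circ\times\{0\}$: by Lemma \ref{l1}(3), $\mu_I$ is the Gamma distribution $Ga(q_I,\beta_I)$ with density $f_{q_I,\beta_I}$. Since $q_I>0$, the density is integrable near zero, so $\eta\mapsto \mu_I((0,\eta))$ is continuous and tends to $0$ as $\eta\downarrow 0$. Hence I can fix $\eta>0$ small enough that
\begin{equation*}
\mu_I\times\boldsymbol{\delta}_0\bigl((0,\eta)\times\RR_+\bigr)=\mu_I\bigl((0,\eta)\bigr)=\frac{\beta_I^{q_I}}{\Gamma(q_I)}\int_0^\eta x^{q_I-1}e^{-\beta_I x}\,\mathrm{d}x<\varepsilon.
\end{equation*}

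Next I would invoke Theorem \ref{t1}, which gives that under the hypothesis $\lambda_I/d_I>\lambda_U/d_U>0$ the random occupation measure $\mathbf{\Pi}^{t}(\cdot)$ converges weakly to $\mu_I\times\boldsymbol{\delta}_0$ almost surely as $t\to\infty$, for every starting point in $\RR^{2,\circ}_+$. Although $(0,\eta)\times\RR_+$ is not a bounded set in $\RR^2_+$ and is open rather than having $\mu_I\times\boldsymbol{\delta}_0$-null boundary, its boundary in the support of $\mu_I\times\boldsymbol{\delta}_0$ is just $\{\eta\}\times\{0\}$, which has $\mu_I\times\boldsymbol{\delta}_0$-mass zero because $\mu_I$ is absolutely continuous. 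Thus the portmanteau theorem gives
\begin{equation*}
\lim_{t\to\infty}\mathbf{\Pi}^{t}\bigl((0,\eta)\times\RR_+\bigr)=\mu_I\times\boldsymbol{\delta}_0\bigl((0,\eta)\times\RR_+\bigr)<\varepsilon\quad\text{a.s.},
\end{equation*}
which is exactly the required almost sure stochastic persistence of the infected population per Definition \ref{def1}.

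The only mildly subtle point I anticipate is the handling of the unbounded strip $(0,\eta)\times\RR_+$ in the portmanteau step; this is already implicit in the proof of Corollary \ref{cor1}, and is resolved by noting that the limit measure $\mu_I\times\boldsymbol{\delta}_0$ is concentrated on the one-dimensional boundary $\RR_+^\circ\times\{0\}$, together with the tightness of $\{\mathbf{\Pi}^{t}(\cdot)\}_{t\geq 0}$ established in Lemma \ref{mL3.6}. Everything else is a direct substitution into the argument already written for Corollary \ref{cor1}, so the proof can be kept very short.
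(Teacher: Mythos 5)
Your proposal is correct and is essentially the paper's own argument: the paper proves Corollary \ref{cor1} by exactly this route (choose $\eta$ with $\mu_I((0,\eta))<\varepsilon$ using the explicit Gamma density, then apply the almost sure weak convergence $\mathbf{\Pi}^{t}(\cdot)\to\mu_I\times\boldsymbol{\delta}_0$), and Corollary \ref{cor3} is left unproved precisely because one substitutes Theorem \ref{t1} for Theorem \ref{LL1}(2) and repeats the computation verbatim. Your extra remark on the portmanteau step for the set $(0,\eta)\times\RR_+$ is a sound clarification of a point the paper glosses over.
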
}
\section{A group of sharp threshold-type conditions}\label{sec-c4}
\par { Collecting serval results presented in separate theorems and corollaries in Section \ref{sec3}, a group of sharp threshold-type conditions  is provided to characterize the dynamical behaviors of stochastic mosquito population model \eqref{e2}.

\begin{theorem}\la{Th_yhf1}
For stochastic mosquito population model \eqref{e2}, let initial value $(I_0,U_0)\in \mathbb{R}^{2,\circ}_{+}$.
\begin{description}
 \item[(A)]
 For $\lambda_U<0$, the following results hold.
 \begin{description}
 \item[(A.1)]If $\lambda_I<0$, then  both infected and uninfected mosquito populations go extinct.
\item[(A.2)]If $\lambda_I>0$,   then  infected mosquito population is almost surely stochastically persistent, and uninfected mosquito population goes  extinct  exponentially fast. Furthermore, the probability distribution $\mathbb{P}_{(I_0,U_0)}\big(I(t)\in \cdot\big)$   converges weakly to $\mu_I$ as $t\rightarrow\infty$.
\end{description}
 \item[(B)] For $\lambda_U>0$, the following results hold.
 \begin{description}
 \item[(B.1)]If ${\lambda_I}/{d_I}>{\lambda_U}/{d_U}$, then infected mosquito population is almost surely stochastically persistent, and uninfected mosquito population goes extinct exponentially fast. Furthermore, the probability distribution $\mathbb{P}_{(I_0,U_0)}\big(I(t)\in \cdot\big)$   converges weakly to $\mu_I$ as $t\rightarrow\infty$.
  \item[(B.2)] If $\lambda_I<\lambda_U-b_U$, then infected mosquito population goes extinct exponentially fast, and uninfected mosquito population is almost surely stochastically persistent.
 \item[(B.3)]If $\lambda_U-b_U\leq\lambda_I<0$ or $0<\lambda_I/d_I\leq \lambda_U/d_U$, then any stationary distribution of $(I(t),U(t))$  has the form of \eqref{l3.30}.
\end{description}
 \end{description}
\end{theorem}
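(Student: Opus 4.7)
The plan is to assemble Theorem \ref{Th_yhf1} from the results already established in Section \ref{sec3}, matching each sub-case to the appropriate earlier theorem or corollary and verifying that the hypotheses activate the corresponding statement and that the conclusions fit the definitions of stochastic persistence and of exponentially fast extinction given in Definition \ref{def1}.

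For Case (A), the governing observation is that $\lambda_U<0$ implies $U(t)\to 0$ a.s.\ by Theorem \ref{LL1}(1). In sub-case (A.1), the additional hypothesis $\lambda_I<0$ triggers the same Theorem \ref{LL1}(1) to yield $I(t)\to 0$ a.s., so both populations go extinct. In sub-case (A.2), with $\lambda_I>0$, Theorem \ref{LL1}(2) supplies the explicit Lyapunov rate $\lim_{t\to\infty}\frac{\ln U(t)}{t}=-d_U\lambda_I/d_I-\delta_U-\sigma_U^2/2<0$, giving exponentially fast extinction of $U$ together with the weak convergence of $\mathbb{P}_{(I_0,U_0)}(I(t)\in\cdot)$ to $\mu_I$. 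Stochastic persistence of the infected population is then an immediate consequence of Corollary \ref{cor1}.

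For Case (B), $\lambda_U>0$ is in force. Sub-case (B.1) corresponds directly to Theorem \ref{t1} combined with Corollary \ref{cor3}: the hypothesis $\lambda_I/d_I>\lambda_U/d_U>0$ in particular forces $\lambda_I>0$, so Theorem \ref{t1} delivers the exponentially fast decay rate for $U$ and the weak convergence of the $I$-marginal to $\mu_I$, while Corollary \ref{cor3} yields persistence of $I$. Sub-case (B.2) is covered by Theorem \ref{LL3.11} paired with Corollary \ref{cor2}: the inequality $\lambda_I<\lambda_U-b_U=-\delta_U-\sigma_U^2/2<0$ automatically forces $\lambda_I<0$, and the Lyapunov exponent produced in Theorem \ref{LL3.11} gives $\lim_{t\to\infty}\frac{\ln I(t)}{t}=\lambda_I-d_I\lambda_U/d_U<0$, which is exponentially fast extinction; persistence of $U$ then follows from Corollary \ref{cor2}.

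The residual sub-case (B.3) lies in the parameter region where sharp persistence/extinction dichotomies are unavailable, and the conclusion correspondingly weakens to a description of any stationary distribution. Both branches of (B.3) satisfy $\lambda_I\neq 0$ — the first has $\lambda_I<0$ and the second $\lambda_I>0$ — so Theorem \ref{L3.6} applies directly and yields that every invariant probability measure of $(I(t),U(t))$ is of the form \eqref{l3.30}. The main delicacy, and essentially the only point not reducible to pure bookkeeping, is verifying that (A.1), (A.2), (B.1), (B.2) and (B.3) genuinely partition the parameter set with $\lambda_I\neq 0$ when $\lambda_U\neq 0$; a short case-check against $\lambda_U\gtrless 0$ and the relative sizes of $\lambda_I/d_I$ and $\lambda_U/d_U$ confirms exhaustiveness, after which no further analysis beyond citing the indicated results is needed.
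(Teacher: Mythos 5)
Your proposal is correct and follows exactly the paper's own route: the paper states that Theorem \ref{Th_yhf1} is obtained by collecting the results of Section \ref{sec3}, and your case-by-case matching — (A.1) to Theorem \ref{LL1}(1), (A.2) to Theorem \ref{LL1}(2) with Corollary \ref{cor1}, (B.1) to Theorem \ref{t1} with Corollary \ref{cor3}, (B.2) to Theorem \ref{LL3.11} with Corollary \ref{cor2}, and (B.3) to Theorem \ref{L3.6} — is precisely that assembly. Your added checks that the quoted Lyapunov exponents are strictly negative and that the sub-cases exhaust the region $\lambda_I\neq 0$, $\lambda_U\neq 0$ are sound.
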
}


\section{Numerical examples}\la{sec4}


In this section, we mainly provide serval numerical examples to illustrate the effect of environment noise on long-time dynamical behaviors of infected and uninfected mosquitoes.
 For convenience to compare with deterministic model \eqref{e3}, we select the same parameter values as that in  \cite{Yu2021}, see Table \ref{table_yhf1}.
\begin{table}[h]
\caption{ The significance and  value of parameters. }
  \label{table_yhf1}
\begin{tabular}{cllllll}
\hline
 \bf{\small Parameter}&~\bf{\small Value}~&~ \bf{Biological significance of parameter}  \\
\hline
$b_I$&~~0.45~&~~the total numbers of
offspring per unit of time, per infected mosquito\\
$b_U$&$~~0.55~$&~~the total numbers of
offspring per unit of time, per wild mosquito\\
$\delta_I$&~$~0.05~$&~~the density-independent decay rate of infected mosquito\\
$\delta_U$&~~0.048~&~~the density-independent decay rate of wild mosquito\\
$d_I$&~$~0.001~$ &~~density-dependent decay rate of
  infected mosquito\\
$d_U$&~$~0.001~$ &~~density-dependent decay rate of
  wild mosquito\\
\hline
\end{tabular}
\end{table}

{Applying the truncated Euler-Maruyama method in \cite{Xiao} yields the discrete equation as
follows
\begin{equation}
\begin{cases}
\displaystyle \tilde{I}_{k+1}=I_{k}+I_{k}\Big[b_{I}-\delta_I-d_I(I_k+U_k)\Big]\Delta+\sigma_II_k\sqrt{\Delta}\zeta_k,\nn\
\\ \displaystyle \tilde{U}_{k+1}=U_k+U_{k}\Big[ \frac{b_{U} U_{k}}{I_k+U_{k}}-\delta_U-d_U(I_{k}+U_{k})\Big]\Delta+\sigma_{U}U_{k}\sqrt{\Delta}\xi_{k},
\\I_{k+1}=\Big[1\wedge\big(600+I_0+U_0\big)
\Delta^{-\frac{2}{5}}\big(\tilde{I}_{k+1}^2+\tilde{U}_{k+1}^2\big)^{-1/2}\Big]
\tilde{I}_{k+1},
\\U_{k+1}=\Big[1\wedge\big(600
+I_0+U_0\big)\Delta^{-\frac{2}{5}}
\big(\tilde{I}_{k+1}^2+\tilde{U}_{k+1}^2\big)^{-1/2}\Big]
\tilde{U}_{k+1},
\end{cases}
\end{equation}
}with initial value $(I_0,U_0)$, where $\zeta_{k}$ and $\xi_{k}~(k= 1, 2, \cdot\cdot\cdot)$ represent two independent Gaussian random variables with  mean 0 and variance 1. Let the time step size $\Delta= 10^{-4}$, we carry out a detailed numerical analysis by using Matlab programming language to support theory results  and to assess the impact of the  environment noise  on Wolbachia spread in mosquito population.

\begin{expl}[Deterministic Mosquito Model]\la{exam_yhf5.1}
Let noise intensities $\sigma_I=\sigma_U=0$.  Then  stochastic  model \eqref{e2} degenerates into  deterministic model \eqref{e3}.
In  \cite{Yu2021} Hu et al. revealed that deterministic model \eqref{e3} admits three equilibria:  two locally stable equilibria $E_1(0,502)$, $E_2(400,0)$, and a saddle point $E_3(816/11,3584/11)$, see Figure \ref{fig_yhf1}. Figure \ref{fig_yhf1} plots the vector field direction of deterministic model \eqref{e3}. One observes that there exists a black separatrix  in the first quadrant. When initial value $(I_0,U_0)$ is above this separatrix, the number of infected mosquitoes declines to zero. Conversely, {when initial value $(I_0,U_0)$ is  below this separatrix, the Wolbachia spreads to the whole  mosquito population successfully. Namely, the initial infection frequency determines whether Wolbachia  invades to  wild mosquito population successfully. We refer the reader to \cite{Yu2021} for further references.}
\begin{figure}[H]
\centering
 \setlength{\abovecaptionskip}{0.cm}
\includegraphics[width=12cm,height=5.5cm]{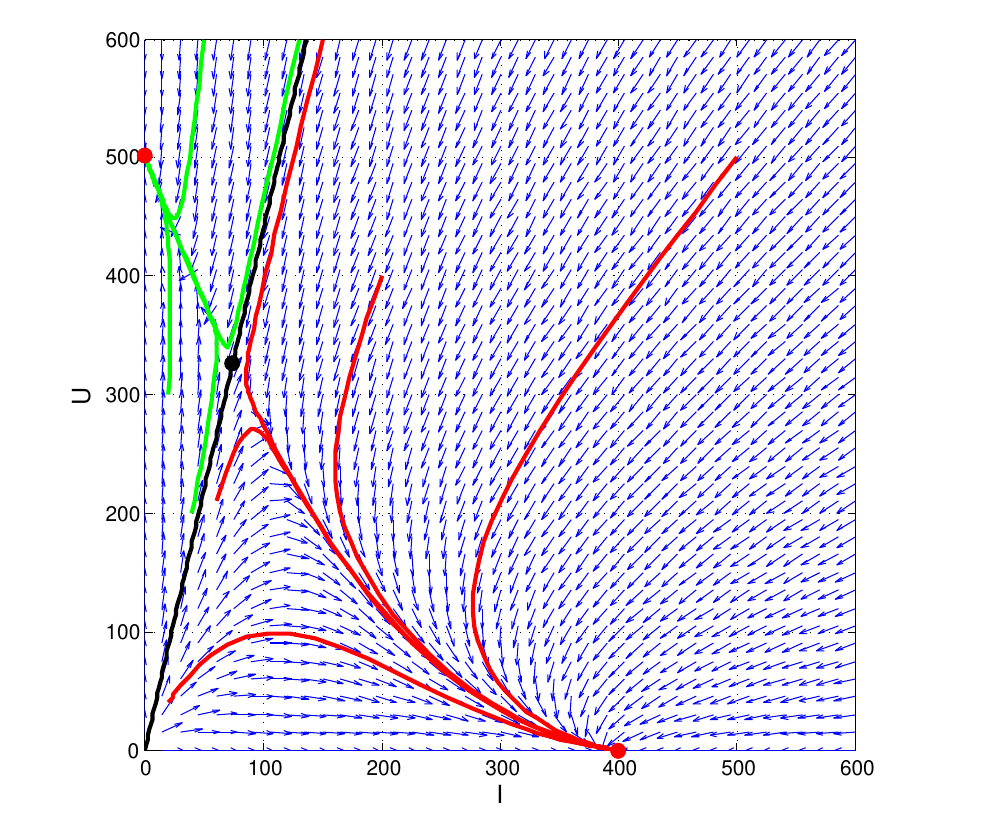}
  \caption{ 
  The vector field direction of deterministic model \eqref{e3}. The two red points $E_1$ and $E_2$ are local stable points while black point $E_3$ is a saddle point.  The black beeline passing through the black saddle point splits the first quadrant into two domains each of which contains a local stable  point. (For interpretation of the references to color in this figure legend, the reader is referred to the web version of this article.)}
  \label{fig_yhf1}
\end{figure}

Next, select the following two groups of the  initial infection frequency
  $$\mathbf{Case~1}:~\frac{I_0}{I_0+U_0}=\frac{100}{100+500};
~~~~~\mathbf{Case~2}:~\frac{I_0}{I_0+U_0}=\frac{120}{120+500}.$$
Clearly, the initial value $(100,500)$ of {\bf Case 1} is above the black separatrix. Then the
number of infected mosquitoes declines to zero, see Figure \ref{fig_yhf2}. Figure \ref{fig_yhf2}  depicts that the trajectories of $I(t)$ and $U(t)$ for {\bf Case 1}. On the other hand, the initial value $(120,500)$ of {\bf Case 2} is below the black separatrix, which implies that the Wolbachia spreads to the whole mosquito population, see Figure \ref{fig_yhf3}. Figure \ref{fig_yhf3} depicts that the  trajectories of $I(t)$ and $U(t)$ for {\bf Case 2}.

\begin{figure}[H]
  \centering
\includegraphics[width=14cm,height=4.5cm]{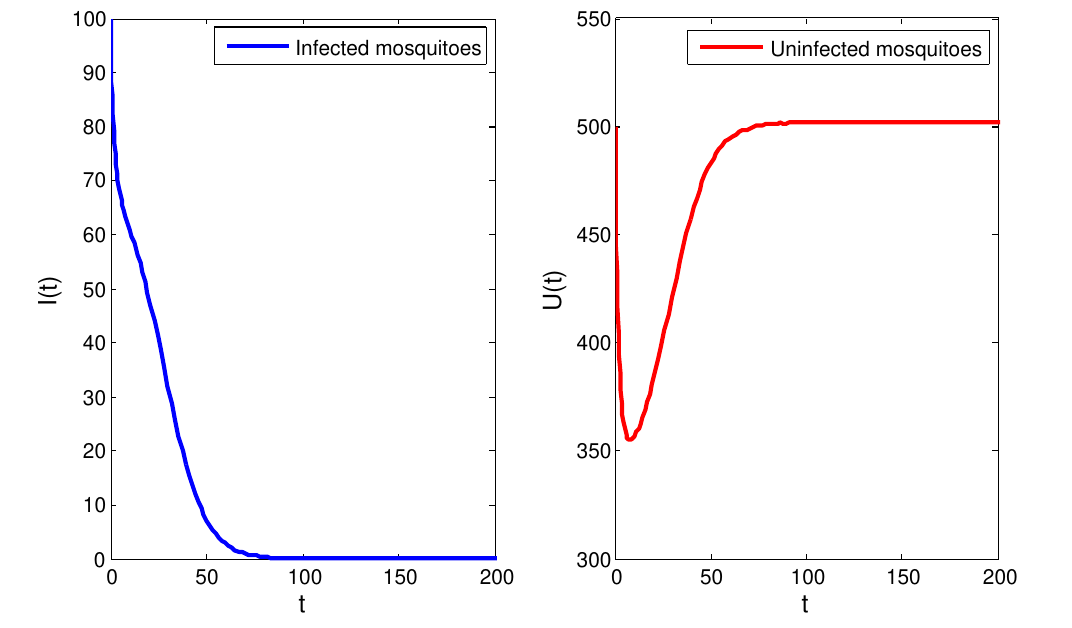}
  \caption{{\bf Case 1}. For deterministic  model \eqref{e3} with initial value $(100,500)$, the blue solid line depicts the
number  of infected mosquitoes $I(t)$; the red solid line depicts the
number  of uninfected mosquitoes $U(t)$.}
  \label{fig_yhf2}
  \vspace{-0.5em}
\end{figure}

\begin{figure}[H]
  \centering
\includegraphics[width=14cm,height=4.5cm]{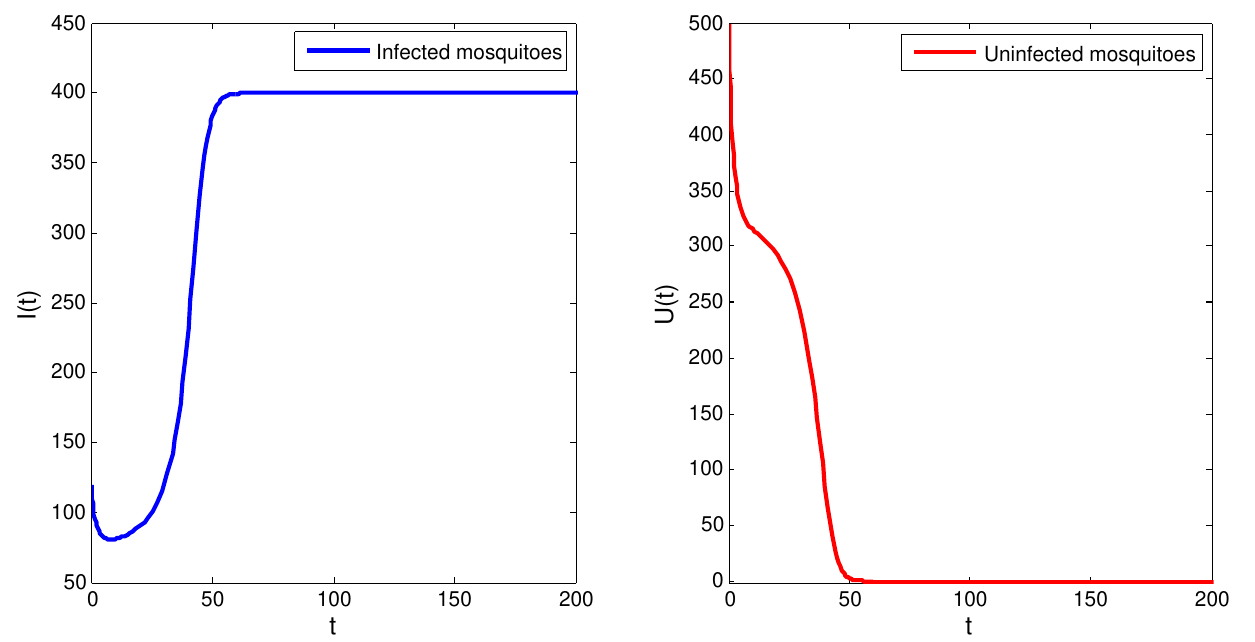}
  \caption{{\bf Case 2}. For deterministic model \eqref{e3} with initial value $(120,500)$, the blue solid line depicts the
number  of infected mosquitoes $I(t)$; the red solid line depicts the
number  of uninfected mosquitoes $U(t)$.}
  \label{fig_yhf3}
   \vspace{-1em}
\end{figure}
\end{expl}

For deterministic models \eqref{e3}, the system can be stuck in unstable points and may not converge to the stable points. However, for stochastic model \eqref{e2} derived  by adding  environment noises
to  model \eqref{e3}, the sample paths can escape from the unstable points, and concentrate
near a stable point or jump between the stable points, which leads to that model  \eqref{e2} has no curve
 analogous to the separatrix of  model \eqref{e3}. In what follows, we provide three numerical examples for stochastic model \eqref{e2} to verify our theory results.


\begin{expl}[Stochastic Mosquito Model]\la{exam_yhf5.2}

Consider model \eqref{e2} with  initial value $(100,500)$, which is  same as that in {\bf Case 1} of Example \ref{exam_yhf5.1}. 

{\bf Case 1.}  Choose   $\sigma_U=1.2$ and $\sigma_I=1$. Then
\begin{align*}
\lambda_U=-0.218<0,~~\lambda_I=-0.1<0.
\end{align*}
Theorem \ref{Th_yhf1} ${\bf (A.1)}$ tells us that
infected and uninfected mosquito populations are both extinct, see Figure \ref{fig_yhfs1}. Figure \ref{fig_yhfs1} depicts the sample paths of $I(t)$ and $U(t)$, respectively.
\begin{figure}[H]
\setlength{\abovecaptionskip}{0.cm}
\centering
\includegraphics[width=14cm,height=4.5cm]{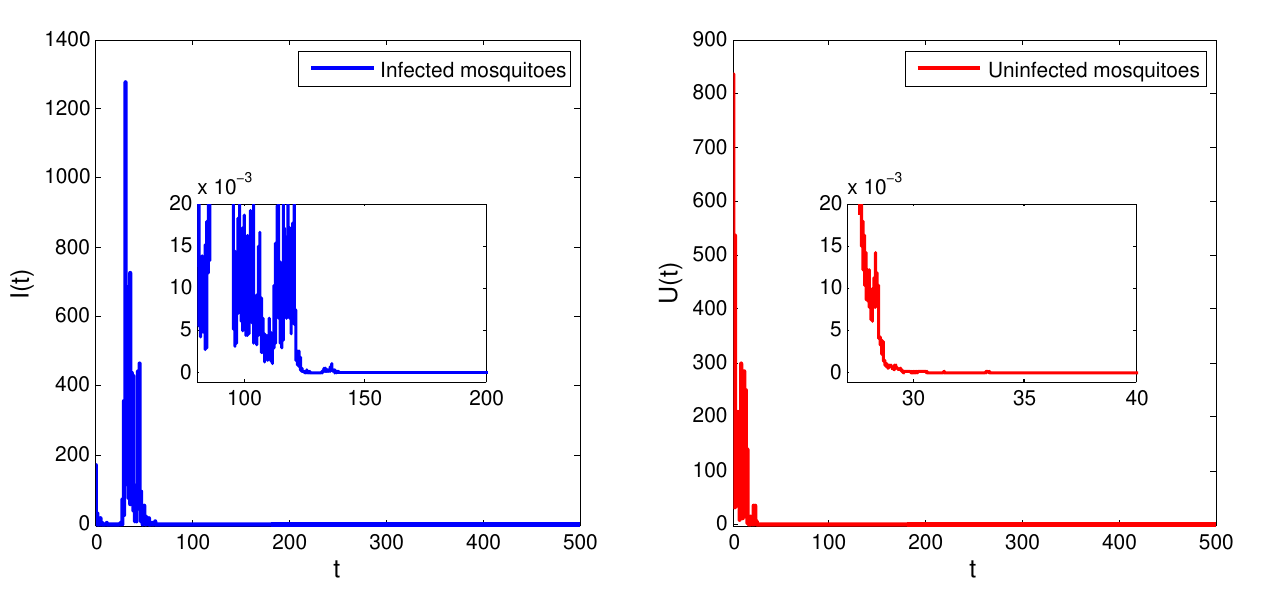}
  \caption{{\bf Case 1}.  The blue solid line depicts the
number  of infected mosquitoes $I(t)$; the red solid line depicts the
number  of uninfected mosquitoes $U(t)$.}
  \label{fig_yhfs1}
  \vspace{-0.5em}
\end{figure}

{\bf Case 2.}  Choose  $\sigma_U=1.2$ and $\sigma_I=0.2$. Then
\begin{align*}
\lambda_U=-0.218<0,~~\lambda_I=0.38>0.
\end{align*}
Theorem \ref{Th_yhf1} ${\bf (A.2)}$
reveals that  infected mosquito population is almost surely stochastically persistent and  the limit distribution of $I(t)$ is Gamma distribution $Ga(19,0.05)$, while uninfected mosquito population goes extinct exponentially fast.  Figure \ref{fig_A_2} depicts the sample paths of $I(t)$ and $U(t)$, respectively. Furthermore, using the K-S test with a significance level of $0.05$ we do confirm that the limit distribution of $I(t)$ is  $Ga(19,0.05)$  by Matlab. To make it more intuitive, we plot the density function of $Ga(19, 0.05)$ and the empirical density function of $I(t)$ in Figure \ref{fig_A_2_Fenbu}.
\begin{figure}[H]
\setlength{\abovecaptionskip}{0.cm}
  \centering
\includegraphics[width=14cm,height=4.5cm]{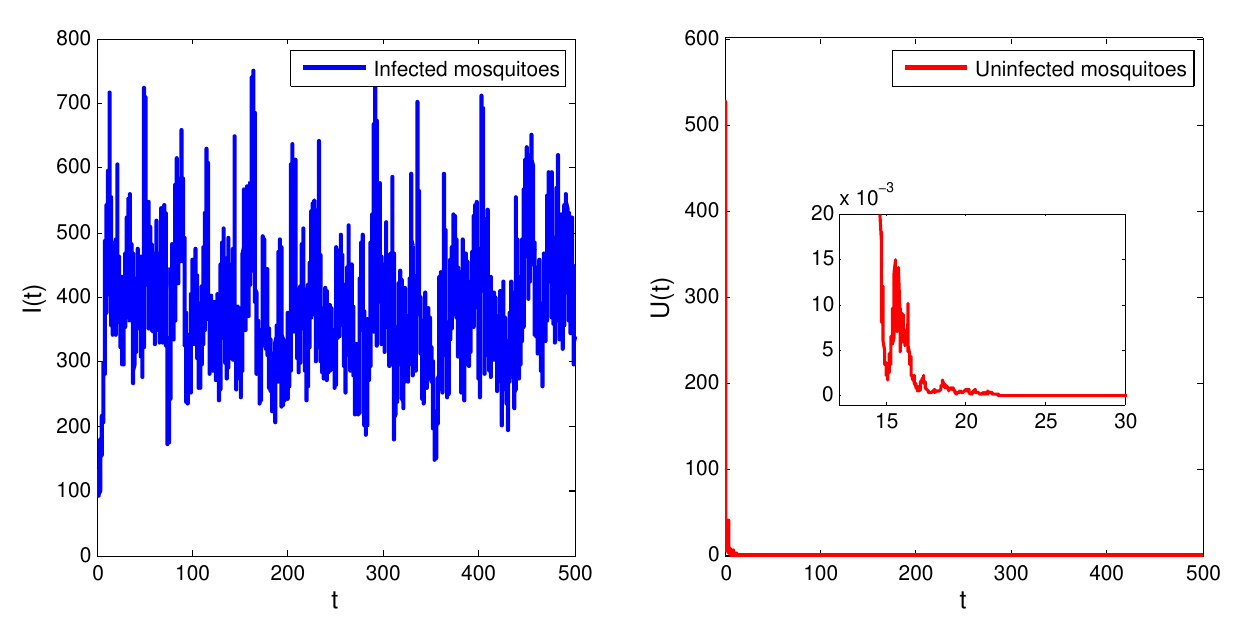}
  \caption{{\bf Case 2}. The blue solid line depicts the
number  of infected mosquitoes $I(t)$; the red solid line depicts the
number  of uninfected mosquitoes $U(t)$.}
  \label{fig_A_2}
   \vspace{-3em}
\end{figure}
\begin{figure}[H]
\setlength{\abovecaptionskip}{0.cm}
  \centering
\includegraphics[width=11cm,height=4.5cm]{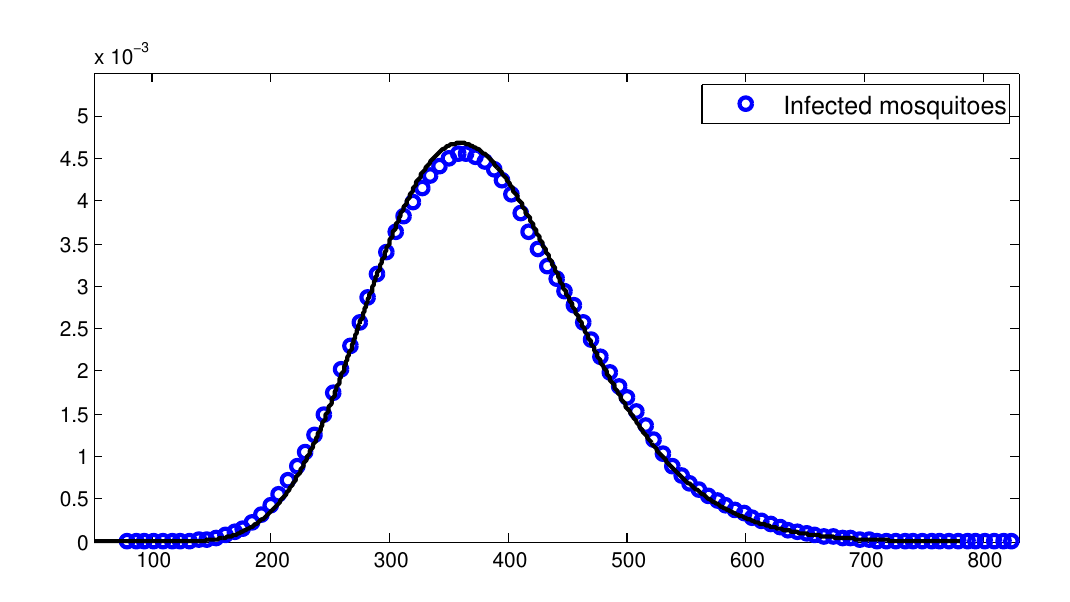}
  \caption{{\bf Case 2}. The black solid line indicates the density function of $Ga(19, 0.05)$, the blue dotted line indicates the empirical density function of  $I(t)$.}
  \label{fig_A_2_Fenbu}\vspace{-0.9em}
\end{figure}

\begin{rem}
Under the same initial value as that in {\bf Case 1} of Example \ref{exam_yhf5.1}, Example \ref{exam_yhf5.2} exhibits different completely  dynamical behaviors of stochastic model \eqref{e2} compared with those of deterministic model \eqref{e3}. In particular, for a low initial infection frequency, proper noise intensities $\sigma_I$ and $\sigma_U$ still drive a successful Wolbachia invasion into mosquito population. This implies that  environment noises can not be ignored.
\end{rem}

\end{expl}

Next, for another initial  value $(120,500)$, we go a further step to compare the dynamical behaviors of stochastic model \eqref{e2} and  deterministic  model \eqref{e3}.
\begin{expl}[Stochastic Mosquito Model]\la{exam_yhf5.3}
Consider model \eqref{e2} with  initial value $(120,500)$, which is same as that in {\bf Case 2} of Example \ref{exam_yhf5.1}.  


{\bf Case 1.}  Choose  $\sigma_U=0.5$ and $\sigma_I=0.1$. Then
\begin{align*}  \lambda_U=0.377>0,~~\lambda_I/d_I=395>377=\lambda_U/d_U.
\end{align*}
Theorem \ref{Th_yhf1} ${\bf (B.1)}$
reveals that  infected mosquito population is almost surely stochastically persistent and  the limit distribution of $I(t)$ is  $Ga(79,0.2)$, while uninfected mosquito population goes extinct exponentially fast.  Figure \ref{fig_B_1} depicts the sample paths of $I(t)$ and $U(t)$, respectively. Similarly, by Matlab we verify that the limit distribution of $I(t)$ is $Ga(79,0.2)$. In addition, we plot the density function of $Ga(79, 0.2)$ and the empirical density function of $I(t)$ in Figure \ref{fig_B_1_Fenbu}.
\begin{figure}[H]
\setlength{\abovecaptionskip}{0.cm}
  \centering
\includegraphics[width=14cm,height=4.5cm]{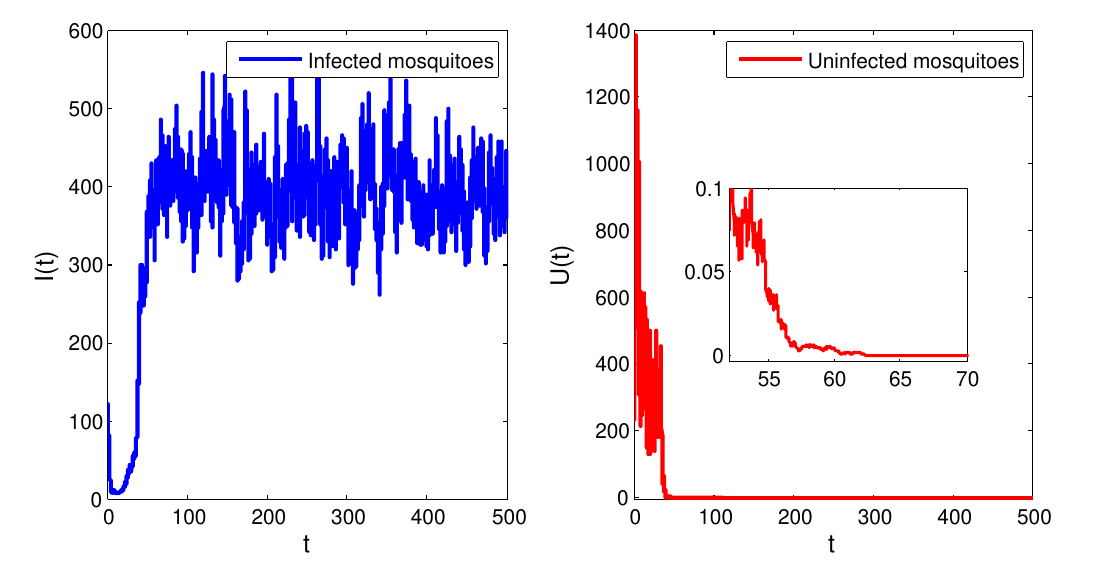}
  \caption{{\bf Case 1}. The blue solid line depicts the
number  of infected mosquitoes $I(t)$; the red solid line depicts the
number  of uninfected mosquitoes $U(t)$.}
  \label{fig_B_1}
   \vspace{-2em}
\end{figure}
\begin{figure}[H]
\setlength{\abovecaptionskip}{0.cm}
  \centering
\includegraphics[width=10cm,height=5cm]{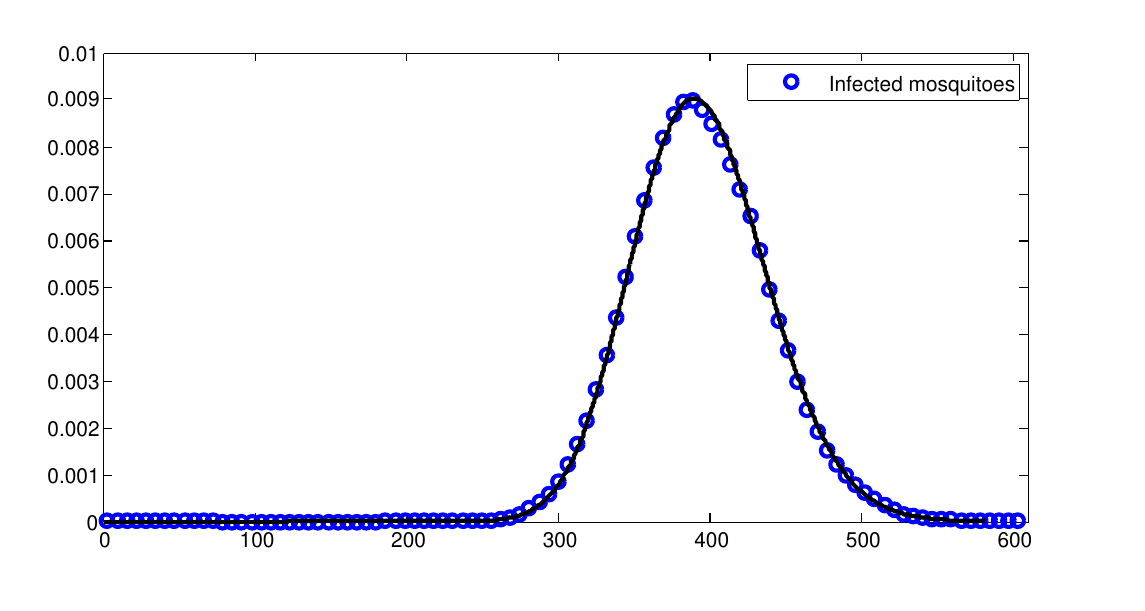}
  \caption{{\bf Case 1}. The black solid line indicates the density function of $Ga(79, 0.2)$, the blue dotted line indicates the empirical density function of  $I(t)$.}
  \label{fig_B_1_Fenbu}
\end{figure}

{\bf Case 2.} Choose   $\sigma_U=0.5$ and $\sigma_I=1.1$. Then
\begin{align*}
\lambda_U=0.377>0,~~\lambda_I=-0.205<-0.173=\lambda_U-b_U.
  \end{align*}
Theorem \ref{Th_yhf1} ${\bf (B.2)}$
shows that infected mosquito population
 goes  extinct exponentially fast and uninfected mosquito population is almost surely stochastically persistent, see Figure \ref{fig_B_2}. Figure \ref{fig_B_2} depicts the sample paths of $I(t)$ and $U(t)$, respectively.
\begin{figure}[H]
  \centering
\includegraphics[width=15cm,height=4.5cm]{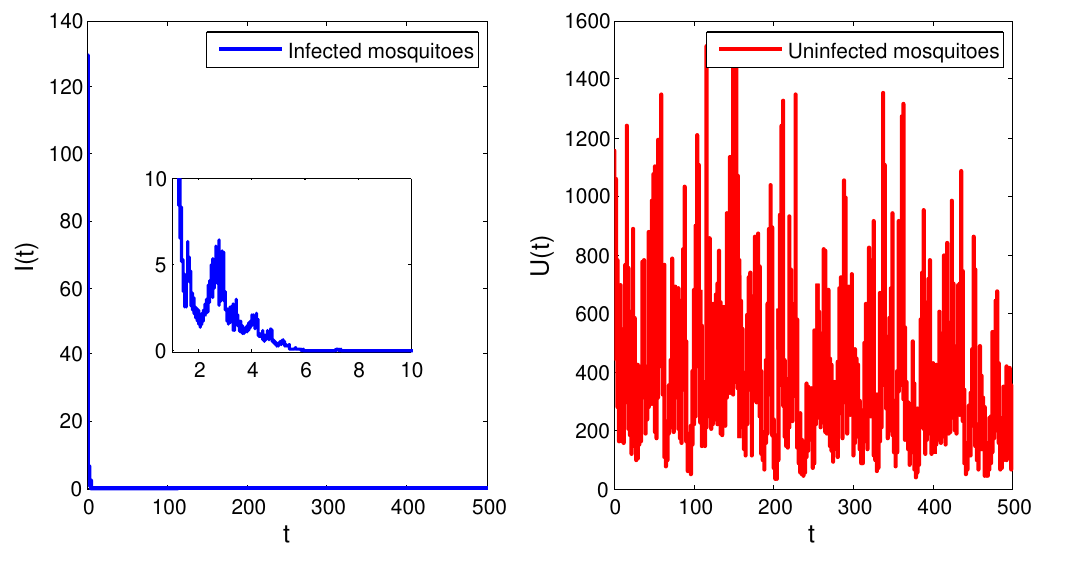}
  \caption{{\bf Case 2}.  The blue solid line depicts the
number of infected mosquitoes $I(t)$; the red solid line depicts the
number of uninfected mosquitoes $U(t)$.}
  \label{fig_B_2}
\end{figure}

{\bf Case 3.} Choose  $\sigma_U=0.5$ and $\sigma_I=0.6$. Then
\begin{align*}
\lambda_U=0.377>0,~~~\frac{\lambda_I}{d_I}=220\leq\frac{\lambda_U}{d_U}=377.
\end{align*}
Theorem \ref{Th_yhf1} ${\bf (B.3)}$  shows that any stationary distribution of  $(I(t), U(t))$ has no support on $\mathbb{R}^{2,\circ}_{+}$, see Figure \ref{fig_yhfs10}. Figure \ref{fig_yhfs10} depicts the empirical density function of $(I(t), U(t))$. Meanwhile, Theorem \ref{Th_yhf1} ${\bf (B.3)}$ implies that infected and uninfected mosquito popilations are impossible to coexist in the long term. To more intuitively exhibit this result,  we further plot  multiple sample paths of $I(t)$ and $U(t)$ in Figure \ref{fig_yhfs8}.
\begin{figure}[H]
  \centering
\includegraphics[width=8cm,height=4.5cm]{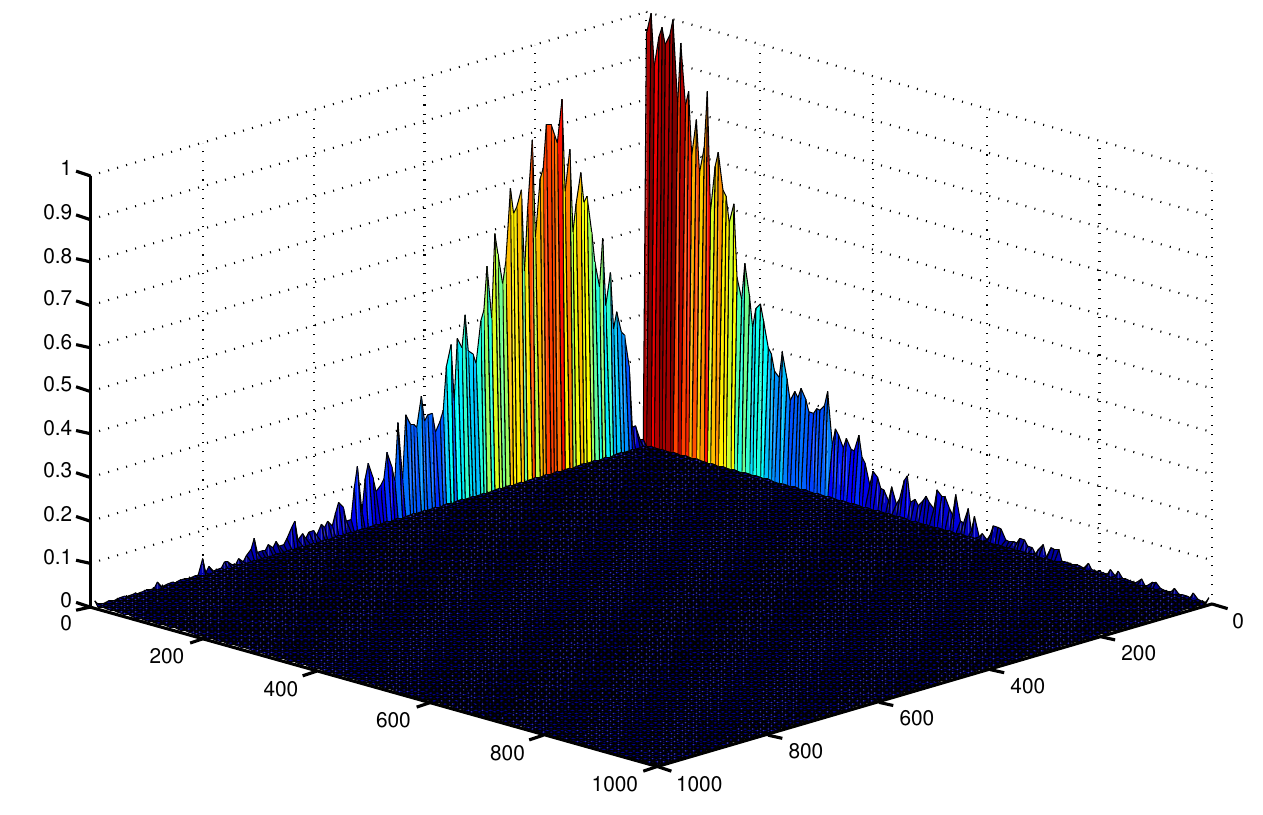}
  \caption{{\bf Case 3}. The empirical density function of  $(I(t),U(t))$. }
  \label{fig_yhfs10}
\end{figure}
\begin{figure}[H]
   \vspace{-0.5em}
  \centering
\includegraphics[width=15cm,height=6cm]{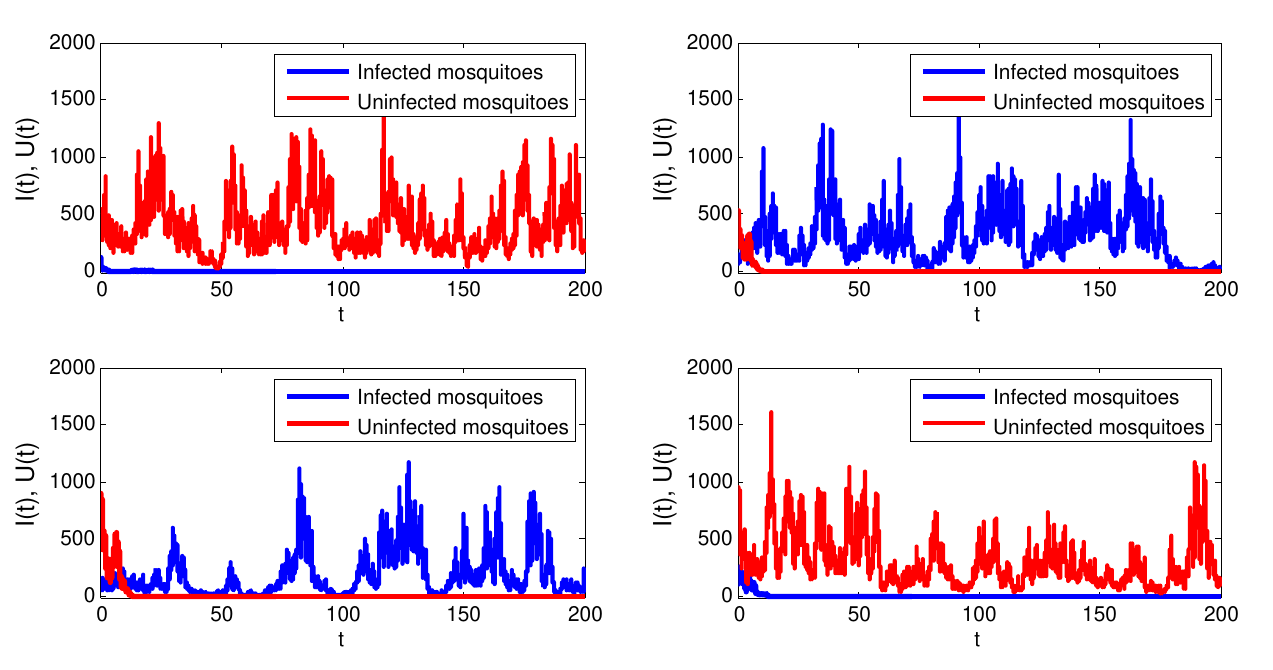}
  \caption{{\bf Case 3}. The blue solid line depicts the
number  of infected mosquitoes $I(t)$; the red solid line depicts the
number  of uninfected mosquitoes $U(t)$.}
  \label{fig_yhfs8}
\end{figure}

In addition, Theorem \ref{L3.6} gives that $\mathbb{E}(I(t)\wedge U(t))$  is declining to 0, which implies that at least one kind of mosquito population is extinct, see Figure \ref{fig_yhfs9}. Figure \ref{fig_yhfs9} depicts 4 sample paths of $I(t)\wedge U(t)$ and  the trajectory of $\mathbb{E}(I(t)\wedge U(t))$, respectively.

\begin{figure}[H]
\hspace{6ex}\raggedright
\begin{minipage}[t]{0.4\linewidth}
\setlength{\abovecaptionskip}{0pt}
\setlength{\belowcaptionskip}{10pt}
    \centering
    \includegraphics[width=7cm,height=4.5cm]{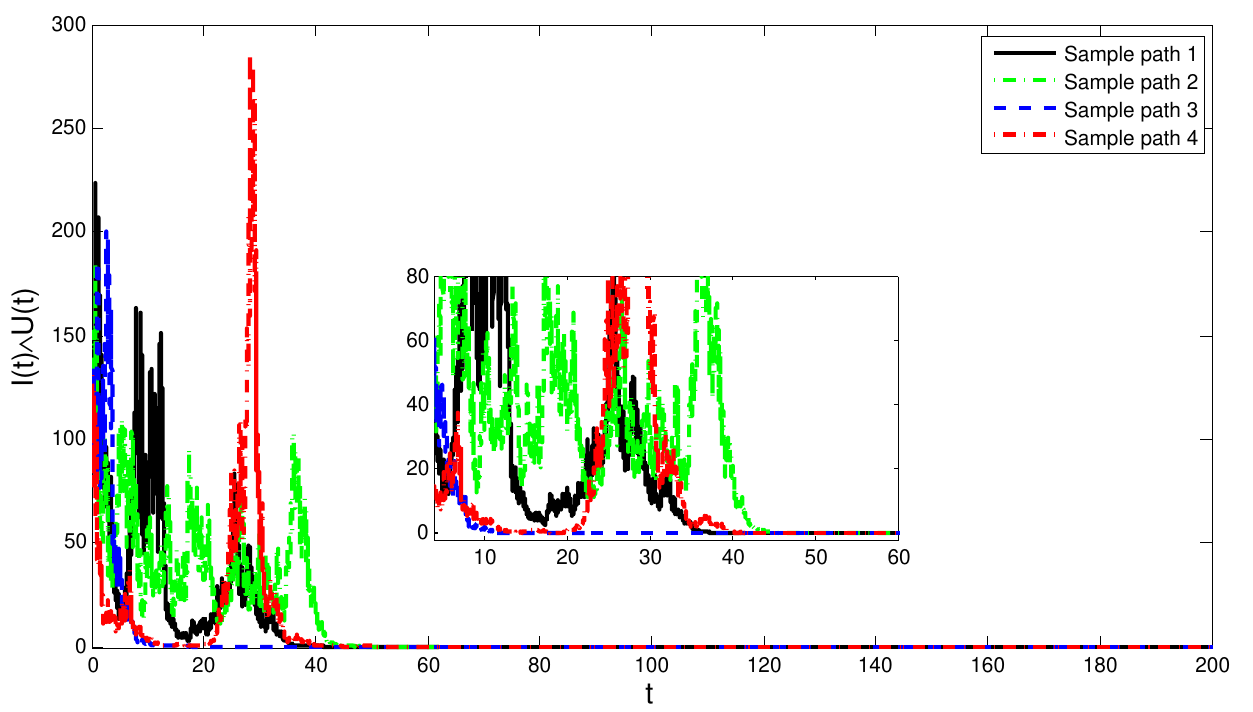}
\end{minipage}
\hspace{5ex}   
\begin{minipage}[t]{0.4\linewidth}
\setlength{\abovecaptionskip}{0pt}
\setlength{\belowcaptionskip}{10pt}
   \raggedleft
    \includegraphics[width=7cm,height=4.5cm]{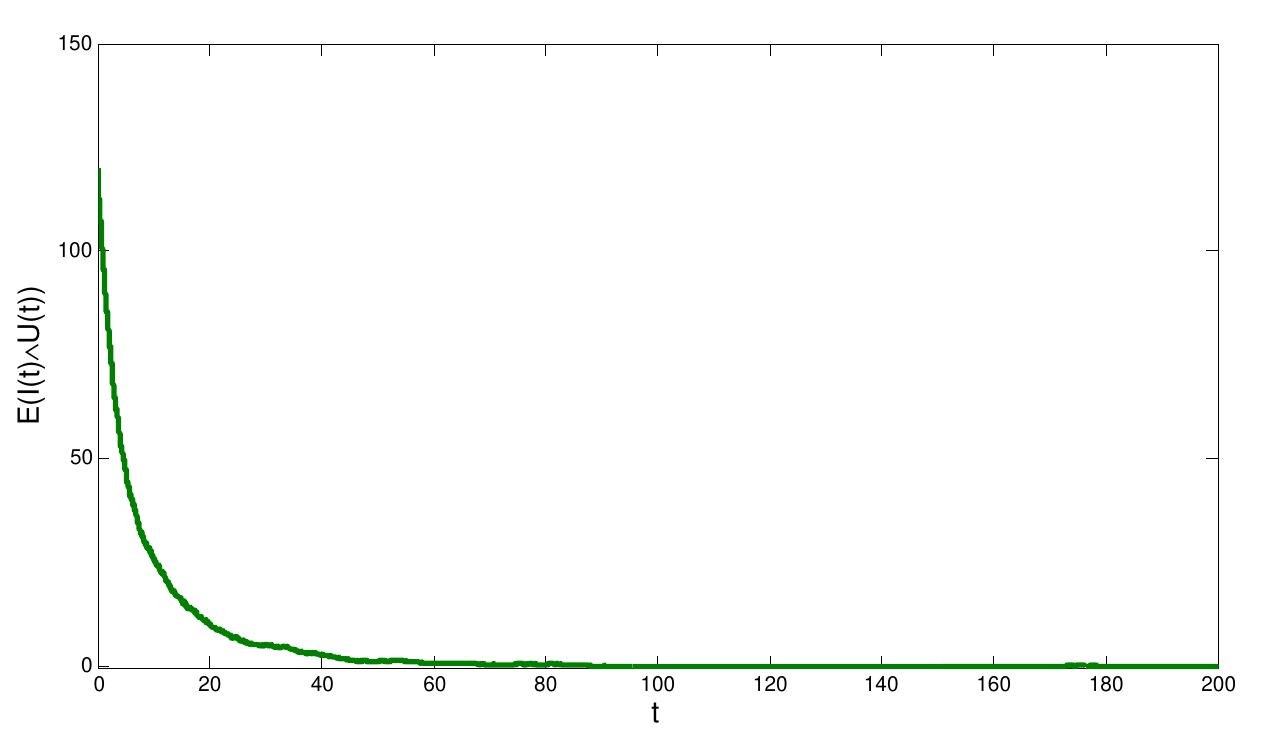}
\end{minipage}
\setlength{\abovecaptionskip}{0pt}
 \caption{{\bf Case 3}. Left: 4 sample paths of $I(t)\wedge U(t)$. Right: The trajectory of $\mathbb{E}(I(t)\wedge U(t))$.}\label{fig_yhfs9}
\end{figure}
\begin{rem}
Under the same initial value as that in {\bf Case 2} of Example \ref{exam_yhf5.1},
 {\bf Case 2} and {\bf Case 3} of Example \ref{exam_yhf5.3} also present  different completely dynamical behaviors. It is worth noting that although for a high initial infection frequency,  infected mosquito population will still be extinct if  noise intensity $\sigma_I$ is large sufficiently, see {\bf Case 2} of Example \ref{exam_yhf5.3}.
\end{rem}
\end{expl}

Quantitative dynamical features of model \eqref{e2}  are described by
  Theorem \ref{Th_yhf1}, whereas  we are currently unable to precisely determine the weights of three boundary measures theoretically in Theorem \ref{Th_yhf1} {\bf (B.3)}. To understand this intuitively, we simulate numerically the limit distributions of $(I(t),U(t))$  for different initial values $(I_0,U_0)$.
\begin{expl}[Stochastic Mosquito Model]\la{exam_yhf5.4}
Keep the noise intensities $\sigma_U$ and $\sigma_{I}$ are same as those in {\bf Case 3} of Example \ref {exam_yhf5.3}. We  plot the empirical density functions of $(I(t),U(t))$ for different initial values $(I_0,U_0)$ in Figures \ref{fig_B_3_Fenbu_13} and \ref{fig_B_3_Fenbu_14}. In addition, Figure \ref{fig_yhfs8} depicts the empirical density function of $(I(t),U(t))$ with initial value $(120,500)$ while Figure \ref{fig_B_3_Fenbu_14} (Right) depicts the empirical density function of $(I(t),U(t))$ with initial value $(12,50)$. It is evident to see that these two density pictures are very similar. One observes from Figure \ref{fig_yhfs8}, Figure \ref{fig_B_3_Fenbu_13}  and Figure \ref{fig_B_3_Fenbu_14} that the weights of boundary measures described in  Theorem \ref{Th_yhf1} {\bf (B.3)} will vary with the initial infection frequency.
\begin{figure}[H]
\hspace{6ex}\raggedright
\begin{minipage}[t]{0.4\linewidth}
\setlength{\abovecaptionskip}{0pt}
\setlength{\belowcaptionskip}{10pt}
    \centering
    \includegraphics[width=7cm,height=4.5cm]{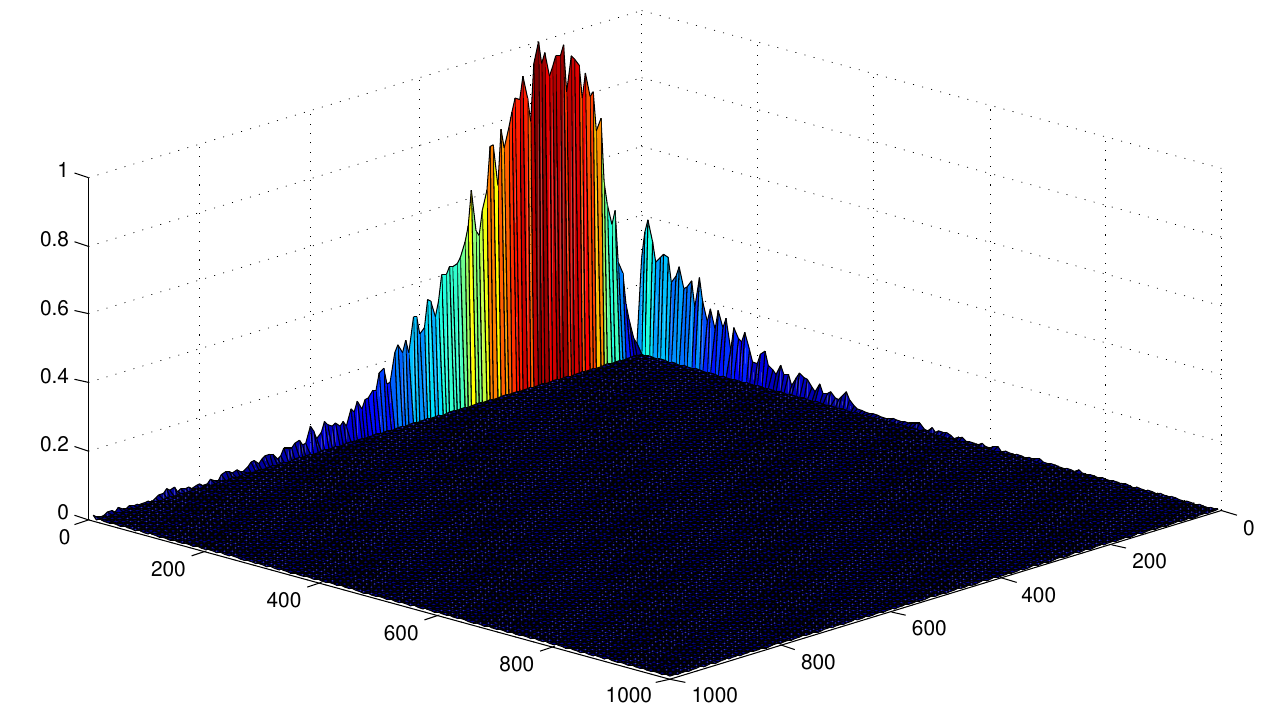}
\end{minipage}
\hspace{5ex}   
\begin{minipage}[t]{0.4\linewidth}
\setlength{\abovecaptionskip}{0pt}
\setlength{\belowcaptionskip}{10pt}
   \raggedleft
    \includegraphics[width=7cm,height=4.5cm]{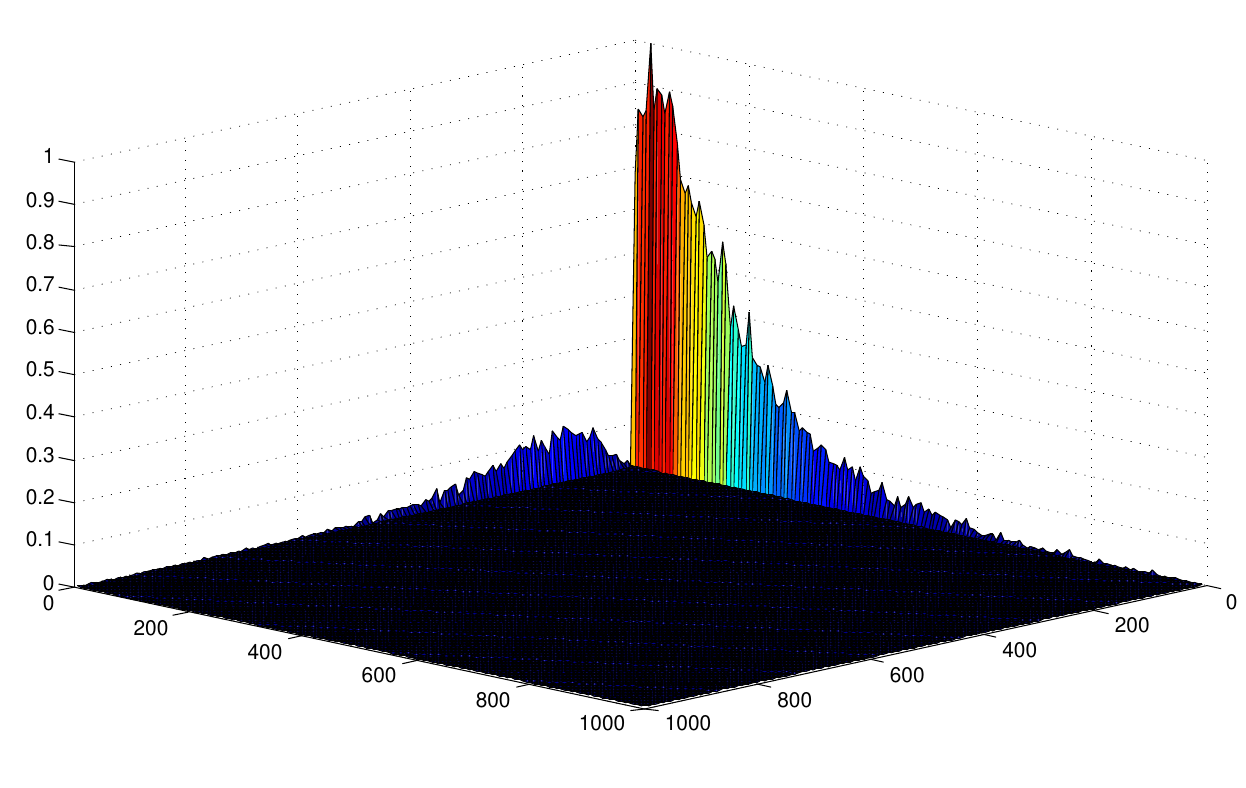}
\end{minipage}
\setlength{\abovecaptionskip}{0pt}
 \caption{{\bf Case 3}. Left: The empirical density function of  $(I(t),U(t))$ for initial value $(10,500)$. Right: The empirical density function of  $(I(t),U(t))$ for initial value $(100,50)$.}
 \label{fig_B_3_Fenbu_13}
\end{figure}

\begin{figure}[H]
\hspace{6ex}\raggedright
\begin{minipage}[t]{0.4\linewidth}
\setlength{\abovecaptionskip}{0pt}
\setlength{\belowcaptionskip}{10pt}
    \centering
    \includegraphics[width=7cm,height=4.5cm]{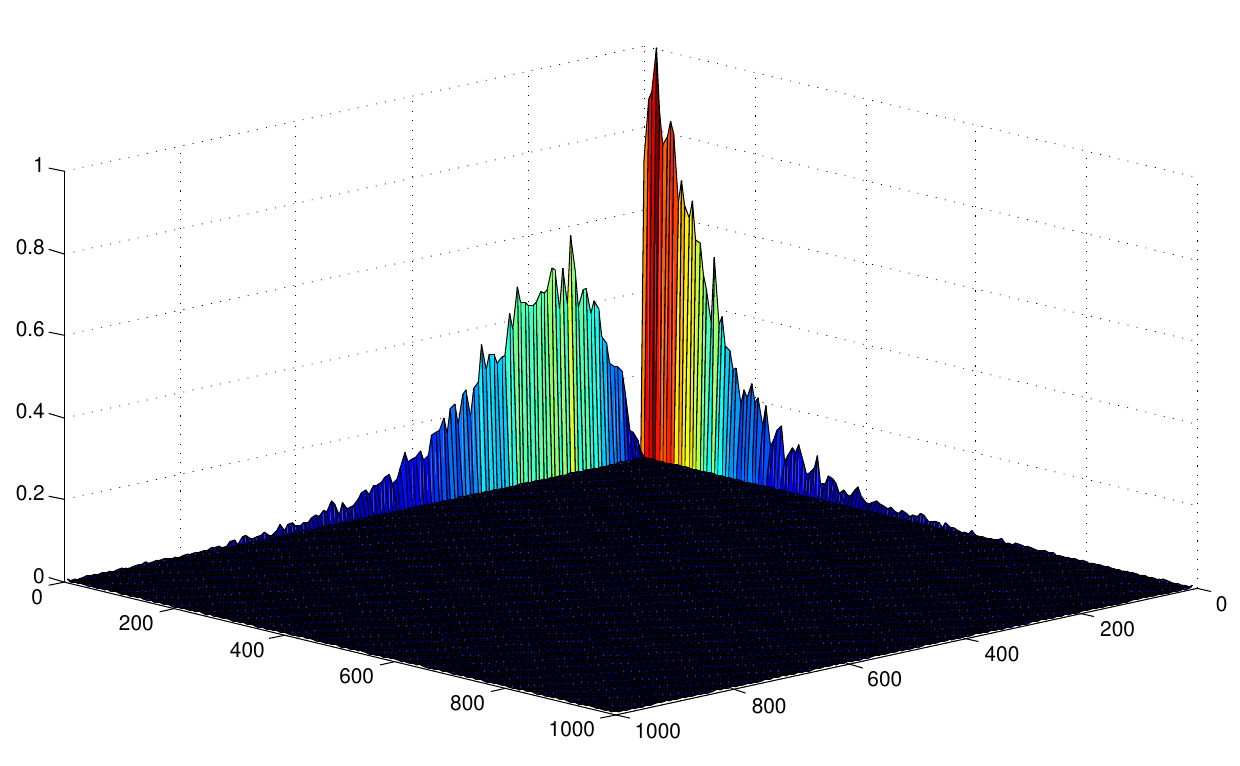}
\end{minipage}
\hspace{5ex}   
\begin{minipage}[t]{0.4\linewidth}
\setlength{\abovecaptionskip}{0pt}
\setlength{\belowcaptionskip}{10pt}
   \raggedleft
    \includegraphics[width=7cm,height=4.5cm]{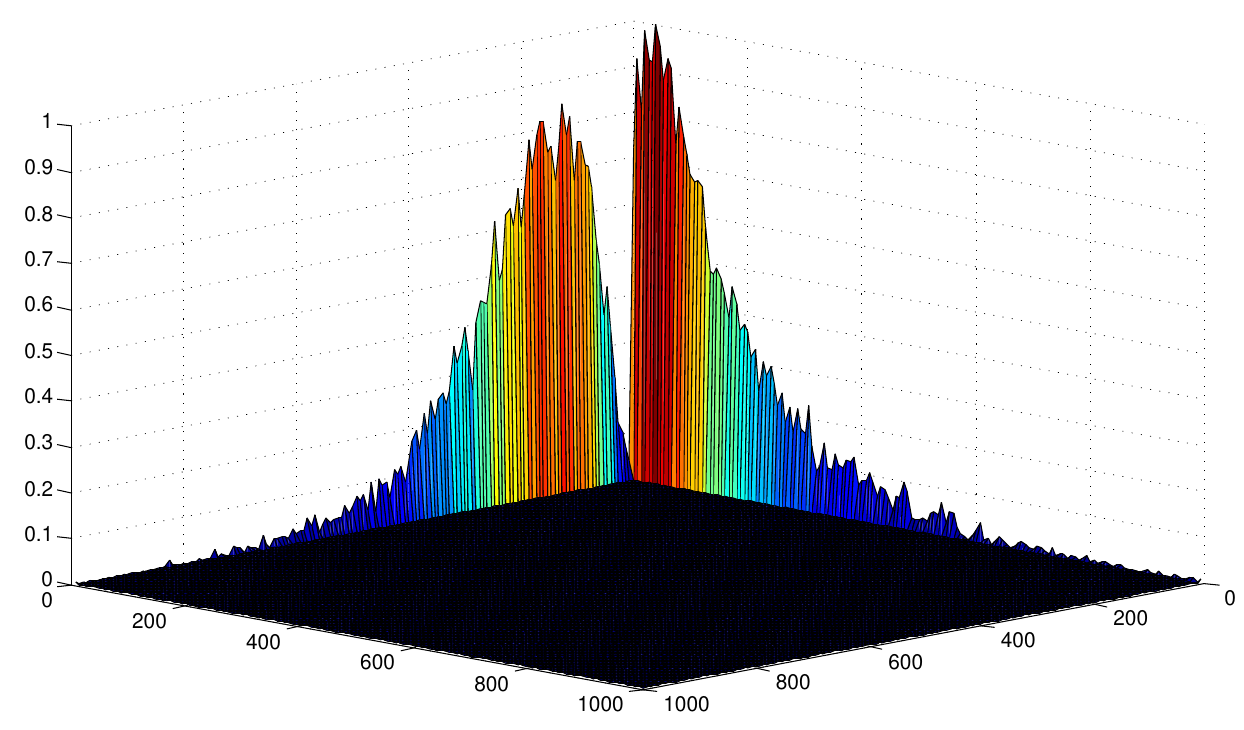}
\end{minipage}
\setlength{\abovecaptionskip}{0pt}
 \caption{{\bf Case 3}. Left: The empirical density function of  $(I(t),U(t))$ for initial value $(100,500)$. Right: The empirical density function of  $(I(t),U(t))$ for initial value $(12,50)$.}
 \label{fig_B_3_Fenbu_14}
\end{figure}
\end{expl}

\section*{Acknowledgements}

{The authors would like to thank Professor Jifa Jiang for helpful discussions and valuable comments during the preparation of this
manuscript.

Research of Xiaoyue Li was supported by the National Natural
Science Foundation of China (No. 11971096), the National Key
R\&D Program of China (2020YFA0714102), the Natural Science
Foundation of Jilin Province, China (No. YDZJ202101ZYTS154),  and the Fundamental Research Funds for the
Central Universities, China.

Xuerong Mao would like to thank the Royal Society
(WM160014, Royal Society Wolfson Research Merit Award), the
Royal Society and the Newton Fund (NA160317, Royal Society-Newton Advanced Fellowship), the Royal Society of Edinburgh (RSE1832), and Shanghai Administration of Foreign Experts Affairs (21WZ2503700, the Foreign Expert Program) for their financial support.

Hongfu Yang would like to thank the National Natural Science
Foundation of China (No. 12101144), and
the Natural Science Foundation of Guangxi Province (No. 2021GXNSFBA196080) for their financial support.
}




\end{document}